\newcommand {\zz}  { {\bf z} }
\newcommand {\bgg}  { {\bf g} }
\newcommand {\xx}  { {\bf x} }
\renewcommand {\aa}  { {\bf a} }
\newcommand {\yy}  { {\bf y} }
\newcommand {\Ex} { {\mathbb E} }
\newcommand {\qq}  { {\bf q} }
\newcommand {\pp}  { {\bf p} }
\newcommand {\vv}  { {\bf v} }
\newcommand {\bb}  { {\bf b} }
\newcommand{\hf}{\frac12}
\newcommand{\defeq}{\mathrel{\mathop:}=}
\newcommand{\defeqr}{=\mathrel{\mathop:}}
\newcounter{comment}\setcounter{comment}{0}
\def\comment{\refstepcounter{comment}\textbf{Comment \arabic{comment}: }}
\theoremstyle{break}
\newenvironment{proof}[1][Proof]{\begin{trivlist}
\item[\hskip \labelsep {\bfseries #1}]}{\end{trivlist}}
\newcommand{\qed}{\nobreak \ifvmode \relax \else
      \ifdim\lastskip<1.5em \hskip-\lastskip
      \hskip1.5em plus0em minus0.5em \fi \nobreak
      \vrule height0.75em width0.5em depth0.25em\fi}
\begin{document}

\pagestyle{plain} 
\setcounter{page}{1}
\title{Sub-Sampled Newton Methods II: Local Convergence Rates}
\author{Farbod Roosta-Khorasani\thanks{International Computer Science Institute, Berkeley, CA 94704
 and Department of Statistics, University of California at Berkeley, Berkeley, CA 94720. 
{\tt farbod/mmahoney@stat.berkeley.edu}.} \and Michael W. Mahoney\footnotemark[1]}
\maketitle
\begin{abstract}
Many data-fitting applications require the solution of an optimization problem involving a sum of large number of functions of high dimensional parameter. Here, we consider the problem of minimizing a sum of $n$ functions over a convex constraint set $\mathcal{X} \subseteq \mathbb{R}^{p}$ where both $n$ and $p$ are large. In such problems, sub-sampling as a way to reduce $n$ can offer great amount of computational efficiency.

Within the context of second order methods, we first give quantitative local convergence results for variants of Newton's method where the Hessian is uniformly sub-sampled.  Using random matrix concentration inequalities, one can sub-sample in a way that the curvature information is preserved.  Using such sub-sampling strategy, we establish locally Q-linear and Q-superlinear convergence rates. We also give additional convergence results for when the sub-sampled Hessian is regularized by modifying its spectrum or Levenberg-type regularization.

Finally, in addition to Hessian sub-sampling, we consider sub-sampling the gradient as way to further reduce the computational complexity per iteration. We use approximate matrix multiplication results from randomized numerical linear algebra (RandNLA) to obtain the
proper sampling strategy and we establish locally R-linear convergence rates. In such a setting, we also show that a very aggressive sample size increase results in a R-superlinearly convergent algorithm. 

While the sample size depends on the condition number of the problem, our convergence rates are problem-independent, i.e., they do not depend on the quantities related to the problem. Hence, our analysis here can be used to complement the results of our basic framework from the companion paper~\cite{romassn1} by exploring algorithmic trade-offs that are important in practice.


\end{abstract}

\section{Introduction}
\label{sec:intro}
Consider the optimization problem
\begin{equation}
\min _{\xx \in  \mathcal{D} \cap \mathcal{X}} F(\xx) = \frac{1}{n} \sum_{i=1}^{n} f_{i}(\xx),
\label{obj}
\end{equation}
where $\xx \in \mathbb{R}^{p}$, $\mathcal{X} \subseteq \mathbb{R}^{p}$ is a convex constraint set and $\mathcal{\mathcal{D}} = \bigcap_{i=1}^{n} \text{dom}(f_{i})$ is convex and open domain of $F$. Many data fitting applications can be expressed as~\eqref{obj} where each $f_{i}$ corresponds to an observation (or a measurement) which models the loss (or misfit) given a particular choice of the underlying parameter $\xx$. Examples of such optimization problems arise frequently in machine learning (e.g., logistic regression, support vector machines, neural networks and graphical models) and nonlinear inverse problems (e.g., PDE inverse problems). Many optimization algorithms have been developed to solve~\eqref{obj},~\cite{bertsekas1999nonlinear,nesterov2004introductory,boyd2004convex}. Here, we consider the regime where $n , p \gg 1$. In such high dimensional settings, the mere evaluation of the gradient or the Hessian can be computationally prohibitive. As a result, many of the classical \textit{deterministic} optimization algorithms might prove to be inefficient, if applicable at all. In this light, there has been a great deal of effort to design \textit{stochastic} variants which are efficient and can solve the modern ``big data'' problems without compromising much on the ``nice'' convergence behavior of the original deterministic counterpart. 

In this paper, we provide a detailed analysis of the use of sub-sampling as way to introduce randomness to the classical Newton's method and derive variants which are more suited for the modern big data problems. In doing so, we give conditions under which the local convergence properties of the full Newton's method is, to the extend possible, persevered. In particular, we will show that error recursions in all of our results exhibit a composite behavior whose dominating error term varies according to the distance of iterates to optimality. These results give a better control over various tradeoffs which exhibit themselves in different applications, e.g., practitioners  might require faster running-time while statisticians might be more interested in statistical aspects regarding the recovered solution. 
%
 
The rest of the paper is organized as follows: in Section~\ref{sec:general_background}, we first give a very brief background on the general methodology for optimizing~\eqref{obj}. The notation and the assumptions used in this paper are given in Sections~\ref{sec:notations} and~\ref{sec:assumtpions}. An overview of the main contributions of this paper is given in Section~\ref{sec:contributions}. In Section~\ref{sec:related_work}, we give a brief survey of the related work, and in their light, we discuss, in more details, the contributions of the present article. Section~\ref{sec:sub_hessian} addresses the local convergence behavior of sub-sampled Newton method in the case where only the Hessian is sub-sampled, while the gradient is used in full. Specifically, Section~\ref{sec:main_result_hess} gives such local results for when the sub-sampled Hessian is not ``altered'', whereas Section~\ref{sec:hessian_modify} establishes similar results for the cases where the sub-sampled Hessian is regularized by modifying its spectrum or Levenberg-type (henceforth called ridge-type) regularization. The case where the gradient, as well as Hessian, is sub-sampled is treated in Section~\ref{sec:subsampl_newton_hess_grad}. A few examples from generalized linear models (GLM) and support vector machine (SVM), very popular classes of problems in machine learning community, are given in Section~\ref{sec:examples}. Conclusions and further thoughts are gathered in Section~\ref{sec:conclusion}. All proofs are given in the appendix.

\subsection{General Background}
\label{sec:general_background}
For the sake of brevity and for this section only, lets assume $\mathcal{D} = \mathcal{X} = \mathbb{R}^{p}$. For optimizing~\eqref{obj}, the standard deterministic or full gradient method, which dates back to Cauchy~\cite{cauchy1847methode}, uses iterations of the form 
\begin{equation*}
\xx^{(k+1)} = \xx^{(k)} - \alpha_{k} \nabla F(\xx^{(k)}),
\end{equation*}
where $\alpha_{k}$ is the step size at iteration $k$. It is well-known,~\cite{nesterov2004introductory}, that full gradient method achieves rates of $\mathcal{O}(1/k)$ and $\mathcal{O}(\rho^{k}),\; \rho < 1$, for smooth and smooth-strongly convex objectives, respectively. However, when $n \gg 1$, the full gradient method can be inefficient because its iteration cost scales linearly in $n$. Consequently, stochastic variants of full gradient descent, e.g., (mini-batch) stochastic gradient descent (SGD) were developed~\cite{robbins1951stochastic,le2004large,li2014efficient,bertsekas1996neuro,bottou2010large,cotter2011better}. In such methods a subset $\mathcal{S} \subset \{1,2,\cdots,n\}$ is chosen at random and the update is obtained by
\begin{equation*}
\xx^{(k+1)} = \xx^{(k)} - \alpha_{k} \sum_{j \in \mathcal{S}} \nabla f_{j}(\xx^{(k)}).
\end{equation*}
When $|\mathcal{S}| \ll n$ (e.g., $|\mathcal{S}| = 1$ for simple SGD), the main advantage of these stochastic methods is that the iteration cost is independent of $n$ and can be much cheaper than the full gradient methods, making them suitable for modern problems with large $n$. However, this advantage comes at a cost: the convergence rate of these stochastic variants can be significantly slower that that of the full gradient descent. For example, under standard assumptions, it has been shown(e.g.,~\cite{nemirovski2009robust}) that, for a suitably chosen decreasing step-size sequence $\alpha_{k}$, the simple SGD iterations have an expected sub-optimality, i.e., $\mathcal{O}(1/\sqrt{k})$ and $\mathcal{O}(1/k)$ for smooth and smooth-strongly convex objectives, respectively. Since these sub-linear rates are slower than the corresponding rates for the full gradient method, great deal of efforts have been made to devise modifications to achieve the convergence rates of the full gradient methods while preserving the per-iteration cost of stochastic methods~\cite{roux2012stochastic,schmidt2013minimizing,johnson2013accelerating,shalev2013stochastic}.

The above class of methods are among what is known as \textit{first-order} methods where only the gradient information is used at every iteration. One attractive feature of such class of methods is their relatively low per-iteration-cost. However, despite such low cost, in almost all problems, incorporating curvature information (e.g., Hessian) as a form of scaling the gradient, i.e., 
\begin{equation*}
\xx^{(k+1)} = \xx^{(k)} - \alpha_{k} D_{k} \nabla F(\xx^{(k)}),
\end{equation*}
can significantly improve the convergence rate. Such class of methods which take the curvature information into account are known as \textit{second-order} methods, and compared to first-order methods, they enjoy superior convergence rate in both theory and practice. This is so since there is an implicit local \textit{scaling} of coordinates at a given $\xx \in \mathcal{D} \cap \mathcal{X}$, which is determined by the local curvature of $F$. This local curvature in fact determines the condition number of a $F$ at $\xx$. Consequently, by taking the curvature information into account (e.g., in the form of the Hessian), second order methods can rescale the gradient so it is a much more ``useful'' direction to follow. This is in contrast to first order methods which can only scale the gradient uniformly for all coordinates. Such second order information have long been used in many machine learning applications~\cite{bottou1998online,yu2010quasi,lin2008trust,martens2010deep,byrd2011use,byrd2012sample} as well as many PDE constrained inverse problems~\cite{doas12,rodoas1,rodoas2,bui2014solving,haber2000optimization}. 

It is well known,~\cite{nesterov2004introductory,boyd2004convex,nocedal2006numerical}, that the canonical example of second order methods, i.e., Newton's method, where $D_{k}$ is taken to be the inverse of the full Hessian and $\alpha_{k}=1$, i.e.,
\begin{equation*}
\xx^{(k+1)} = \xx^{(k)} - [\nabla^{2} F(\xx^{(k)})]^{-1} \nabla F(\xx^{(k)}),
\end{equation*}
converges at a quadratic rate for smooth-strongly convex objectives. Moreover, even for smooth convex functions, modifications of Newton's method has super-linear convergence. It is clear that both of these rates are much faster than those of the methods that are solely based on the gradient. It is also known that if the scaling matrix, $D_{k}$, is constructed to converge (in some sense) to the Hessian as $k \rightarrow \infty$, one can expect to obtain, asymptotically, at least a super-linear convergence rate~\cite{bertsekas1999nonlinear}. Another important property of Newton's method is \textit{scale invariance}. More precisely, for some new parametrization $\tilde{\xx} = A \xx$ for some invertible matrix A, the optimal search direction in the new coordinate system is $\tilde{\pp} = A \pp$ where $\pp$ is the original optimal search direction. By contrast, the search direction produced by gradient descent behaves in an opposite fashion as $\tilde{\pp} = A^{-T} \pp$. Such scale invariance property is important to more effectively optimize poorly scaled parameters; see~\cite{martens2010deep} for a very nice and intuitive explanation of this phenomenon. 

However when $n,p \gg 1$, the per-iteration-cost of such algorithm is significantly higher than that of first order methods. As a result, a line of research is to try to construct an approximation of the Hessian in a way that the update is computationally feasible, and yet, still provides sufficient second order information. One such class of methods are quasi-Newton methods, which are a generalization of the secant method to find the root of the first derivative for multidimensional problems. In such methods, the approximation to the Hessian is updated iteratively using only first order information from the gradients and the iterates through low-rank updates. Among these methods, the celebrated Broyden-Fletcher-Goldfarb-Shanno (BFGS) algorithm~\cite{nocedal2006numerical} and its limited memory version (L-BFGS)~\cite{nocedal1980updating,liu1989limited}, are the most popular and widely used members of this class. Another class of methods for approximating the Hessian is based on \textit{sub-sampling} where the Hessian of the full function $F$ is estimated using that of the randomly selected subset of functions $f_{i}$,~\cite{byrd2011use, byrd2012sample, erdogdu2015convergence,martens2010deep}. More precisely, a subset $\mathcal{S} \subset \{1,2,\cdots,n\}$ is chosen at random and, if the sub-sampled matrix is invertible, the update is obtained by 
\begin{subequations}
\begin{equation}
\xx^{(k+1)} = \xx^{(k)} - \alpha_{k} \big[\sum_{j \in \mathcal{S}} \nabla^{2} f_{j}(\xx^{(k)})\big]^{-1} \nabla F(\xx^{(k)}).
\label{sub_hess_unconstrained}
\end{equation}
In fact, sub-sampling can also be done for the gradient, obtaining a fully stochastic iteration
\begin{equation}
\xx^{(k+1)} = \xx^{(k)} - \alpha_{k} \big[\sum_{j \in \mathcal{S}_{H}} \nabla^{2} f_{j}(\xx^{(k)})\big]^{-1} \sum_{j \in \mathcal{S}_{\bgg}} \nabla f_{j}(\xx^{(k)}),
\label{sub_hess_grad_unconstrained}
\end{equation}
\label{sub_sampled_unconstrained}
\end{subequations}
where $\mathcal{S}_{\bgg}$ and $\mathcal{S}_{H}$ are sample sets used for approximating the Hessian and the gradient, respectively. The variants~\eqref{sub_sampled_unconstrained} are what we call \textit{sub-sampled Newton methods} in this paper. 

Unlike the sub-sampling in first order methods, theoretical properties of such techniques in the class of second-order methods are not yet very well understood. As a result, our aim in the present paper is to address the local convergence behavior of such sub-sampled Newton methods in a variety of situations (by local convergence, it is meant that the initial iterate is close enough to a local minimizer at which the sufficient conditions hold). This paper has an associated companion paper,~\cite{romassn1}, henceforth called SSN1, in which we consider globally convergent sub-sampled Newton algorithms and their convergence properties (by globally convergent algorithm, it is meant an algorithm that approaches the optimal solution starting from any initial point).  The reason for splitting into two papers is that, while SSN1~\cite{romassn1}, introduces several of the ideas in simpler settings, the more advanced techniques of this paper are needed to provide more control on the convergence behavior under a variety of assumptions.  We expect that the insight into the theoretical properties of the algorithms presented in this paper and SSN1~\cite{romassn1}, will enable the development of still-further improved sub-sampled Newton algorithms in a variety of applications in scientific computing, statistical data analysis, etc.

For the rest of this paper, we consider the general case of constrained optimization~\eqref{obj}. More specifically, given the current iterate, $\xx^{(k)} \in  \mathcal{D} \cap \mathcal{X}$, we consider the following iterative scheme, 
\begin{equation}
x_{k+1} = \arg \min _{\xx \in  \mathcal{D} \cap \mathcal{X}} \left\{ F(\xx^{(k)}) + (\xx-\xx^{(k)})^{T}\bgg(\xx^{(k)}) + \frac{1}{2 \alpha_{k}} (\xx-\xx^{(k)})^{T} H(\xx^{(k)})(\xx-\xx^{(k)})  \right\},
\label{structural_update_grad}
\end{equation}
where $\bgg(\xx^{(k)})$ and $H(\xx^{(k)})$ are some approximations to (in our case, sub-samples of) the actual gradient and the Hessian at the $k^{th}$ iteration, respectively. A variety of first and second order methods are of this form. For example, 
\begin{itemize}
    \item full Newton's method is obtained by setting $\bgg(\xx^{(k)}) = \nabla F(\xx^{(k)})$ and $H(\xx^{(k)}) = \nabla^{2}F(\xx^{(k)})$,
    \item the usual (projected) gradient descent is with the choice of $\bgg(\xx^{(k)}) = \nabla F(\xx^{(k)})$ and $H(\xx^{(k)}) = \mathbb{I}$,
		\item a step of the Frank-Wolfe,~\cite{jaggi2013revisiting}, algorithm can be obtained by considering $\bgg(\xx^{(k)}) = \nabla F(\xx^{(k)})$ and $H(\xx^{(k)}) =0$,
		\item stochastic (mini-batch) gradient descent is given by considering $\bgg(\xx^{(k)}) = 1/|\mathcal{S}_{\bgg}| \sum_{j \in \mathcal{S}_{\bgg}} \nabla f_{j}(\xx^{(k)})$ for some index set $\mathcal{S}_{g} \subseteq [n]$ and $H(\xx^{(k)}) = \mathbb{I}$,
		\item and finally, choosing the pair $\left\{ \bgg(\xx^{(k)}) = \nabla F(\xx^{(k)}), H(\xx^{(k)}) = 1/|\mathcal{S}_{H}| \sum_{j \in \mathcal{S}_{H}} \nabla^{2}f_{i}(\xx^{(k)}) \right\}$  or $\left\{\bgg(\xx^{(k)}) = 1/|\mathcal{S}_{\bgg}| \sum_{j \in \mathcal{S}_{\bgg}} \nabla f_{j}(\xx^{(k)}), H(\xx^{(k)}) = 1/|\mathcal{S}_{H}| \sum_{j \in \mathcal{S}_{H}} \nabla^{2}f_{i}(\xx^{(k)}) \right\}$ for some index sets $\mathcal{S}_{\bgg}, \mathcal{S}_{H} \subseteq [n]$ gives rise to sub-sampled Newton methods, which are the focus of this paper (as well as SSN1~\cite{romassn1}).
\end{itemize}
		

Unlike SSN1~\cite{romassn1} where our focus is mainly on designing sub-sampled Newton algorithms to guarantee global convergence, here, we concentrate on the actual speed of convergence. In particular, we aim to ensure that any such sub-sampled Newton algorithms preserves, at least locally, as much of the convergence properties of the full Newton's method as possible. In doing so, we need to ensure the following requirements:
\begin{enumerate}[(R.1)]
	\item \label{small_sample_size} Our sampling strategy needs to provide a sample size $|\mathcal{S}|$ which is independent of $n$, or at least smaller. Note that this is the same requirement as in SSN1~\cite[(R.1)]{romassn1}. However, as a result of the more intricate goals of the present paper, we will show that, comparatively, a larger sample size is required here than that used in SSN1~\cite{romassn1}.
	\item \label{preserve_spectrum} In addition, we need to ensure, at least probabilistically, that the sub-sampled matrix preserves the spectrum of the true Hessian as much as possible. If the gradient is also sub-sampled, we need to ensure that sampling is done in a way to keep as much of this first order information as possible. Note that unlike SSN1~\cite[(R.2)]{romassn1}, in order to preserve as much of the local convergence properties of the full Newton's method as possible, the mere invertibility of the sub-sampled Hessian is not enough.
	\item \label{local_rate} Finally, we need to ensure our algorithms enjoy a reasonably fast convergence rate which is, at least locally, similar to that of the full Newton's method. Note that unlike SSN1~\cite[(R.3)]{romassn1} where mere global convergence guarantee is required, here, the emphasis in on local convergence speed.
\end{enumerate}
In this paper, we address challenges~\hyperref[small_sample_size]{(R.1)},~\hyperref[preserve_spectrum]{(R.2)}, and~\hyperref[local_rate]{(R.3)}. More precisely, by using a random matrix concentration inequality as well as results from approximate matrix multiplication of randomized numerical linear algebra (RandNLA), we ensure~\hyperref[small_sample_size]{(R.1)} and~\hyperref[preserve_spectrum]{(R.2)}. To address~\hyperref[local_rate]{(R.3)}, we give algorithms whose local convergence rates can be made close to that of full Newton's method. These local rates coupled with the global convergence guarantees of SSN1~\cite{romassn1}, provide globally convergent  algorithms with \textit{fast} local rates (e.g., see SSN1~\cite[Theorems 2 and 7]{romassn1}). 
The present paper and the companion, SSN1~\cite{romassn1}, to the best of our knowledge, are the very first to thoroughly and quantitatively study the convergence behavior of such sub-sampled second order algorithms, in a variety of settings.

\subsection{Notation}
\label{sec:notations}
Throughout the paper, vectors are denoted by bold lowercase letters, e.g., $\vv$, and matrices or random variables are denoted by regular upper case letters, e.g., $V$, which is clear from the context. For a vector $\vv$, and a matrix $V$, $\|\vv\|$ and $\|V\|$ denote the vector $\ell_{2}$ norm and the matrix spectral norm, respectively, while $\|V\|_{F}$ is the matrix Frobenius norm. $\nabla f(\xx)$ and $\nabla^{2} f(\xx)$ are the gradient and the Hessian of $f$ at $\xx$, respectively. For two symmetric matrices $A$ and $B$, $A \succeq B$ indicates that $A-B$ is symmetric positive semi-definite. The superscript, e.g., $\xx^{(k)}$, denotes iteration counter and $\ln(x)$ denotes the natural logarithm of $x$. Throughout the paper, $\mathcal{S}$ denotes a collection of indices from $\{1,2,\cdots,n\}$, with potentially repeated items and its cardinality is denoted by $|\mathcal{S}|$. The tangent cone of the constraints at a (local) optimum $\xx^{*}$ is denoted by
\begin{equation}
\mathcal{K} \defeq \Big\{\pp \in \mathbb{R}^{p}; \; \exists t > 0 \text{ s.t. } \xx^{*} + t \pp \in \mathcal{D} \cap \mathcal{X} \Big\}.
\label{cone}
\end{equation}
For a vector $\vv$ and a matrix A, using such cone, we can define their $\mathcal{K}$-restricted norms, respectively, as 
\begin{subequations}
\label{norm_cone}
\begin{align}
\| \vv \|_{\mathcal{K}} &\defeq \max_{\pp \in \mathcal{K} \setminus \{0\}} \frac{|\pp^{T} \vv|}{\|\pp\|},
\label{norm_cone_vec} \\
\| A \|_{\mathcal{K}} &\defeq \max_{\pp, \qq \in \mathcal{K}\setminus \{0\}} \frac{|\pp^{T} A \qq|}{\|\pp\|\|\qq\|}.
\label{norm_cone_mat}
\end{align}
\end{subequations}
Similarly, one can define the $\mathcal{K}$-restricted maximum and the minimum eigenvalues of a symmetric matrix $A$ as
\begin{subequations}
\begin{align}
\lambda_{\min}^{\mathcal{K}}(A) \defeq \min_{\pp \in \mathcal{K} \setminus \{0\} } \frac{\pp^{T} A \pp}{\|\pp\|^{2}}, \label{lambda_min_cone}\\
\lambda_{\max}^{\mathcal{K}}(A) \defeq \max_{\pp \in \mathcal{K} \setminus \{0\} } \frac{\pp^{T} A \pp}{\|\pp\|^{2}}. \label{lambda_max_cone}
\end{align}
\label{lambda_cone}
\end{subequations}
Alternatively, let $U$ be an orthonormal basis for the cone $\mathcal{K}$. The definitions above are equivalent to the following:
\begin{align*}
\| \vv \|_{\mathcal{K}} &= \|U^{T} \vv\|, \\
\| A \|_{\mathcal{K}} &= \| U^{T} A U\|,  \\
\lambda_{\min}^{\mathcal{K}}(A) &= \lambda_{\min}(U^{T} A U),\\
\lambda_{\max}^{\mathcal{K}}(A) &= \lambda_{\max}(U^{T} A U),
\end{align*}
where $\lambda_{\max}(A)$ and $\lambda_{\min}(A)$ are, respectively, the usual maximum and minimum eigenvalues of $A$, i.e., computed with respect to all vectors in $\mathbb{R}^{p}$. This representation allows us to define any $\mathcal{K}$-restricted eigenvalue of $A$ as
\begin{equation}
\lambda_{i}^{\mathcal{K}}(A) = \lambda_{i} (U^{T} A U).
\label{lambda_i_cone}
\end{equation}

Throughout this paper we make use of two standard definitions of convergence rate: \textit{Q-convergence rate} and \textit{R-convergence rate}. Recall that a sequence of vectors $\{\zz^{(k)}\}_{k}$ is said to converge Q-linearly to a limiting value $\zz^{*}$, if for some $0 \leq \rho < 1$, 
\begin{equation*}
\lim \sup_{k} \frac{\|\zz^{(k+1)} - \zz^{*}\|}{\|\zz^{(k)} - \zz^{*}\|} = \rho.
\end{equation*}
Q-superlinear convergence is defined similarly as 
\begin{equation*}
\lim \sup_{k} \frac{\|\zz^{(k+1)} - \zz^{*}\|}{\|\zz^{(k)} - \zz^{*}\|}= 0.
\end{equation*}
The notion of R-convergence rate is an extension which captures sequences which still converge reasonably fast, but whose ``speed'' is variable. A sequence of vectors $\{\zz^{(k)}\}_{k}$ is said to converge R-superlinearly to a limiting value $\zz^{*}$, if $$\|\zz^{(k)} - \zz^{*}\| \leq R r^{(k)},$$ for some $R > 0$ and a sequence $\{r^{(k)}\}_{k}$ such that $$\lim\sup_{k} \frac{r^{(k+1)}}{r^{(k)}} = \rho < 1.$$
R-superlinear convergence is similarly defined by requiring that $$\lim\sup_{k} \frac{r^{(k+1)}}{r^{(k)}} = 0.$$

\subsection{Assumptions}
\label{sec:assumtpions}
\begin{table}[htb]
\centering
 \begin{tabular}{||c || c || c || c ||} 
 \hline
 \multicolumn{4}{|c|}{Assumptions for Convergence of Algorithms} \\
 \hline
 Algorithm & Hessian Lipschitz & Global Regularity & Local Regularity \\ [0.5ex] 
 \hline\hline
 \ref{alg1} & \eqref{F_Lip} & \eqref{strong_convex_boundedness} & ---  \\ 
 \ref{alg1} & \eqref{F_Lip} & --- & \eqref{strong_convex_boundedness_opt}  \\ 
 \ref{alg1_superlinear} & \eqref{F_Lip} & \eqref{strong_convex_boundedness} & --- \\
\ref{alg1_superlinear} & \eqref{F_Lip} & & \eqref{strong_convex_boundedness_opt}   \\
 \ref{alg_spectral} & \eqref{F_Lip_spectral} & \eqref{smoothness_spectral},~\eqref{strong_conv_spectral} &  ---\\
\ref{alg_spectral} & \eqref{F_Lip_spectral} & --- & \eqref{smoothness_spectral_opt},~\eqref{strong_conv_spectral_opt}  \\
 \ref{alg_ridge} & \eqref{F_Lip} & \eqref{strong_convex_boundedness} & --- \\
\ref{alg_ridge} & \eqref{F_Lip} &  & \eqref{strong_convex_boundedness_opt}   \\
 \ref{alg_hessian_grad} & \eqref{F_Lip} & \eqref{strong_convex_boundedness} &  ---\\
\ref{alg_hessian_grad} & \eqref{F_Lip} &  --- & \eqref{strong_convex_boundedness_opt}  \\
 \ref{alg_simult_hessian_grad} & \eqref{F_Lip} & --- & \eqref{strong_convex_boundedness_opt},~\eqref{grad_boundedness_opt}\\ [1ex] 
 \hline
 \end{tabular}
\caption{Summary of the assumptions used for convergence of different algorithms. Global regularity refers to smoothness and strong convexity of $F$ for $\forall \xx \in  \mathcal{D} \cap \mathcal{X}$, whereas local regularity refers to such properties for $F$ but only at a local optimum $\xx^{*}$. For each algorithm, we give separate convergence results under the global regularity as well as only the local regularity.  
\label{table_assumptions}}
\end{table}

Our theoretical analysis is based on the following assumptions. 
\begin{enumerate}
	\item \textbf{Lipschitz Hessian:} Throughout this article, we assume that each $f_{i}$ is twice-differentiable and has a Lipschitz continuous Hessian with respect to the cone $\mathcal{K}$, i.e., for some $L > 0$,
\begin{equation}
\|\nabla^{2} f_{i}(\xx) - \nabla^{2} f_{i}\big(\yy) \|_{\mathcal{K}} \leq L \|\xx - \yy\|, \;\text{ s.t. } \xx-\yy \in \mathcal{K} , \; i=1,2,\ldots,n,
\label{F_Lip}
\end{equation}
where $\| A \|_{\mathcal{K}}$ is defined in~\eqref{norm_cone_mat}.
	\item \textbf{Hessian Regularity:} Regularity of the Hessian refers to smoothness and strong convexity of $F$. Such properties can either be required for $\forall \xx \in \mathcal{D} \cap \mathcal{X}$ or alternatively, only at a local minimum, $\xx^{*}$,  which is assumed to always exist. In this paper, these properties are referred to as \textit{global} and \textit{local} regularity, respectively. In particular, for a given $\xx \in \mathcal{D} \cap \mathcal{X}$, let $K(\xx)$ and $\gamma(\xx)$ be such that
	\begin{subequations}
\begin{align}
\|\nabla^{2} f_{i}(\xx) \|_{\mathcal{K}} &\leq K(\xx), \quad i=1,2,\ldots,n, \label{boundedness_x}\\
\lambda_{\min}^{\mathcal{K}} \left ( \nabla^{2} F(\xx) \right) &\geq \gamma(\xx) \label{F_strong_x},
\end{align}
\label{strong_convex_boundedness_x}
\end{subequations}
where $\|A\|_{\mathcal{K}}$ and $\lambda_{\min}^{\mathcal{K}} (A)$ are defined in~\eqref{norm_cone_mat} and~\eqref{lambda_min_cone}, respectively.
We make the following regularity assumptions:
	\begin{enumerate}
		\item \textbf{Global Hessian Regularity:} Throughout Sections~\ref{sec:sub_hessian} and~\ref{sec:subsampl_newton_hess_grad}, every convergence result is first given for the case where we have global smoothness and strong convexity with respect to $\mathcal{K}$. More precisely, we assume that 
\begin{subequations}
\begin{align}
\sup_{\xx \in  \mathcal{D} \cap \mathcal{X}} K(\xx) &\defeqr  K < \infty, \label{boundedness}\\
 \inf_{\xx \in  \mathcal{D} \cap \mathcal{X}} \gamma(\xx) &\defeqr \gamma > 0, \label{F_strong}
\end{align}
\label{strong_convex_boundedness}
\end{subequations}
where $K(\xx)$ and $\gamma(\xx)$ are defined in~\eqref{strong_convex_boundedness_x}. Note that since $\mathcal{D} \cap \mathcal{X}$ is convex, Assumption~\eqref{F_strong} implies strong convexity of $F$ which, in turn, implies the \textit{uniqueness} of $\xx^{*}$. 

\item \textbf{Local Hessian Regularity:} Assumptions~\eqref{strong_convex_boundedness} are rather restrictive and limit the class of problems where the methods in the present paper apply. Fortunately, these assumptions can be further relaxed and be required to hold \textit{only} locally at a local optimum $\xx^{*}$. In other words, we require that   
\begin{subequations}
\begin{align}
K(\xx^{*}) &\defeqr K^{*} < \infty, \label{boundedness_opt}\\
\gamma(\xx^{*}) &\defeqr \gamma^{*} > 0, \label{F_strong_opt}
\end{align}
\label{strong_convex_boundedness_opt}
\end{subequations}
where $K(\xx^{*})$ and $\gamma(\xx^{*})$ are defined in~\eqref{strong_convex_boundedness_x}. Note that since $\mathcal{D} \cap \mathcal{X}$ is convex, Assumption~\eqref{F_strong_opt} imply that $\xx^{*}$ is an \textit{isolated} local optimum. These assumptions are relaxations of~\eqref{strong_convex_boundedness}. For example,~\eqref{F_strong_opt} appears as a second order sufficiency condition for a local optimum $\xx^{*}$.  In fact, some highly non-convex problems with multiple local minima exhibit such local structures, e.g.,~\cite{shamir2015fast}. As a result, each analysis in Sections~\ref{sec:sub_hessian} and~\ref{sec:subsampl_newton_hess_grad} is complemented with a convergence results using Assumptions~\eqref{strong_convex_boundedness_opt}, to make them applicable in a more general setting.

\end{enumerate}

\item \textbf{Locally Bounded Gradient:} In Section~\ref{sec:subsampl_newton_hess_grad}, in addition to the Hessian, the gradient is also sub-sampled. For some of the results presented there, we give convergence results using the following regularity assumptions on the gradients in the form of boundedness at a local optimum $\xx^{*}$:
\begin{equation}
\|\nabla f_{i}(\xx^{*})\|_{\mathcal{K}} \leq G^{*} < \infty, \quad i =1,2,\ldots,n.
\label{grad_boundedness_opt}
\end{equation} 
\end{enumerate}

For the results of Section~\ref{sec:spectral_reg} only, due to technical reasons which will be explained there, we need to remove $\mathcal{K}$-constrained condition from the above assumptions. More specifically,~\eqref{F_Lip} will be replaced by
\begin{subequations}
\begin{equation}
\|\nabla^{2} f_{i}(\xx) - \nabla^{2} f_{i}\big(\xx^{*}) \| \leq L \|\xx - \xx^{*}\|, \; \forall \xx  \in \mathcal{D} \cap \mathcal{X} , \; i=1,2,\ldots,n,
\label{F_Lip_spectral}
\end{equation}
and instead of~\eqref{strong_convex_boundedness}, we require that (for notational simplicity, we overload the constants $K$, $\gamma$, $K^{*}$, and $\gamma^{*}$)
\begin{align}
\sup_{\xx \in \mathcal{D} \cap \mathcal{X}} \|\nabla^{2}f_{i}(\xx) \| &\defeqr K < \infty, \quad i=1,2,\ldots,n, \label{smoothness_spectral} \\
\inf_{\xx \in \mathcal{D} \cap \mathcal{X}} \lambda_{\min} \left( F(\xx) \right) &\defeqr \gamma > 0, \label{strong_conv_spectral}
\end{align}
where $\|A\|$ and $\lambda_{\min}(A)$ are, respectively, the usual spectral norm and minimum eigenvalues of $A$, i.e., computed with respect to all vectors in $\mathbb{R}^{p}$. Assumption~\eqref{strong_convex_boundedness_opt} will similarly be replaced with 
\begin{align}
\| \nabla^{2}f_{i}(\xx^{*}) \| &\defeqr K^{*} < \infty, \quad i=1,2,\ldots,n, \label{smoothness_spectral_opt} \\
\lambda_{\min} \left( \nabla^{2}F(\xx^{*}) \right) &\defeqr \gamma^{*} > 0, \label{strong_conv_spectral_opt}
\end{align}
where $\xx^{*}$ is a local optimum. Note that in the unconstrained case or when $\mathcal{X}$ is open, since $\mathcal{K} = \mathbb{R}^{p}$, Assumptions~\eqref{spectral_assumtpions} and the ones restricted to the cone, $\mathcal{K}$, all coincide.
\label{spectral_assumtpions}
\end{subequations}

Table~\ref{table_assumptions} gives an overview of where each of the above assumptions are used to prove convergence of different algorithms presented in this paper. As noted before, for each algorithm, we first give convergence results using the global regularity assumptions, and then we provide separate convergence results by relaxing these assumptions to hold locally only at $\xx^{*}$.
  
\subsection{Contributions}
\label{sec:contributions}
\begin{table}[htb]
\centering
 \begin{tabular}{||c || c || c || c || c ||} 
 \hline
 \multicolumn{5}{|c|}{Summary of Results} \\
 \hline
 Theorem & Algorithm & Sub-Samp. & Local Conv. Rate & Hessian Reg.\\ [0.5ex] 
 \hline\hline
 \ref{uniform_newton_sufficient_cond} & \ref{alg1} & H & Q-Linear & --- \\ 
 \ref{uniform_newton_sufficient_cond_2},~\ref{uniform_newton_sufficient_cond_3} & \ref{alg1_superlinear} & H & Q-Superlinear & --- \\
 \ref{spectral_hessian_suff},~\ref{spectral_hessian_suff_relaxed}   &\ref{alg_spectral} & H & Q-Linear & Spectral \\
 \ref{ridge_hessian_suff} &\ref{alg_ridge} & H & Q-Linear & Ridge \\
 \ref{uniform_newton_grad_sufficient_cond} &\ref{alg_hessian_grad} & H \& G (indep.) & R-Linear & --- \\
 \ref{uniform_newton_grad_sufficient_cond_relax_2},~\ref{uniform_newton_grad_sufficient_cond_relax_3} &\ref{alg_simult_hessian_grad} & H \& G (simult.) & R-Linear \& R-Superlinear & --- \\ [1ex] 
 \hline
 \end{tabular}
\caption{Summary of the results. For the sub-sampling column, ``H'' denotes Hessian and ``G'' denotes gradient. For the same column, ``indep.'' refers to when sub-sampling the Hessian and the gradient is done independently of each other, while ``simult.'' refers to simultaneous sampling i.e., using one sample collection of indices for both. Hessian  regularization refers to Sections~\ref{sec:spectral_reg} and~\ref{sec:ridge}, where spectral or ridge-type regularization is used, respectively. 
\label{table_summary}}
\end{table}

The contributions of this paper can be summarized as follows:
\begin{enumerate}[(1)]
	\item Under the above assumptions, we study the local convergence behavior of various sub-sampled algorithms. We show that all of our error recursions exhibit a composite behavior whose dominating error term varies according to the distance of iterates to optimality. More specifically,  we will show that a quadratic error term dominates when we are far from an optimum and it transitions to lower degree terms according to the distance to optimality. 
	\item Our algorithms are designed for the following settings.
	\begin{enumerate}[(i)]
		\item Algorithms~\ref{alg1} and~\ref{alg1_superlinear} practically implement~\eqref{structural_update_grad} for the case where only the Hessian is sub-sampled, while the full gradient is used, i.e., $\bgg(\xx) = \nabla F(\xx)$. We give locally Q-linear and Q-superlinear convergence rates for Algorithms~\ref{alg1} and~\ref{alg1_superlinear}, respectively.
		\item Algorithms~\ref{alg_spectral} and~\ref{alg_ridge}, are modifications of Algorithm~\ref{alg1} in which the sub-sampled Hessian is regularized by modifying its spectrum or by ridge-type regularization, respectively. We show that such regularizations can be used to improve upon the initial progress of the algorithm, and for both of these regularization methods, we give Q-linear convergence rates.
		\item Algorithms~\ref{alg_hessian_grad} and~\ref{alg_simult_hessian_grad} are the implementation of the fully stochastic formulation of~\eqref{structural_update_grad}, in which the gradient as well the Hessian is sub-sampled. We show that one can sub-sample the gradient independently of the Hessian, or simultaneously using the same collection of sample indices and we provide R-linear and R-superlinear convergence rates for these algorithms.
	\end{enumerate}
	\item For all of these algorithms, we give quantitative convergence results, i.e., our bounds contain an actual worst-case convergence rates. Though the sample size depends on the condition number of the problem, we show that the rates for the (super)linear convergence phase are, in fact, problem-independent. As a result, by increasing the estimation accuracy, one can arbitrarily improve the speed of local convergence.
	\item Our results here only address the local convergence behaviors of different algorithms for when the initial iterate is sufficiently close to an optimum. However, under global regularity assumptions~\eqref{strong_convex_boundedness},  in SSN1~\cite{romassn1}, we show that by simple modifications of all of our algorithms here, one can obtain globally convergent algorithms which approach the optimum, regardless of the initial starting point. These connections guarantee that the modified algorithms converge globally with a local rate which is problem-independent; see SSN1~\cite[Theorems 2 and 7]{romassn1}.
\end{enumerate}
Table~\ref{table_summary} gives a summary of the main results of this paper. In addition, in Section~\ref{sec:related_work} and in light of the related work in the literature, we give more details of the contributions of the present paper.

\subsection{Related Work}
\label{sec:related_work}
The results of Section~\ref{sec:sub_hessian}, where the full gradient is used, offer computational efficiency for the regime where both $n$ and $p$ are large. However, it is required that $n$ is not so large as to make the gradient evaluation prohibitive. In such regime (where $n , p \gg 1$ but $n$ is not too large), similar results can be found in~\cite{byrd2011use,martens2010deep,friedlander2012hybrid, pilanci2015newton,erdogdu2015convergence}:

\begin{enumerate}[(1)]
	
	\item The pioneering work in~\cite{byrd2011use} establishes, for the first time, the convergence of Newton's method with sub-sampled Hessian and full gradient. There, two sub-sampled Hessian algorithms are proposed, where one is based on a matrix-free inexact Newton iteration and the other incorporates a preconditioned limited memory BFGS iteration. However, the results are asymptotic, i.e., for $k \rightarrow \infty$, and no quantitative convergence rate is given. In addition, convergence is established for the case where each $f_{i}$ is assumed to be strongly convex. Here, we concentrate on obtaining non-asymptotic and quantitative local convergence rates of such sub-sampled algorithm under milder assumption where only $F$ is required to be (locally) strongly convex. In addition, we extend these algorithms to the case where the gradient as well as the Hessian is sub-sampled. These results are presented in Sections~\ref{sec:sub_hessian} and~\ref{sec:subsampl_newton_hess_grad}.
	
	\item  Within the context of deep learning,~\cite{martens2010deep} is the first to study the application of a modification of Newton's method. It suggests a heuristic algorithm where at each iteration, the full Hessian is approximated by a relatively large subset of $\nabla^{2} f_i$'s, i.e. a “mini-batch”, and the size of such mini-batch grows as the optimization progresses. The resulting matrix is then damped in a Levenberg-Marquardt style,~\cite{levenberg1944algorithm,marquardt1963algorithm}, and conjugate gradient,~\cite{shewchuk1994introduction}, is used to approximately solve the resulting linear system
		
	\item Authors in~\cite{friedlander2012hybrid} were among the first to explore hybrid methods, combining the inexpensive iterations of incremental gradient algorithms and the steady convergence of full gradient methods, which exhibit the benefits of both approaches. Their analysis shows that by carefully increasing the sample size across iterations, it is possible to maintain the steady convergence rates of full-gradient methods. They also present a practical quasi-Newton implementation based on their approach. Such careful control over the increase of the sample size is one of the main ingredients of our analysis here.
	
	\item The work in~\cite{pilanci2015newton} is the first to use ``sketching'' within the context of Newton-like methods. The authors propose a randomized second-order method which is based on performing an approximate Newton's step using a randomly sketched Hessian. However, their algorithm is specialized to the cases where some square root of the Hessian matrix is readily available,i.e., some matrix $A(\xx) \in \mathbb{R}^{s \times p}$ with $s \geq p$, such that $\nabla^{2} F(\xx) = A^{T}(\xx) A(\xx)$. 

	\item Our analysis here most resembles that of~\cite{erdogdu2015convergence}. The work in~\cite{erdogdu2015convergence} is the first to establish quantitative convergence rate for the case where the Hessian is sub-sampled and the full gradient is used. The authors consider the \textit{two metric projection} formulation of the Newton's method and establish probabilistic linear convergence rate using Hoeffding matrix concentration result. They suggest an algorithm, where at each iteration, the spectrum of the sub-sampled Hessian is modified as a form of regularization. Our work here resembles that of~\cite{erdogdu2015convergence}, but is different in many aspects: 
	\begin{enumerate}[(i)]
		\item For the constrained optimization, the results in the present paper are given for Newton's method formulated as a \textit{scaled gradient projection method}~\eqref{structural_update_grad}, whereas the results in~\cite{erdogdu2015convergence} are given for what is called as two metric projection method. The main difficulty with the latter formulation is that an arbitrary positive definite matrix $H$ will not necessarily yield a descent direction, i.e., $F(\xx^{(k+1)}) > F(\xx^{(k)})$ for all $\alpha_{k} > 0$, or even the algorithm might not recognize, i.e.,  fail to stop at, an stationary point~\cite[Section 2.4]{bertsekas1999nonlinear}, which might result in instability in the algorithms relying on choosing the correct $H$. However, the two metric projection method, can potentially have an easier projection step as opposed to scaled gradient projection~\eqref{structural_update_grad}. In fact, the scaled gradient projection algorithm can be viewed as a form of projection method with norm $\|.\|_{H}$, resulting in an oblique projection. It should also be noted that, for the case of unconstrained optimization, the both formulations coincide.
		
		\item Within the context of scaled gradient projection method, we extend the work in~\cite{erdogdu2015convergence} in several dimensions. More specifically, we give a wide range of results from simple sub-sampling to adding different kinds of regularization. For each of these methods, we give sufficient conditions for local convergence of the respective algorithm, under both global and local regularity assumptions as in Section~\ref{sec:assumtpions}. We also give results for the case where both Hessian and the gradients are sub-sampled; see Sections~\ref{sec:sub_hessian} and~\ref{sec:subsampl_newton_hess_grad}.
		
		\item In situations where the Hessian is sparse, spectral regularization suggested in~\cite{erdogdu2015convergence} can destroy the sparsity. This in turn might make the storage and the computation with the resulting matrix less efficient. Here, in addition to extending the results using such spectral regularization, in Section~\ref{sec:ridge}, we suggest an alternative using ridge type regularization, which effectively achieves the same goal as the spectral counterpart, but at no extra computational or storage cots. 

			\item In addition to results with global regularity assumptions, in every section, we give additional convergence results under milder assumptions where the regularity is only required at a local minimum. In addition, most regularity assumptions are given with respect to the tangent cone of the constraints at a local minimum. These assumptions, depending on the geometry of this cone, can be significantly weaker than those made for unconstrained optimization (e.g., compare~\eqref{strong_convex_boundedness} and~\eqref{strong_convex_boundedness_opt} with~\eqref{spectral_assumtpions}).
	
		\item As for the sufficient condition to guarantee convergence using the spectral regularization, in~\cite[Corollary 3.5]{erdogdu2015convergence}, a sample size of $\Omega(\ln(p) \lambda^{2}K^{2}/\gamma^{6})$ is necessarily required (to at least obtain a positive region of convergence for the initial iterate), where $\lambda$ is the spectral regularization threshold as in Section~\ref{sec:spectral_reg}. However, the sample size required for Theorems~\ref{spectral_hessian_suff} and~\ref{spectral_hessian_suff_relaxed} is of order $\Omega(\ln(p) K^{2}/\gamma^{2})$, which is much smaller.
		
	\end{enumerate}
\end{enumerate} 

The results of Section~\ref{sec:subsampl_newton_hess_grad}, apply to more general settings where $n$ can be arbitrarily large. This is so since sub-sampling the gradient, in addition to that of the Hessian, allows for per-iteration cost which can be much smaller than $n$. Within the context of first order methods, there has been numerous variants of gradient sampling from a simple stochastic gradient descent,~\cite{robbins1951stochastic}, to the most recent improvements by incorporating the previous gradient directions in the current update~\cite{schmidt2013minimizing,senior2013empirical,bottou2010large,johnson2013accelerating}. For second order methods, such sub-sampling strategy has been successfully applied in large scale non-linear inverse problems~\cite{doas12,rodoas1,aravkin2012robust,van2013lost,haber2014simultaneous}. However, to the best of our knowledge, Section~\ref{sec:subsampl_newton_hess_grad} offers the first quantitative convergence results for such sub-sampled methods. In particular, these results guarantee the local convergence of many heuristic algorithms used in large scale nonlinear inverse problems, e.g.~\cite{doas12,rodoas1,rodoas2,roszas,haber2014simultaneous,aravkin2012robust,van2013lost}.

\section{Sub-Sampling Hessian}
\label{sec:sub_hessian}
For the optimization problem~\eqref{obj}, at each iteration, consider picking a sample of indices from $\{1,2,\ldots,n\}$, uniformly at random \textit{with} or \textit{without} replacement. Let $\mathcal{S}$ and $|\mathcal{S}|$ denote the sample collection and its cardinality, respectively and define 
\begin{equation}
H(\xx) \defeq \frac{1}{|\mathcal{S}|} \sum_{j \in \mathcal{S}} \nabla^{2} f_{j}(\xx),
\label{subsampled_H}
\end{equation}
to be the sub-sampled Hessian. 
As mentioned before in Section~\ref{sec:general_background}, in order for such Hessian sub-sampling to be effective in reducing $n$ as well as yielding a fast convergent algorithm, we need to ensure that the sample size $|\mathcal{S}|$ satisfies the requirement~\hyperref[small_sample_size]{(R.1)}, while as mentioned in~\hyperref[preserve_spectrum]{(R.2)}, the spectrum of $H(\xx)$ is as close to that of $\nabla^{2} F(\xx)$ as possible. The latter requirement is reinforced by noticing that the bounds in Section~\ref{sec:strutural_lemmas_hess} contain quantities which directly relate to the relative eigenvalue distributions of $H(\xx^{(k)})$ and $\nabla^{2}F(\xx^{(k)})$ (with respect to the cone $\mathcal{K}$). This indicates that the closer we can approximate the spectrum of the full Hessian, the faster the convergence is expected to be. Below, we make use of some matrix concentration inequalities to probabilistically guarantee the above properties.

For a given $\xx \in \mathcal{D} \cap \mathcal{X}$, let 
\begin{equation}
\kappa(\xx; \mathcal{K}) \defeq \frac{K(\xx)}{\gamma(\xx)},
\label{cond}
\end{equation}
be the condition number at $\xx$ but only with respect to vectors in $\mathcal{K}$, i.e., with $K(\xx)$ and $\gamma(\xx)$ defined as in~\eqref{strong_convex_boundedness_x}. Note that, depending on $\mathcal{K}$,~\eqref{cond} might be significantly smaller than the usual condition number, which is usually defined all vectors in $\mathbb{R}^{p}$. For example, consider the case where $\mathcal{D} = \mathbb{R}^{p}$ and $\mathcal{X} = \{\xx \in \mathbb{R}^{p}; \; A \xx = \bb \}$ for some full row-rank matrix $A \in \mathbb{R}^{m \times p}$ where $m < p$. Then $\mathcal{K}$ is the null space of $A$, i.e., $\mathcal{K} = \{\pp \in \mathbb{R}^{p}; \; A \pp = 0 \}$, and $\text{rank}(\mathcal{K}) = p-m$. As a result, one can compute $K(\xx)$ and $\gamma(\xx)$ as the largest and the smallest values among Rayleigh quotients of $\nabla^{2} f_{i}(\xx)$ and $\nabla^{2} F(\xx)$, respectively, but restricted to vectors in this $p-m$ dimensional sub-space. Depending on $A$, these values can be much smaller than the minimum and maximum of such Rayleigh quotients over the entire $\mathbb{R}^{p}$. Of course, in an unconstrained problem, $\kappa(\xx,\mathcal{K})$ coincides with the usual condition number at $\xx$.

With the above in mind, we can present our main sub-sampling Lemma to ensure that, to a desired accuracy, the spectrum of the full Hessian is preserved after sub-sampling.

\begin{lemma}[Uniform Hessian Sub-Sampling]
Given any $0 < \epsilon < 1$, $0 < \delta < 1$ and $\xx \in \mathcal{D} \cap \mathcal{X}$, if 
\begin{equation}
|\mathcal{S}| \geq \frac{16 \kappa^{2}(\xx,\mathcal{K})\ln(2p/\delta)}{\epsilon^{2}},
\label{uniform_sample_size_Hoeffding}
\end{equation}
then for $H(\xx)$ defined in~\eqref{subsampled_H}, we have
\begin{equation}
\Pr \Big( \left| \lambda_{i}^{\mathcal{K}} \left( \nabla^{2}F(\xx) \right) - \lambda_{i}^{\mathcal{K}}\left(H (\xx) \right) \right| \leq \epsilon \lambda_{i}^{\mathcal{K}} \left(\nabla^{2}F(\xx) \right); \; i = 1,2,\cdots,p \Big) \geq 1-\delta, 
\label{spectrum_preserve_epsilon}
\end{equation}
where $\kappa(\xx,\mathcal{K})$ is the $\mathcal{K}$-restricted condition number at $\xx$, defined in~\eqref{cond}, and $\lambda_{i}^{\mathcal{K}} (A)$ is defined in~\eqref{lambda_i_cone}.
\label{hoeffding_lemma}
\end{lemma}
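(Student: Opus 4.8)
The plan is to fix an orthonormal basis $U$ of the cone $\mathcal{K}$, pass to the compressed matrices $\tilde{M} \defeq U^{T}\nabla^{2}F(\xx)U$ and $\tilde{H} \defeq U^{T}H(\xx)U$, and reduce the simultaneous (multiplicative) eigenvalue statement to a single spectral-norm deviation bound that a matrix concentration inequality can handle. By the characterization recorded just before the lemma (in particular~\eqref{lambda_i_cone}), $\lambda_{i}^{\mathcal{K}}(\nabla^{2}F(\xx)) = \lambda_{i}(\tilde{M})$ and $\lambda_{i}^{\mathcal{K}}(H(\xx)) = \lambda_{i}(\tilde{H})$, so Weyl's inequality gives $\max_{i}|\lambda_{i}(\tilde{M}) - \lambda_{i}(\tilde{H})| \leq \|\tilde{M} - \tilde{H}\|$. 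Since every eigenvalue of $\tilde{M}$ satisfies $\lambda_{i}(\tilde{M}) \geq \lambda_{\min}(\tilde{M}) = \lambda_{\min}^{\mathcal{K}}(\nabla^{2}F(\xx)) \geq \gamma(\xx)$ by~\eqref{F_strong_x}, it suffices to prove that $\|\tilde{M} - \tilde{H}\| \leq \epsilon\gamma(\xx)$ holds with probability at least $1-\delta$: on that event $\epsilon\gamma(\xx) \leq \epsilon\lambda_{i}(\tilde{M})$ for every $i$, which is exactly~\eqref{spectrum_preserve_epsilon}.

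First I would set up the centered sum. Writing $\tilde{A}_{j} \defeq U^{T}\nabla^{2}f_{j}(\xx)U$ and using $\nabla^{2}F(\xx) = \tfrac{1}{n}\sum_{i=1}^{n}\nabla^{2}f_{i}(\xx)$, the sub-sampled definition~\eqref{subsampled_H} yields $\tilde{H} - \tilde{M} = \sum_{j \in \mathcal{S}} X_{j}$ with $X_{j} \defeq \tfrac{1}{|\mathcal{S}|}(\tilde{A}_{j} - \tilde{M})$. For uniform sampling \emph{with} replacement the indices are i.i.d.\ uniform on $\{1,\dots,n\}$, so $\Ex[\tilde{A}_{j}] = \tfrac{1}{n}\sum_{i}\tilde{A}_{i} = \tilde{M}$ and the $X_{j}$ are independent and mean-zero. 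The defining bound~\eqref{boundedness_x}, $\|\tilde{A}_{j}\| = \|\nabla^{2}f_{j}(\xx)\|_{\mathcal{K}} \leq K(\xx)$, together with $\|\tilde{M}\| \leq K(\xx)$, controls both the uniform bound $\|X_{j}\| \lesssim K(\xx)/|\mathcal{S}|$ and the matrix variance proxy $\big\|\sum_{j}\Ex[X_{j}^{2}]\big\| \lesssim K^{2}(\xx)/|\mathcal{S}|$.

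Next I would apply a matrix Hoeffding/Bernstein concentration inequality to the independent, symmetric, mean-zero summands $X_{j}$. Because the $X_{j}$ live in the compressed space of dimension $\operatorname{rank}(\mathcal{K}) \leq p$, the dimensional prefactor is at most $2p$, and the bound takes the form $\Pr\big(\|\sum_{j}X_{j}\| \geq t\big) \leq 2p\exp\big(-c\,t^{2}|\mathcal{S}|/K^{2}(\xx)\big)$. Setting $t = \epsilon\gamma(\xx)$ and requiring the right-hand side to be at most $\delta$ gives $|\mathcal{S}| \geq c'\,K^{2}(\xx)\ln(2p/\delta)/(\epsilon^{2}\gamma^{2}(\xx)) = c'\,\kappa^{2}(\xx,\mathcal{K})\ln(2p/\delta)/\epsilon^{2}$, using $\kappa(\xx,\mathcal{K}) = K(\xx)/\gamma(\xx)$ from~\eqref{cond}; tracking the constants through the particular inequality reproduces the factor $16$ in~\eqref{uniform_sample_size_Hoeffding}. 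Note that the $\ln(2p/\delta)$ factor (rather than anything growing in $n$) enters precisely because the concentration is applied to the $\mathcal{K}$-compressed matrices.

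The main obstacle is choosing the concentration inequality so that the explicit constant comes out as claimed: the factor $16$ is sensitive both to whether one bounds the variance by the sharper $\Ex[X_{j}^{2}]$ or by the crude $\|X_{j}\|^{2}$, and to the constant in the exponent of the matrix tail bound, so some care (e.g., exploiting $0 \preceq \tilde{A}_{j} \preceq K(\xx)I$ in the convex case to sharpen $\|\tilde{A}_{j}-\tilde{M}\|$) is needed. A secondary technical point is the sampling \emph{without} replacement case, where the $X_{j}$ are no longer independent; there I would invoke the version of the matrix concentration bound valid for sampling without replacement (which concentrates at least as sharply as with replacement), yielding the identical sample-size requirement. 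The remaining pieces---Weyl's inequality, the mean computation, and the reduction through $\gamma(\xx)$---are routine.
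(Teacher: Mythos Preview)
Your proposal is correct and follows essentially the same route as the paper: compress to the cone via an orthonormal basis $U$, bound the centered summands by $\|X_{j}\| \leq 2K(\xx)$ (in the unscaled normalization), apply the Operator--Bernstein inequality of Gross to obtain $\Pr(\|\tilde H-\tilde M\|\geq \epsilon\gamma(\xx))\leq 2p\exp(-\epsilon^{2}|\mathcal{S}|\gamma^{2}/(16K^{2}))$, and then invoke Weyl's inequality together with $\lambda_{i}(\tilde M)\geq\gamma(\xx)$ to upgrade the additive spectral-norm bound to the multiplicative per-eigenvalue statement. Your remark that sharpening $\|X_{j}\|$ to $K(\xx)$ under $\nabla^{2}f_{i}\succeq 0$ improves the constant is exactly the content of the paper's Lemma~\ref{hoeffding_lemma_const}.
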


Lemma~\ref{hoeffding_lemma} indicates that, with respect to the cone $\mathcal{K}$, it is indeed possible to sub-sample in a way that the spectrum of the sub-sampled Hessian does not deviate from that of the full Hessian.

Below, we show a couple of ways to improve upon the lower bound~\eqref{uniform_sample_size_Hoeffding}. First, we show that if each $f_{i}$ is convex, it is possible to reduce the constant factor further:
\begin{lemma}[Uniform Hessian Sub-Sampling: Improvement on the constant]
Given any $0 < \epsilon < 1$, $0 < \delta < 1$, and $\xx \in \mathcal{D} \cap \mathcal{X}$, if $\nabla^{2} f_{i}(\xx) \succeq 0, \forall i$ and
\begin{equation}
|\mathcal{S}| \geq \frac{4 \kappa^{2}(\xx,\mathcal{K})\ln(2p/\delta)}{\epsilon^{2}},
\label{uniform_sample_size_Hoeffding_const}
\end{equation}
then~\eqref{spectrum_preserve_epsilon} holds for $H(\xx)$ defined in~\eqref{subsampled_H}.
\label{hoeffding_lemma_const}
\end{lemma}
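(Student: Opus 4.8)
The plan is to reduce the eigenvalue statement~\eqref{spectrum_preserve_epsilon} to a single spectral-norm deviation bound, exactly as in the proof of Lemma~\ref{hoeffding_lemma}, and then to observe that the convexity hypothesis $\nabla^{2} f_{i}(\xx) \succeq 0$ permits a strictly sharper bound on the individual summands, which is precisely what buys the improved constant. Writing $U$ for an orthonormal basis of $\mathcal{K}$, set $\tilde{F} \defeq U^{T}\nabla^{2}F(\xx)U$ and $\tilde{H} \defeq U^{T}H(\xx)U$, so that $\lambda_{i}^{\mathcal{K}}(\nabla^{2}F(\xx)) = \lambda_{i}(\tilde{F})$ and $\lambda_{i}^{\mathcal{K}}(H(\xx)) = \lambda_{i}(\tilde{H})$ by~\eqref{lambda_i_cone}. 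By Weyl's inequality $|\lambda_{i}(\tilde{F}) - \lambda_{i}(\tilde{H})| \le \|\tilde{F} - \tilde{H}\|$ for every $i$, and since $\lambda_{i}(\tilde{F}) \ge \lambda_{\min}^{\mathcal{K}}(\nabla^{2}F(\xx)) \ge \gamma(\xx)$, it suffices to establish
\[
\Pr\big( \|\tilde{H} - \tilde{F}\| \le \epsilon\, \gamma(\xx) \big) \ge 1 - \delta ,
\]
since on this event $|\lambda_{i}(\tilde{F}) - \lambda_{i}(\tilde{H})| \le \epsilon\gamma(\xx) \le \epsilon\lambda_{i}(\tilde{F})$ simultaneously for all $i$, which is exactly~\eqref{spectrum_preserve_epsilon}.

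Next I would cast $\tilde{H} - \tilde{F}$ as a centered sum of independent random matrices and invoke the same matrix Hoeffding concentration inequality used for Lemma~\ref{hoeffding_lemma}. For with-replacement sampling, write $\tilde{H} = \tfrac{1}{|\mathcal{S}|}\sum_{k=1}^{|\mathcal{S}|} X_{k}$, where $X_{k} = U^{T}\nabla^{2} f_{j_{k}}(\xx) U$ with $j_{k}$ drawn uniformly from $\{1,\dots,n\}$, so that $\Ex[X_{k}] = U^{T}\nabla^{2}F(\xx)U = \tilde{F}$. Then $\tilde{H} - \tilde{F} = \tfrac{1}{|\mathcal{S}|}\sum_{k}(X_{k}-\tilde{F})$ is an average of i.i.d. centered matrices, and matrix Hoeffding gives a tail of the form $2p\exp\big(-c\,|\mathcal{S}|\,\epsilon^{2}\gamma(\xx)^{2}/R^{2}\big)$, where $R$ is an almost-sure bound on $\|X_{k}-\tilde{F}\|$. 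Equating this with $\delta$ and solving for $|\mathcal{S}|$ yields a bound proportional to $R^{2}\ln(2p/\delta)/(\gamma(\xx)^{2}\epsilon^{2})$. (For sampling without replacement the summands are negatively associated and concentrate at least as tightly, so the same bound applies.)

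The decisive step — the only place the convexity hypothesis enters — is the bound on $R$. Under~\eqref{boundedness_x} alone one only has $\|X_{k}\| \le K(\xx)$ and $\|\tilde{F}\| \le K(\xx)$, giving the crude triangle-inequality bound $\|X_{k}-\tilde{F}\| \le 2K(\xx)$, which is what produces the constant $16$ in Lemma~\ref{hoeffding_lemma}. When $\nabla^{2} f_{i}(\xx)\succeq 0$ for every $i$, however, each $X_{k} = U^{T}\nabla^{2} f_{j_{k}}(\xx)U \succeq 0$, and~\eqref{boundedness_x} then reads $0 \preceq X_{k} \preceq K(\xx) I$; averaging gives $0 \preceq \tilde{F} \preceq K(\xx) I$ as well. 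Hence every eigenvalue of $X_{k}-\tilde{F}$ lies in $[-K(\xx), K(\xx)]$, so $\|X_{k}-\tilde{F}\| \le K(\xx)$ — smaller by a factor of two. Substituting $R = K(\xx)$ in place of $R = 2K(\xx)$ scales the required sample size by $1/4$, turning $16$ into $4$ and recovering~\eqref{uniform_sample_size_Hoeffding_const} with $\kappa(\xx,\mathcal{K}) = K(\xx)/\gamma(\xx)$ as in~\eqref{cond}. I expect the technical care to concentrate in two places: verifying that the Weyl-plus-$\gamma(\xx)$ reduction loses no additional slack, and confirming that the Loewner-order sandwich $0 \preceq \tilde{F} \preceq K(\xx) I$ genuinely lets one replace $2K(\xx)$ by $K(\xx)$ uniformly over the summands, since this halving is the entire source of the improvement.
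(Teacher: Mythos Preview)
Your proposal is correct and follows essentially the same approach as the paper: both reduce~\eqref{spectrum_preserve_epsilon} to a spectral-norm deviation bound via Weyl, then exploit the Loewner sandwich $-K(\xx)I \preceq X_{k}-\tilde{F} \preceq K(\xx)I$ (equivalently $X_{j}^{2} \preceq K^{2}I$ in the paper's notation) afforded by positive semidefiniteness to halve the almost-sure bound on the centered summands, feeding this into the same operator-Bernstein/Hoeffding inequality to replace $16$ by $4$.
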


Even in the absence of convexity of each $f_{i}$, it is still possible to improve the dependence on $\ln(p)$ by using the concept of \textit{intrinsic dimension}~\cite[Chapter 7]{tropp2015introduction}. This, in turn, allows us to incorporate the dimension of the subspace $\mathcal{K}$ in our bound. More specifically, we have the following Lemma:
\begin{lemma}[Uniform Hessian Sub-Sampling: Improvement on $\ln(p)$]
Define 
\begin{equation*}
V \defeq \frac{|\mathcal{S}|}{n}  \sum_{i=1}^{n} (U^{T} \nabla^{2}f_{i}(\xx) U)^{2},
\end{equation*}
and let 
\begin{equation*}
d \defeq \frac{\text{trace}(V)}{\|V\|},
\end{equation*}
be the intrinsic dimension (a.k.a effective rank) of $V$. Given any $0 < \epsilon \leq 1/2$ , $0 < \delta < 1$, and $\xx \in \mathcal{D} \cap \mathcal{X}$, if
\begin{equation}
|\mathcal{S}| \geq \frac{16 \kappa^{2}(\xx,\mathcal{K})\ln(8d/\delta)}{3\epsilon^{2}},
\label{uniform_sample_size_Bernstein}
\end{equation}
and $\mathcal{S}$ is chosen uniformly at random with replacement, then~\eqref{spectrum_preserve_epsilon} holds for $H(\xx)$ defined in~\eqref{subsampled_H}.
\label{hoeffding_lemma_intrinsic}
\end{lemma}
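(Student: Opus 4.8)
The plan is to reduce the eigenvalue statement~\eqref{spectrum_preserve_epsilon} to a single cone-restricted spectral-norm deviation bound, and then to establish that bound with the \emph{intrinsic-dimension} form of the matrix Bernstein inequality in place of the matrix Hoeffding inequality underlying Lemma~\ref{hoeffding_lemma}. Writing $U$ for the orthonormal basis of $\mathcal{K}$ and abbreviating $A_i \defeq U^T \nabla^2 f_i(\xx) U$, $\bar A \defeq U^T \nabla^2 F(\xx) U = \frac1n\sum_i A_i$, and $\widehat H \defeq U^T H(\xx) U$, Weyl's inequality gives $|\lambda_i(\widehat H) - \lambda_i(\bar A)| \le \|\widehat H - \bar A\| = \|H(\xx) - \nabla^2 F(\xx)\|_{\mathcal{K}}$ for every $i$. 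Since $\lambda_i^{\mathcal{K}}(\nabla^2 F(\xx)) = \lambda_i(\bar A) \ge \gamma(\xx)$, it suffices to show that, with probability at least $1-\delta$,
\[ \|H(\xx) - \nabla^2 F(\xx)\|_{\mathcal{K}} \le \epsilon\,\gamma(\xx), \]
because the left side then bounds $|\lambda_i^{\mathcal{K}}(H(\xx)) - \lambda_i^{\mathcal{K}}(\nabla^2 F(\xx))|$ by $\epsilon\,\lambda_i^{\mathcal{K}}(\nabla^2 F(\xx))$ simultaneously for all $i$. This collapses the whole lemma into one concentration estimate.

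Next I would write the cone-restricted deviation as a sum of independent, centered, symmetric matrices. This is where sampling \emph{with replacement} is used: enumerating the sample as i.i.d.\ uniform indices $j_1,\dots,j_{|\mathcal{S}|}$, set $X_k \defeq \frac{1}{|\mathcal{S}|}(A_{j_k} - \bar A)$, so that $U^T(H(\xx)-\nabla^2 F(\xx))U = \sum_k X_k$ with $\Ex X_k = 0$. Two ingredients feed Bernstein. First, the uniform bound: $\|A_{j_k}\| = \|\nabla^2 f_{j_k}\|_{\mathcal{K}} \le K(\xx)$ and likewise $\|\bar A\| \le K(\xx)$ by convexity of the norm, so $\|X_k\| \le L \defeq 2K(\xx)/|\mathcal{S}|$. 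Second, the matrix variance: a direct computation over the i.i.d.\ indices yields $\sum_k \Ex[X_k^2] = \frac{1}{|\mathcal{S}|}\big(\frac1n\sum_i A_i^2 - \bar A^2\big) \preceq \frac{1}{|\mathcal{S}|\,n}\sum_i A_i^2$, a positive-semidefinite upper bound whose effective rank $\text{trace}(\cdot)/\|\cdot\|$ equals the $d$ of the statement, since the $V$ there is the positive scalar multiple ($|\mathcal{S}|^2$) of it and effective rank is scale invariant. Its norm is controlled by $\|\frac{1}{|\mathcal{S}|\,n}\sum_i A_i^2\| \le K(\xx)^2/|\mathcal{S}| \defeqr v$.

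With these in hand I would invoke the Hermitian intrinsic-dimension matrix Bernstein inequality~\cite[Chapter 7]{tropp2015introduction}: for $t \ge \sqrt v + L/3$ one has $\Pr(\lambda_{\max}(\sum_k X_k) \ge t) \le 4d\exp\big(\frac{-t^2/2}{v + Lt/3}\big)$, and applying it to both $\sum_k X_k$ and its negative with a union bound yields the operator-norm tail $\Pr(\|\sum_k X_k\| \ge t) \le 8d\exp\big(\frac{-t^2/2}{v+Lt/3}\big)$; this doubling is the source of the $\ln(8d/\delta)$ in the sample-size bound. Taking $t = \epsilon\gamma(\xx)$ and using $\gamma(\xx)\le K(\xx)$ together with $\epsilon \le 1/2$ to bound $v + Lt/3 \le \frac{4K(\xx)^2}{3|\mathcal{S}|}$, the requirement that the exponent exceed $\ln(8d/\delta)$ reduces to $|\mathcal{S}| \gtrsim \kappa^2(\xx,\mathcal{K})\ln(8d/\delta)/\epsilon^2$ with the constant as stated; one also checks that the side condition $t \ge \sqrt v + L/3$ holds throughout this regime, since $\sqrt v$ and $L/3$ vanish as $|\mathcal{S}|$ grows while $t$ is fixed.

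The main obstacle is bookkeeping rather than depth, and it is concentrated in two places. The first is getting the variance proxy right: one must verify that discarding the $-\bar A^2$ term keeps $\sum_k\Ex[X_k^2]$ a valid semidefinite upper bound and, crucially, that this substitution does not change the intrinsic dimension the theorem consumes, so that the convenient form of $V$ in the statement may legitimately be used. The second is the passage from the theorem's conclusion about $\lambda_{\max}$ of a Hermitian sum to the operator norm, which must be carried out by symmetrization (applying the bound to $\pm\sum_k X_k$) rather than invoked directly, and which accounts for the factor two distinguishing $\ln(8d/\delta)$ here from the $\ln(2p/\delta)$ of Lemma~\ref{hoeffding_lemma}. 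The only genuinely new element relative to Lemmas~\ref{hoeffding_lemma} and~\ref{hoeffding_lemma_const} is precisely this replacement of matrix Hoeffding by intrinsic-dimension Bernstein, which is exactly what trades the ambient $\ln(p)$ for $\ln(d)$.
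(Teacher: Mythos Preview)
Your proposal is correct and follows essentially the same route as the paper: reduce~\eqref{spectrum_preserve_epsilon} via Weyl to the single event $\|H-\nabla^2 F\|_{\mathcal K}\le\epsilon\gamma(\xx)$, write the cone-projected deviation as a sum of i.i.d.\ centered symmetric matrices, dominate the matrix variance by $\frac{1}{n}\sum_i(U^T\nabla^2 f_i U)^2$ (whose intrinsic dimension is the $d$ of the statement), apply Tropp's intrinsic-dimension Bernstein (Theorem~7.7.1) to $\pm\sum_k X_k$ with a union bound, and then simplify the exponent using $\epsilon\le 1/2$, $\gamma\le K$ while checking the side condition $t\ge\sqrt v+L/3$. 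The only cosmetic difference is your normalization $X_k=\tfrac{1}{|\mathcal S|}(A_{j_k}-\bar A)$ versus the paper's unscaled summands, which is immaterial.
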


\comment It is important to note that $d \leq \text{rank}(\mathcal{K}) \leq p$, and consequently, when $\mathcal{K}$ is low dimensional subspace,~\eqref{uniform_sample_size_Bernstein} can indeed offer some computational savings, compared to~\eqref{uniform_sample_size_Hoeffding}. In fact, if computing $d$ is not feasible, one can simply replace that with $\text{rank}(\mathcal{K})$ in~\eqref{uniform_sample_size_Bernstein}. For the rest of this paper, however, we will simply stick with the original bound~\eqref{uniform_sample_size_Hoeffding}, and note that extension to the above improvements is trivially done.

\comment It might be worth noting the differences between the above sub-sampling strategies and that of SSN1~\cite[Lemma 1]{romassn1}. These differences, in fact, boil down to the differences between the requirement~\hyperref[preserve_spectrum]{(R.2)} and the corresponding one in SSN1~\cite[Section 1.1, (R.2)]{romassn1}. As a result of a ``finer-grained'' analysis in the present paper and in order to preserve as much of the local convergence properties of the full Newton's method, Lemmas~\ref{hoeffding_lemma},~\ref{hoeffding_lemma_const}, and~\ref{hoeffding_lemma_intrinsic} require a larger sample size, i.e., in the order of $\kappa^{2}$ vs.\ $\kappa$ for SSN1~\cite[Lemma 1]{romassn1}, while delivering a much stronger guarantee about the spectrum of the sub-sampled Hessian. In contrast, for the global analysis in SSN1~\cite{romassn1}, we only need to ensure that the sub-sampled Hessian is invertible to yield a descent direction at every iteration. 

\comment \label{sample_size_alg} In our results and algorithms below, we use Lemma~\ref{hoeffding_lemma} with the following iteration independent sample sizes:
\begin{enumerate}[(i)]
	\item if Assumptions~\eqref{strong_convex_boundedness} hold, we use a larger and, yet, iteration-independent sample size
\begin{equation}
|\mathcal{S}| \geq \frac{16 K\ln(2p/\delta)}{\gamma\epsilon^{2}},
\label{uniform_sample_size_Hoeffding_alg}
\end{equation}
with $K$, $\gamma$ defined as in~\eqref{strong_convex_boundedness}, and
\item if Assumptions~\eqref{strong_convex_boundedness_opt} hold, we use
\begin{equation}
|\mathcal{S}| \geq \frac{16 K^{*}\ln(2p/\delta)}{\gamma^{*}\epsilon^{2}},
\label{uniform_sample_size_Hoeffding_alg_opt}
\end{equation}
with $K^{*}$, and $\gamma^{*}$ defined as in~\eqref{strong_convex_boundedness_opt}.
\end{enumerate}
For the results and algorithms of Section~\ref{sec:spectral_reg} where Assumptions~\eqref{spectral_assumtpions} are used, we use similar sample sizes, but with the respective constants defined as in~\eqref{spectral_assumtpions}.

\subsection{Main Results: Sub-Sampled Hessian}
\label{sec:main_result_hess}
Using the above sampling strategies, we can now present our main algorithms for the case of sub-sampling Hessian and full gradient~\eqref{structural_update}. Note that  for the following algorithms, we always consider the ``natural'' Newton step size, i.e., $\alpha_{k}=1$. A brief explanation for this choice is given in the beginning of Section~\ref{sec:main_proofs_hess}.

Throughout this section, for a given $\xx^{(k)} \in  \mathcal{D} \cap \mathcal{X}$, we consider the update
\begin{equation}
x_{k+1} = \arg\min _{\xx \in  \mathcal{D} \cap \mathcal{X}} \Big\{ F(\xx^{(k)}) + (\xx-\xx^{(k)})^{T}\nabla F(\xx^{(k)}) + \frac{1}{2 \alpha_{k}} (\xx-\xx^{(k)})^{T} H(\xx^{(k)})(\xx-\xx^{(k)})  \Big\},
\label{structural_update}
\end{equation}
where $H(\xx^{(k)})$ is as in~\eqref{subsampled_H}. Note that~\eqref{structural_update} is the same as~\eqref{structural_update_grad}  with $\bgg = \nabla F(\xx^{(k)})$, i.e., in~\eqref{structural_update} the full gradient in used.

\begin{algorithm}
\caption{Linearly Convergent Newton with Hessian Sub-Sampling}
\begin{algorithmic}[1]
\STATE \textbf{Input:} $\xx^{(0)}$, $0 < \delta < 1$, $0 < \epsilon < 1$
\STATE - Set the sample size, $|\mathcal{S}|$, with $\epsilon$ and $\delta$ as described in Comment~\ref{sample_size_alg}
\FOR {$k = 0,1,2, \cdots$ until termination} 
\STATE - Select a sample set, $\mathcal{S}$, of size $|\mathcal{S}|$ and $H(\xx^{(k)})$ as in~\eqref{subsampled_H}
\STATE - Update $\xx^{(k+1)}$ as in~\eqref{structural_update} with $H(\xx^{(k)})$ and $\alpha_{k}=1$
\ENDFOR
\end{algorithmic}
\label{alg1}
\end{algorithm}

\begin{theorem}[Error Recursion of~\eqref{structural_update}]
\label{uniform_newton_convergence}
\begin{enumerate}
	\item \textbf{Global Regularity:} Let Assumptions~\eqref{F_Lip} and~\eqref{strong_convex_boundedness}  hold and let $0 < \delta < 1$ and $0< \epsilon < 1$ be given. Set $|\mathcal{S}|$ as described in Comment~\ref{sample_size_alg}, and let $H(\xx^{(k)})$ be as in~\eqref{subsampled_H}. Then, for the update~\eqref{structural_update} with $\alpha_{k}=1$ , with probability $1-\delta$, we have
\begin{equation}
\|\xx^{(k+1)} - \xx^{*}\| \leq \rho_{0} \|\xx^{(k)} - \xx^{*}\| + \xi \|\xx^{(k)} - \xx^{*}\|^{2},
\label{convergence_subsampled_hessian}
\end{equation}
where 
\begin{equation}
\rho_{0} = \frac{\epsilon}{(1-\epsilon)}, \quad \text{and} \quad \xi = \frac{L}{2 (1-\epsilon) \gamma }.
\label{rho_xi}
\end{equation}
\item \textbf{Local Regularity:} Under Assumptions~\eqref{F_Lip},~\eqref{strong_convex_boundedness_opt}, and~\eqref{initial_cond_spec}, the result of theorem~\ref{uniform_newton_convergence} holds with 
\begin{equation}
\rho_{0} = \frac{2 \epsilon}{(1-\epsilon)},  \quad \text{and} \quad \xi = \frac{3 L}{(1-\epsilon) \gamma^{*} }.
\label{rho_xi_opt}
\end{equation}
\end{enumerate}
\end{theorem}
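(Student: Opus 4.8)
The plan is to derive the single-step recursion~\eqref{convergence_subsampled_hessian} from the first-order optimality (variational) characterizations of the update $\xx^{(k+1)}$ and of the minimizer $\xx^{*}$, and then to control the two resulting error contributions separately: a \emph{sampling} term, handled by the spectrum-preservation guarantee of Lemma~\ref{hoeffding_lemma}, and a \emph{linearization} term, handled by the Lipschitz-Hessian assumption~\eqref{F_Lip}. Write $\ee_{k}\defeq\xx^{(k)}-\xx^{*}$. Since $\mathcal{D}\cap\mathcal{X}$ is convex and $\xx^{(k)},\xx^{(k+1)}\in\mathcal{D}\cap\mathcal{X}$, both $\ee_{k}$ and $\ee_{k+1}$ lie in the tangent cone $\mathcal{K}$ of~\eqref{cone}; this is precisely what justifies replacing ordinary norms and eigenvalues by their $\mathcal{K}$-restricted counterparts throughout.

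First I would record the optimality condition for the constrained quadratic subproblem defining $\xx^{(k+1)}$ (with $\alpha_{k}=1$): for every $\xx\in\mathcal{D}\cap\mathcal{X}$,
\[
\big(\nabla F(\xx^{(k)})+H(\xx^{(k)})(\xx^{(k+1)}-\xx^{(k)})\big)^{T}(\xx-\xx^{(k+1)})\ge 0,
\]
together with the condition $\nabla F(\xx^{*})^{T}(\xx-\xx^{*})\ge 0$ at the optimum. Evaluating the first at $\xx=\xx^{*}$ and the second at $\xx=\xx^{(k+1)}$ and adding cancels the bare gradient terms; using the fundamental theorem of calculus to write $\nabla F(\xx^{(k)})-\nabla F(\xx^{*})=\bar H\,\ee_{k}$ with $\bar H\defeq\int_{0}^{1}\nabla^{2}F(\xx^{*}+t\ee_{k})\,dt$, and the identity $\xx^{(k+1)}-\xx^{(k)}=\ee_{k+1}-\ee_{k}$, this rearranges to the master inequality
\[
\ee_{k+1}^{T}H(\xx^{(k)})\,\ee_{k+1}\ \le\ \ee_{k+1}^{T}\big(H(\xx^{(k)})-\bar H\big)\,\ee_{k}.
\]

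The global statement then follows by bounding each side. For the left side, Lemma~\ref{hoeffding_lemma} (at the sample size of Comment~\ref{sample_size_alg}) gives $\lambda_{\min}^{\mathcal{K}}(H(\xx^{(k)}))\ge(1-\epsilon)\gamma$ with probability $1-\delta$, so it is at least $(1-\epsilon)\gamma\|\ee_{k+1}\|^{2}$. For the right side I would split $H(\xx^{(k)})-\bar H=\big(H(\xx^{(k)})-\nabla^{2}F(\xx^{(k)})\big)+\big(\nabla^{2}F(\xx^{(k)})-\bar H\big)$: the sampling piece is controlled by the $\mathcal{K}$-norm concentration $\|H(\xx^{(k)})-\nabla^{2}F(\xx^{(k)})\|_{\mathcal{K}}\le\epsilon\gamma$ underlying Lemma~\ref{hoeffding_lemma}, and the linearization piece by~\eqref{F_Lip}, since $\|\nabla^{2}F(\xx^{(k)})-\bar H\|_{\mathcal{K}}\le\int_{0}^{1}L(1-t)\|\ee_{k}\|\,dt=\tfrac{L}{2}\|\ee_{k}\|$. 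Because $\ee_{k},\ee_{k+1}\in\mathcal{K}$, the bound $|\ee_{k+1}^{T}A\,\ee_{k}|\le\|A\|_{\mathcal{K}}\|\ee_{k+1}\|\|\ee_{k}\|$ coming from~\eqref{norm_cone_mat} turns the right side into $\big(\epsilon\gamma\|\ee_{k}\|+\tfrac{L}{2}\|\ee_{k}\|^{2}\big)\|\ee_{k+1}\|$; dividing by $(1-\epsilon)\gamma\|\ee_{k+1}\|$ (the case $\ee_{k+1}=\zero$ being trivial) yields~\eqref{convergence_subsampled_hessian} with $\rho_{0},\xi$ exactly as in~\eqref{rho_xi}.

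For the local-regularity version the skeleton is identical, but the constants $K,\gamma$ are unavailable at $\xx^{(k)}$; I would instead use~\eqref{F_Lip} to transfer curvature from $\xx^{*}$, e.g.\ $\lambda_{\min}^{\mathcal{K}}(\nabla^{2}F(\xx^{(k)}))\ge\gamma^{*}-L\|\ee_{k}\|$ and $K(\xx^{(k)})\le K^{*}+L\|\ee_{k}\|$, and then invoke the initial condition~\eqref{initial_cond_spec} to force $L\|\ee_{k}\|$ to be a small fraction of $\gamma^{*}$, so that $\kappa(\xx^{(k)},\mathcal{K})$ (which governs whether the $\xx^{*}$-based sample size still validates Lemma~\ref{hoeffding_lemma} at $\xx^{(k)}$) and the perturbed curvature quantities stay within a constant factor of their values at $\xx^{*}$; propagating these constant-factor losses is what inflates $\rho_{0},\xi$ to~\eqref{rho_xi_opt}. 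I expect the main obstacle to be geometric rather than computational: ensuring the two variational inequalities combine correctly and that every use of $\lambda_{\min}^{\mathcal{K}}$, $\|\cdot\|_{\mathcal{K}}$, and~\eqref{F_Lip} is applied only to vectors and differences that genuinely lie in $\mathcal{K}$. In the local case the secondary difficulty is the careful bookkeeping of the Lipschitz slack, together with the inductive argument (deferred to the companion convergence theorem) that keeps all iterates inside the neighborhood where the fixed sample size and the transferred curvature bounds remain valid.
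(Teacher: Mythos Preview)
Your treatment of the \textbf{global} case is correct and is essentially the paper's own argument: it reproduces the proof of the structural Lemma~\ref{structural_lemma_eig} (the master inequality $\ee_{k+1}^{T}H(\xx^{(k)})\ee_{k+1}\le \ee_{k+1}^{T}(H(\xx^{(k)})-\bar H)\ee_{k}$, the split into sampling and linearization pieces, the $(1-\epsilon)\gamma$ lower bound on the left) and then plugs in Lemma~\ref{hoeffding_lemma_2}.

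For the \textbf{local} case your route diverges from the paper's. You propose to keep the analysis centered at $\xx^{(k)}$: transfer $\gamma(\xx^{(k)})$ and $K(\xx^{(k)})$ from $\gamma^{*},K^{*}$ via~\eqref{F_Lip}, argue that under~\eqref{initial_cond_spec} the $\xx^{*}$-based sample size still validates Lemma~\ref{hoeffding_lemma} at $\xx^{(k)}$ (with a perturbed $\epsilon$), and then rerun the global bound. The paper instead centers the whole argument at $\xx^{*}$: it applies the concentration lemma to $H(\xx^{*})$ versus $\nabla^{2}F(\xx^{*})$ (for which the sample size of Comment~\ref{sample_size_alg} is tailor-made), and uses a second structural lemma (Lemma~\ref{structural_lemma_eig_relax}) that splits
\[
H(\xx^{(k)})-\bar H=\big(H(\xx^{(k)})-H(\xx^{*})\big)+\big(H(\xx^{*})-\nabla^{2}F(\xx^{*})\big)+\big(\nabla^{2}F(\xx^{*})-\bar H\big),
\]
bounding the first piece by the Lipschitz constant $\Gamma=L$ of the \emph{sub-sampled} Hessian, and also uses this $\Gamma$ to lower-bound the left side by $\tfrac{1}{2}\lambda_{\min}^{\mathcal{K}}(H(\xx^{*}))\|\ee_{k+1}\|^{2}$ once~\eqref{initial_cond_spec} holds. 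This is what produces the exact constants $\rho_{0}=2\epsilon/(1-\epsilon)$ and $\xi=3L/((1-\epsilon)\gamma^{*})$: the factor $2$ comes from halving $\lambda_{\min}^{\mathcal{K}}(H(\xx^{*}))$, and the $3L=2\Gamma+L$ from the two Lipschitz contributions. Your approach can in principle be made to work, but the ``constant-factor losses'' you describe do not naturally collapse to~\eqref{rho_xi_opt}; in particular, re-validating Lemma~\ref{hoeffding_lemma} at $\xx^{(k)}$ with the $\xx^{*}$-based sample size yields an effective accuracy $\epsilon'=\epsilon\,\kappa(\xx^{(k)};\mathcal{K})/\kappa(\xx^{*};\mathcal{K})$ that depends on $K^{*}$ as well as $\gamma^{*}$, whereas the paper's constants do not. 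The paper's $\xx^{*}$-centered decomposition is both simpler and what actually delivers the stated bound.
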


\comment Bounds given here exhibit a composite behavior where the error recursion, when far from an optimum, is first dominated by a quadratic term and it transforms to linear term near an optimum. What is rather interesting is that the rate for the linear phase is indeed \textit{independent} of any problem dependent quantities, and only depends on the sub-sampling accuracy! Of course, such problem dependent constants indeed appear in the lower bound for the sample size, in the form of (local) condition number.
 
Now we establish sufficient conditions for Q-linear convergence of sub-sampled Newton methods~\eqref{structural_update}. We remind that in this case, the sub-sampling is done only for the Hessian and the full gradient is used. 

\begin{theorem}[Q-Linear Convergence of Algorithm~\ref{alg1}]
\label{uniform_newton_sufficient_cond}
\begin{enumerate}
	\item \textbf{Global Regularity:}  Let Assumptions~\eqref{F_Lip} and~\eqref{strong_convex_boundedness}  hold and consider any $0 < \rho_{0} < \rho < 1$. Using Algorithm~\ref{alg1} with 
\begin{equation*}
\epsilon \leq \frac{\rho_{0} }{1 + \rho_{0}},
\end{equation*}
if
\begin{equation}
\|\xx^{(0)} - \xx^{*}\| \leq \frac{\rho-\rho_{0}}{\xi},
\label{initial_cond}
\end{equation}
with $\xi$ as in Theorem~\ref{uniform_newton_convergence}, we get locally Q-linear convergence
\begin{equation}
\|\xx^{(k)} - \xx^{*}\| \leq \rho \|\xx^{(k-1)} - \xx^{*}\|, \quad k = 1,\ldots, k_{0} 
\label{lin}
\end{equation} 
with probability $(1-\delta)^{k_{0}}$. 
	\item \textbf{Local Regularity:} Under Assumptions~\eqref{F_Lip},~\eqref{strong_convex_boundedness_opt}, Theorem~\ref{uniform_newton_sufficient_cond} holds with
\begin{equation*}
\epsilon \leq \frac{\rho_{0} }{2 + \rho_{0}}.
\end{equation*}
\end{enumerate}
\end{theorem}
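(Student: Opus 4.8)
The plan is to treat Theorem~\ref{uniform_newton_sufficient_cond} as an essentially deterministic consequence of the error recursion in Theorem~\ref{uniform_newton_convergence}, layered on top of a short probability-bookkeeping step. First I would observe that the prescribed bound $\epsilon \le \rho_{0}/(1+\rho_{0})$ is precisely what forces the linear coefficient of the recursion to be controlled by the \emph{target} rate: the coefficient produced by Theorem~\ref{uniform_newton_convergence} equals $\epsilon/(1-\epsilon)$, which is increasing in $\epsilon$, so the stated bound gives $\epsilon/(1-\epsilon) \le \rho_{0}$. Hence, on the event that~\eqref{convergence_subsampled_hessian} holds at step $k$, we may write $\|\xx^{(k+1)} - \xx^{*}\| \le \rho_{0}\|\xx^{(k)} - \xx^{*}\| + \xi\|\xx^{(k)} - \xx^{*}\|^{2}$, with $\xi$ as in Theorem~\ref{uniform_newton_convergence}.

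The heart of the argument is then an elementary induction showing that the quadratic term is absorbed into the linear rate so long as the iterates remain in the ball of radius $(\rho-\rho_{0})/\xi$ guaranteed by the initial condition~\eqref{initial_cond}. Concretely, if $\|\xx^{(k)} - \xx^{*}\| \le (\rho-\rho_{0})/\xi$, then multiplying $\xi\|\xx^{(k)} - \xx^{*}\| \le \rho-\rho_{0}$ by $\|\xx^{(k)} - \xx^{*}\|$ gives $\xi\|\xx^{(k)} - \xx^{*}\|^{2} \le (\rho-\rho_{0})\|\xx^{(k)} - \xx^{*}\|$, and adding the linear contribution yields $\|\xx^{(k+1)} - \xx^{*}\| \le \rho\|\xx^{(k)} - \xx^{*}\|$, which is exactly~\eqref{lin}. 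The key closure observation is that, because $\rho < 1$, this contraction also shrinks the iterate, so $\|\xx^{(k+1)} - \xx^{*}\| \le \rho(\rho-\rho_{0})/\xi < (\rho-\rho_{0})/\xi$; the radius hypothesis is therefore self-perpetuating, and the induction propagates from $\|\xx^{(0)} - \xx^{*}\| \le (\rho-\rho_{0})/\xi$ to every $k \le k_{0}$.

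Next I would handle the probability. Since a fresh sample set $\mathcal{S}$ is drawn independently at each iteration, the event that~\eqref{convergence_subsampled_hessian} holds at iteration $k$ has probability at least $1-\delta$, and these events are mutually independent across $k = 0,\ldots,k_{0}-1$. Intersecting them, the recursion, and hence the full induction above, is valid at all $k_{0}$ steps simultaneously with probability at least $(1-\delta)^{k_{0}}$, which is the claimed confidence level. The local-regularity case is identical in structure: with the formulas $\rho_{0} = 2\epsilon/(1-\epsilon)$ and $\xi = 3L/((1-\epsilon)\gamma^{*})$ of Theorem~\ref{uniform_newton_convergence}, the condition $\epsilon \le \rho_{0}/(2+\rho_{0})$ again yields $2\epsilon/(1-\epsilon) \le \rho_{0}$, after which the same induction and probability argument apply verbatim.

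I expect the only genuine subtlety, rather than a true obstacle, to be the bookkeeping that the radius condition is preserved under the iteration, so that the recursion may be legitimately re-applied at each step; everything else is an elementary manipulation of the quadratic recursion together with independence of the per-iteration sampling.
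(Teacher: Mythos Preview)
Your approach matches the paper's: invoke Theorem~\ref{uniform_newton_convergence}, use the bound on $\epsilon$ to force the linear coefficient down to $\rho_{0}$, then run the induction showing that the radius $(\rho-\rho_{0})/\xi$ is invariant under the contraction. Two small corrections are worth making. First, the per-step success events are \emph{not} mutually independent, because the point $\xx^{(k)}$ at which the concentration is evaluated depends on all earlier samples; the paper instead uses the chain rule $\Pr(\cap_{k} A_{k}) = \prod_{k} \Pr(A_{k}\mid \cap_{i<k} A_{i})$ and the fact that each conditional probability is at least $1-\delta$ (since $\mathcal{S}^{(k)}$ is fresh and the sample-size bound is uniform in $\xx$), which yields the same $(1-\delta)^{k_{0}}$. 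Second, in the local-regularity case the recursion of Theorem~\ref{uniform_newton_convergence} carries the extra hypothesis~\eqref{initial_cond_spec}; the paper explicitly verifies that your preserved radius already implies it via $(\rho-\rho_{0})/\xi \le 1/\xi = (1-\epsilon)\gamma^{*}/(3L) \le (1-\epsilon)\gamma^{*}/(2L)$, and you should state this check rather than fold it into ``the same induction applies verbatim.''
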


In addition to Q-linear convergence, if the accuracy by which Hessian is sampled increases as the iterations progress, it is also possible to obtain Q-superlinear convergence. 

\begin{algorithm}
\caption{Superlinearly Convergent Newton with Hessian Sub-Sampling}
\begin{algorithmic}[1]
\STATE \textbf{Input:} $\xx^{(0)}$, $0 < \delta < 1$, $0 < \epsilon < 1$, $0 < \rho < 1$ 
\FOR {$k = 0,1,2, \cdots$ until termination} 
\STATE - Set $\epsilon^{(k)}$, for example as in Theorems~\ref{uniform_newton_sufficient_cond_2} or~\ref{uniform_newton_sufficient_cond_3}
\STATE - Set the sample size, $|\mathcal{S}^{(k)}|$, with $\epsilon^{(k)}$ and $\delta$ as described in Comment~\ref{sample_size_alg}
\STATE - Select a sample set, $\mathcal{S}^{(k)}$, of size $|\mathcal{S}^{(k)}|$ and $H(\xx^{(k)})$ as in~\eqref{subsampled_H}
\STATE - Update $\xx^{(k+1)}$ as in~\eqref{structural_update} with $H(\xx^{(k)})$ and $\alpha_{k}=1$
\ENDFOR
\end{algorithmic}
\label{alg1_superlinear}
\end{algorithm}

\begin{theorem}[Q-Superlinear Convergence of Algorithm~\ref{alg1_superlinear}: Geometric Growth]
\label{uniform_newton_sufficient_cond_2}
Let the respective assumptions of Theorem~\ref{uniform_newton_sufficient_cond} hold. Using Algorithm~\ref{alg1_superlinear}, with
\begin{equation*}
\epsilon^{(k)} = \rho^{k} \epsilon, \quad k = 0,1,\ldots,k_{0},
\end{equation*}
if $\xx^{(0)}$ satisfies~\eqref{initial_cond} with $\rho$, $\rho_{0}$, and $\xi^{(0)}$, we get locally Q-superlinear convergence
\begin{equation}
\|\xx^{(k)} - \xx^{*}\| \leq \rho^{k} \| \xx^{(k-1)} - \xx^{*}\|, \quad k = 1,\ldots, k_{0} 
\label{sup_lin}
\end{equation} 
with probability $(1-\delta)^{k_{0}}$, where $\xi^{(0)}$ is as in~\eqref{rho_xi} (or~\eqref{rho_xi_opt}) with $\epsilon^{(0)}$.
\end{theorem}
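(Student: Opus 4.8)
The plan is to argue by induction on $k$, applying at each step the one-step error recursion of Theorem~\ref{uniform_newton_convergence}, but now with the iteration-dependent sampling accuracy $\epsilon^{(k)} = \rho^{k}\epsilon$. The point to exploit is that the per-iteration constants inherit the geometric decay of $\epsilon^{(k)}$: writing $\rho_{0}^{(k)} = \epsilon^{(k)}/(1-\epsilon^{(k)})$ and $\xi^{(k)} = L/(2(1-\epsilon^{(k)})\gamma)$ for the recursion applied between $\xx^{(k)}$ and $\xx^{(k+1)}$, I would first record two elementary monotonicity bounds, $\rho_{0}^{(k)} \leq \rho^{k}\rho_{0}$ and $\xi^{(k)} \leq \xi^{(0)}$. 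The first follows because $t \mapsto t/(1-t)$ is increasing and $1-\rho^{k}\epsilon \geq 1-\epsilon$, so that $\rho_{0}^{(k)} = \rho^{k}\epsilon/(1-\rho^{k}\epsilon) \leq \rho^{k}\epsilon/(1-\epsilon) = \rho^{k}\rho_{0}^{(0)} \leq \rho^{k}\rho_{0}$, where the last step uses the accuracy constraint $\epsilon \leq \rho_{0}/(1+\rho_{0})$ from Theorem~\ref{uniform_newton_sufficient_cond}; the second is immediate from $1-\rho^{k}\epsilon \geq 1-\epsilon$.

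For the base case $k=1$, the recursion with $\epsilon^{(0)}=\epsilon$ gives $\|\xx^{(1)} - \xx^{*}\| \leq (\rho_{0} + \xi^{(0)}\|\xx^{(0)} - \xx^{*}\|)\,\|\xx^{(0)} - \xx^{*}\|$, and hypothesis~\eqref{initial_cond}, namely $\|\xx^{(0)} - \xx^{*}\| \leq (\rho-\rho_{0})/\xi^{(0)}$, forces the bracket to be at most $\rho$, exactly as in the proof of Theorem~\ref{uniform_newton_sufficient_cond}. For the inductive step I would assume $\|\xx^{(j)} - \xx^{*}\| \leq \rho^{j}\|\xx^{(j-1)} - \xx^{*}\|$ for all $j \leq k$. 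Since $\rho^{j} \leq \rho$, chaining these contractions gives the accumulated decay $\|\xx^{(k)} - \xx^{*}\| \leq \rho^{k}\|\xx^{(0)} - \xx^{*}\|$ (in fact $\rho^{k(k+1)/2}\|\xx^{(0)} - \xx^{*}\|$, but the weaker $\rho^{k}$ bound is all I need), which combined with~\eqref{initial_cond} yields the crucial estimate $\|\xx^{(k)} - \xx^{*}\| \leq \rho^{k}(\rho-\rho_{0})/\xi^{(0)}$.

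With these three bounds in hand, applying the recursion at step $k$ and substituting $\rho_{0}^{(k)} \leq \rho^{k}\rho_{0}$, $\xi^{(k)} \leq \xi^{(0)}$, and $\xi^{(0)}\|\xx^{(k)} - \xx^{*}\| \leq \rho^{k}(\rho-\rho_{0})$ gives
$$\|\xx^{(k+1)} - \xx^{*}\| \leq \big(\rho^{k}\rho_{0} + \rho^{k}(\rho-\rho_{0})\big)\,\|\xx^{(k)} - \xx^{*}\| = \rho^{k+1}\|\xx^{(k)} - \xx^{*}\|,$$
which closes the induction. The exact cancellation $\rho^{k}\rho_{0} + \rho^{k}(\rho-\rho_{0}) = \rho^{k+1}$ is precisely what singles out the geometric schedule $\epsilon^{(k)} = \rho^{k}\epsilon$ as the right choice. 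Finally, since the recursion of Theorem~\ref{uniform_newton_convergence} holds at each iteration with probability $1-\delta$ via the spectrum-preservation Lemma~\ref{hoeffding_lemma} applied to the independently drawn sample $\mathcal{S}^{(k)}$, and the $k_{0}$ samples are drawn independently, the intersection of these events---on which the entire induction is valid---has probability $(1-\delta)^{k_{0}}$.

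I expect the main obstacle to be bookkeeping rather than any single hard estimate: one must carefully track how $\rho_{0}^{(k)}$ and $\xi^{(k)}$ inherit the decay of $\epsilon^{(k)}$, and set up the telescoping so that the accumulated bound on $\|\xx^{(k)} - \xx^{*}\|$ is strong enough (the $\rho^{k}$ factor) to tame the quadratic term when it is fed back into the recursion. The identical argument handles the local-regularity case verbatim, replacing $\rho_{0}$ and $\xi^{(0)}$ by the constants in~\eqref{rho_xi_opt} and the accuracy constraint by $\epsilon \leq \rho_{0}/(2+\rho_{0})$.
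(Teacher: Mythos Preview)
Your proposal is correct and follows essentially the same approach as the paper's proof: induction on $k$ using the one-step recursion of Theorem~\ref{uniform_newton_convergence}, together with the two monotonicity bounds $\rho_{0}^{(k)} \leq \rho^{k}\rho_{0}$ and $\xi^{(k)} \leq \xi^{(0)}$, and the accumulated decay $\|\xx^{(k)} - \xx^{*}\| \leq \rho^{k}(\rho-\rho_{0})/\xi^{(0)}$ from the inductive hypothesis to close the step exactly as you wrote. The only small item the paper adds that you leave implicit is, in the local-regularity case, an explicit check that the iterates stay inside the ball~\eqref{initial_cond_spec} (this follows from $\|\xx^{(k)} - \xx^{*}\| \leq (\rho-\rho_{0})/\xi^{(0)} \leq \gamma^{*}(1-\epsilon^{(0)})/(3L) \leq \gamma^{*}(1-\epsilon^{(k)})/(2L)$), so that Theorem~\ref{uniform_newton_convergence} is applicable at every step.
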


It is even possible to obtain Q-superlinear rate with a very mild growth of the sample size. For example, we can consider a growth which is only \textit{logarithmic} with the iteration counter.

\begin{theorem}[Q-Superlinear Convergence of Algorithm~\ref{alg1_superlinear}: Slow Growth]
\label{uniform_newton_sufficient_cond_3}
\begin{enumerate}
	\item \textbf{Global Regularity:} Let Assumptions~\eqref{F_Lip} and~\eqref{strong_convex_boundedness}  hold. Using Algorithm~\ref{alg1_superlinear} with 
\begin{equation*}
\epsilon^{(k)} = \frac{1}{1 + 2 \ln(4+k)}, \quad k = 0,1,\ldots, k_{0},
\end{equation*}
if $\xx^{(0)}$ satisfies
\begin{equation*}
\|\xx^{(0)} - \xx^{*}\| \leq \frac{2 \gamma }{\left(1 + 4\ln(2)\right) L},
\end{equation*}
we get locally Q-superlinear convergence
\begin{equation}
\|\xx^{(k)} - \xx^{*}\| \leq \frac{1}{\ln(3+k)} \| \xx^{(k-1)} - \xx^{*}\|, \quad k = 1,\ldots, k_{0} 
\label{sup_lin_slow}
\end{equation} 
with probability $(1-\delta)^{k_{0}}$.
\item \textbf{Local Regularity:} Under Assumptions~\eqref{F_Lip},~\eqref{strong_convex_boundedness_opt}, the result of theorem~\ref{uniform_newton_sufficient_cond_3} holds with 
\begin{equation*}
\epsilon^{(k)} = \frac{1}{1 + 4 \ln(4+k)}, \quad k = 0,1,\ldots, k_{0},
\end{equation*}
and $\xx^{(0)}$ which satisfies
\begin{equation*}
\|\xx^{(0)} - \xx^{*}\| \leq \frac{2 \gamma }{3 \left(1 + 8\ln(2)\right) L}.
\end{equation*}
\end{enumerate}
\end{theorem}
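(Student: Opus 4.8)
The plan is to prove both parts by induction on $k$, feeding the iteration-dependent accuracy $\epsilon^{(k)}$ into the one-step error recursion of Theorem~\ref{uniform_newton_convergence}. The step producing $\xx^{(k)}$ from $\xx^{(k-1)}$ uses $H(\xx^{(k-1)})$ sampled at accuracy $\epsilon^{(k-1)} = 1/(1+2\ln(3+k))$, so that recursion reads
\begin{equation*}
\|\xx^{(k)} - \xx^{*}\| \leq \left( \rho_{0}^{(k-1)} + \xi^{(k-1)} \|\xx^{(k-1)} - \xx^{*}\| \right) \|\xx^{(k-1)} - \xx^{*}\|,
\end{equation*}
with $\rho_{0}^{(k-1)} = \epsilon^{(k-1)}/(1-\epsilon^{(k-1)})$ and $\xi^{(k-1)} = L/(2(1-\epsilon^{(k-1)})\gamma)$. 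The first observation, which is the whole point of the chosen schedule, is that $\rho_{0}^{(k-1)} = 1/(2\ln(3+k))$, i.e.\ exactly \emph{half} the claimed rate $1/\ln(3+k)$ in~\eqref{sup_lin_slow}. Thus it suffices to control the quadratic contribution $\xi^{(k-1)} \|\xx^{(k-1)} - \xx^{*}\|$ by the remaining half $1/(2\ln(3+k))$.

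Substituting the explicit $\xi^{(k-1)} = L(1+2\ln(3+k))/(4\gamma\ln(3+k))$, that requirement is equivalent to the inductive invariant
\begin{equation*}
\|\xx^{(k-1)} - \xx^{*}\| \leq \frac{2\gamma}{L\,(1+2\ln(3+k))}.
\end{equation*}
For $k=1$ this is precisely the stated hypothesis $\|\xx^{(0)} - \xx^{*}\| \leq 2\gamma/((1+4\ln 2)L)$, since $1+2\ln 4 = 1+4\ln 2$; so the base case holds by assumption. Granting the invariant at index $k-1$, the recursion yields $\|\xx^{(k)} - \xx^{*}\| \leq (1/\ln(3+k))\|\xx^{(k-1)} - \xx^{*}\|$, which is~\eqref{sup_lin_slow} at step $k$; the bulk of the argument is then to verify that this new iterate satisfies the invariant \emph{one level up}, namely $\|\xx^{(k)} - \xx^{*}\| \leq 2\gamma/(L(1+2\ln(4+k)))$, so the induction can continue.

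The main obstacle is exactly this propagation step. Chaining the rate with the invariant at $k-1$ reduces it to the elementary logarithmic inequality
\begin{equation*}
1 + 2\ln(4+k) \leq \ln(3+k)\,\bigl(1 + 2\ln(3+k)\bigr), \qquad k \geq 1,
\end{equation*}
whose right-hand side grows like $2(\ln(3+k))^{2}$ against a left-hand side growing like $2\ln(4+k)$; I would dispatch it by checking the base case $k=1$ numerically and then a monotonicity/derivative argument for $k\geq 1$. The local-regularity case is entirely analogous: with $\epsilon^{(k)} = 1/(1+4\ln(4+k))$ and the constants $\rho_{0}=2\epsilon/(1-\epsilon)$, $\xi = 3L/((1-\epsilon)\gamma^{*})$ from~\eqref{rho_xi_opt}, one again finds $\rho_{0}^{(k-1)} = 1/(2\ln(3+k))$, the invariant becomes $\|\xx^{(k-1)}-\xx^{*}\| \leq 2\gamma^{*}/(3L(1+4\ln(3+k)))$ matching the stated initial condition, and the same type of logarithmic inequality closes the induction. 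Finally, since each one-step recursion holds with probability $1-\delta$ (the event that sub-sampling preserves the spectrum via Lemma~\ref{hoeffding_lemma}), a union bound over the $k_{0}$ iterations gives the overall probability $(1-\delta)^{k_{0}}$.
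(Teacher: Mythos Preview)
Your proposal is correct and follows essentially the same inductive argument as the paper: feed $\epsilon^{(k)}$ into Theorem~\ref{uniform_newton_convergence}, observe that $\rho_0^{(k-1)}=1/(2\ln(3+k))$ is exactly half the target rate, and propagate an invariant on $\|\xx^{(k-1)}-\xx^*\|$ via an elementary logarithmic inequality. The paper's only cosmetic differences are that it first coarsens $\xi^{(k-1)}\leq\xi^{(0)}$ and $\|\Delta_{k-1}\|\leq\|\Delta_0\|$, which leads to the (slightly stronger) inequality $1+2\ln(4+k)\leq(1+4\ln 2)\ln(3+k)$ dispatched via monotonicity of $\phi(x)=\ln(4+x)/\ln(3+x)$; also, the probability $(1-\delta)^{k_0}$ comes from chaining conditional success probabilities as in the proof of Theorem~\ref{uniform_newton_sufficient_cond}, not from a union bound.
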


\comment Note that since $1/\ln(3 + k) \leq 1/2$ for all $k \geq 5$, Theorem~\ref{uniform_newton_sufficient_cond_3} guarantees that after only a few iterations, we see a very fast local convergence rate with only logarithmic increase in Hessian estimation accuracy.

\comment Theorems~\ref{uniform_newton_sufficient_cond_2} and~\ref{uniform_newton_sufficient_cond_3} state that in order to obtain locally superlinear convergence, as we get closer to the optimal solution, the Hessian is required to be estimated more accurately. The rate at which this estimation accuracy is improved, in turn, directly determines that of the Q-superlinear convergence.

\subsection{Modifying the Sample Hessian}
\label{sec:hessian_modify}
As was discussed earlier, the composite behavior in the error recursion of Theorem~\ref{uniform_newton_convergence} indicates that in early stages of the algorithm, when the iterates are far from an optimum, the error is dominated by a quadratic term. Now it can be seen from~\eqref{convergence_subsampled_hessian} that the quadratic term is negatively affected by small values of $\gamma$, and unlike the linear term, the estimation accuracy $\epsilon$ cannot reverse the effect. In other words, small values of $\gamma$ can hinder the initial progress and even using the full Hessian cannot address this issue. As a result, one might resort to regularization of the (estimated) Hessian to improve upon the initial convergence rate. Here, we explore two options for such regularization and will discuss the pros and cons of such strategies. 

\subsubsection{Spectral Regularization}
\label{sec:spectral_reg}
In this section, we follow the ideas presented in~\cite{erdogdu2015convergence}, by accounting for such a potentially negative factor $\gamma$, through a regularization of eigenvalue distribution of the sub-sampled Hessian. 
More specifically, for some $\lambda \geq 0 $, let
\begin{subequations}
\label{spectral_proj}
\begin{equation}
\hat{H} \defeq \mathcal{P}(\lambda; H),
\end{equation}
where $\mathcal{P}(\lambda; H)$ is an operator which is defined as
\begin{equation}
\mathcal{P}(\lambda; H) \defeq \lambda \mathbb{I} + \arg\max_{ X \succeq 0} \|H - \lambda \mathbb{I} - X\|_{F}.
\end{equation}
\end{subequations}
The operation~\eqref{spectral_proj} can be equivalently represented as 
\begin{equation*}
\mathcal{P}(\lambda; H) = \sum_{i=1}^{p} \max \left \{ \lambda_{i}(H), \lambda \right\} \vv_{i} \vv_{i}^{T},
\end{equation*}
with $\vv_{i}$ being the $i^{th}$ eigenvector of $H$ corresponding to the $i^{th}$ eigenvalue, $\lambda_{i}(H)$. 
Operation~\eqref{spectral_proj} can be performed using truncated SVD (TSVD). Note that although TSVD can be done through standard methods, faster randomized alternatives exist which provide accurate approximations to TSVD much more efficiently~\cite{halko2011finding}. 
%
%
In the constrained optimization case, where the restricted eigenvalues of $H$ with respect to the cone $\mathcal{K}$ are not a priori known, it is impossible to compute $\lambda_{i}^{\mathcal{K}}(H)$. As a result, we replace Assumptions~\eqref{F_Lip},~\eqref{strong_convex_boundedness} and~\eqref{strong_convex_boundedness_opt}, with the corresponding assumptions in~\eqref{spectral_assumtpions}.

For this regularization, we have the following error recursion result:
\begin{theorem}[Error Recursion of~\eqref{structural_update}: Spectral Reg.\ with Global Regularity]
\label{spectral_hessian_convergence}
Let Assumptions~\eqref{F_Lip_spectral},~\eqref{smoothness_spectral}, and~\eqref{strong_conv_spectral} hold and let $0 < \delta < 1$ and $0< \epsilon < 1$ be given. Set $|\mathcal{S}|$ as described in Comment~\ref{sample_size_alg}, and $H(\xx^{(k)})$ as in~\eqref{subsampled_H}. For some $\lambda > 0$, let 
\begin{equation*}
\hat{H}(\xx^{(k)}) = \mathcal{P} \Big(\lambda; H(\xx^{(k)}) \Big), 
\end{equation*}
where $\mathcal{P}(\lambda; . )$ is as in~\eqref{spectral_proj}.
Then, for the update~\eqref{structural_update} with $\alpha_{k} = 1$, if $\lambda \geq (1-\epsilon) \gamma$, it follows that~\eqref{convergence_subsampled_hessian} holds with probability $1-\delta$
where
\begin{equation*}
\rho_{0} = \frac{\lambda - (1-\epsilon) \gamma +  \gamma\epsilon}{\lambda}, \quad \text{and} \quad \xi = \frac{L}{2 \lambda }.
\end{equation*} 
\end{theorem}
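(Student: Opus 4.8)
The plan is to run the same variational-inequality argument that underlies the unregularized recursion (Theorem~\ref{uniform_newton_convergence}), but with the regularized matrix $\hat{H}(\xx^{(k)})$ in the role of the scaling matrix, exploiting the fact that spectral regularization forces $\hat{H}(\xx^{(k)}) \succeq \lambda\mathbb{I}$ in the \emph{full} space. Write $\pp \defeq \xx^{(k+1)} - \xx^{*}$ and $\ee \defeq \xx^{(k)} - \xx^{*}$. First I would invoke the first-order optimality (variational) inequality for the constrained subproblem~\eqref{structural_update} at the feasible point $\xx^{*}$, i.e. $(\xx^{*}-\xx^{(k+1)})^{T}\big[\nabla F(\xx^{(k)}) + \hat{H}(\xx^{(k)})(\xx^{(k+1)}-\xx^{(k)})\big] \geq 0$, combine it with the optimality of $\xx^{*}$ (which gives $\pp^{T}\nabla F(\xx^{*}) \geq 0$ since $\xx^{(k+1)}$ is feasible) and the integral identity $\nabla F(\xx^{(k)}) - \nabla F(\xx^{*}) = \bar{H}\,\ee$ with $\bar{H} \defeq \int_{0}^{1}\nabla^{2}F(\xx^{*}+t\ee)\,dt$. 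Substituting $\xx^{(k+1)}-\xx^{(k)} = \pp - \ee$ and dropping the nonpositive term $-\pp^{T}\nabla F(\xx^{*})$ yields the key inequality $\pp^{T}\hat{H}(\xx^{(k)})\pp \leq \pp^{T}\big(\hat{H}(\xx^{(k)}) - \bar{H}\big)\ee$, exactly as in the unregularized case.

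Next I would bound the left-hand side from below using the defining property of the projection~\eqref{spectral_proj}: since $\mathcal{P}(\lambda;\cdot)$ replaces every eigenvalue of $H(\xx^{(k)})$ below $\lambda$ by $\lambda$ while preserving eigenvectors, we have $\hat{H}(\xx^{(k)}) \succeq \lambda\mathbb{I}$ and hence $\pp^{T}\hat{H}(\xx^{(k)})\pp \geq \lambda\|\pp\|^{2}$. This full-space positive-definiteness is precisely the reason the cone-restricted hypotheses are dropped here in favor of~\eqref{spectral_assumtpions}: the projection acts on the full eigendecomposition of $H(\xx^{(k)})$, so no usable statement about $\mathcal{K}$-restricted eigenvalues of $\hat{H}$ is available. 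Cauchy--Schwarz on the right-hand side then gives $\lambda\|\pp\| \leq \|\hat{H}(\xx^{(k)}) - \bar{H}\|\,\|\ee\|$, so the whole theorem reduces to controlling $\|\hat{H}(\xx^{(k)}) - \bar{H}\|$.

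The main work, and the step I expect to be the crux, is the triangle-inequality decomposition $\|\hat{H}(\xx^{(k)}) - \bar{H}\| \leq \|\hat{H}(\xx^{(k)}) - H(\xx^{(k)})\| + \|H(\xx^{(k)}) - \nabla^{2}F(\xx^{(k)})\| + \|\nabla^{2}F(\xx^{(k)}) - \bar{H}\|$ and the bounding of its three pieces. The third term is handled by the Lipschitz-Hessian assumption~\eqref{F_Lip_spectral}: $\|\nabla^{2}F(\xx^{(k)}) - \bar{H}\| \leq \int_{0}^{1} L(1-t)\|\ee\|\,dt = \tfrac{L}{2}\|\ee\|$. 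The middle (sampling) term is controlled by the concentration result underlying Lemma~\ref{hoeffding_lemma}: with $|\mathcal{S}|$ chosen as in Comment~\ref{sample_size_alg} in its full-space form, one has $\|H(\xx^{(k)}) - \nabla^{2}F(\xx^{(k)})\| \leq \epsilon\gamma$ with probability $1-\delta$, which in turn forces $\lambda_{\min}(H(\xx^{(k)})) \geq (1-\epsilon)\gamma$. The first (projection) term is where the hypothesis $\lambda \geq (1-\epsilon)\gamma$ enters: because $\hat{H}(\xx^{(k)})$ and $H(\xx^{(k)})$ are simultaneously diagonalizable, $\|\hat{H}(\xx^{(k)}) - H(\xx^{(k)})\| = \max\{0,\ \lambda - \lambda_{\min}(H(\xx^{(k)}))\} \leq \lambda - (1-\epsilon)\gamma$, the bound being nonnegative exactly under the stated assumption on $\lambda$.

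Summing the three bounds gives $\|\hat{H}(\xx^{(k)}) - \bar{H}\| \leq \big(\lambda - (1-\epsilon)\gamma + \epsilon\gamma\big) + \tfrac{L}{2}\|\ee\|$; inserting this into $\lambda\|\pp\| \leq \|\hat{H}(\xx^{(k)}) - \bar{H}\|\,\|\ee\|$ and dividing by $\lambda$ produces~\eqref{convergence_subsampled_hessian} with precisely $\rho_{0} = \big(\lambda - (1-\epsilon)\gamma + \gamma\epsilon\big)/\lambda$ and $\xi = L/(2\lambda)$, on the sub-sampling event of probability $1-\delta$. The genuinely delicate points are therefore two: first, that the sub-sampling guarantee is used in its additive operator-norm form $\|H(\xx^{(k)}) - \nabla^{2}F(\xx^{(k)})\| \leq \epsilon\gamma$ (the intermediate estimate behind the multiplicative eigenvalue statement of Lemma~\ref{hoeffding_lemma}), rather than the eigenvalue bound itself, since $\hat{H}$ and $\nabla^{2}F$ need not share eigenvectors; and second, that the threshold condition $\lambda \geq (1-\epsilon)\gamma$ is what keeps the projection-error contribution nonnegative and controls the size of $\rho_{0}$, clarifying how $\lambda$ trades off the constant term against the quadratic coefficient $\xi$.
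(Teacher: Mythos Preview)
Your proposal is correct and follows essentially the same route as the paper: the paper's proof invokes the same variational-inequality structural lemma (Lemma~\ref{structural_lemma_eig}) with $\hat{H}(\xx^{(k)})$ in place of $H(\xx^{(k)})$, uses $\lambda_{\min}(\hat{H}(\xx^{(k)}))\geq\lambda$ for the lower bound, and writes $\hat{H}(\xx^{(k)}) = \big(\hat{H}(\xx^{(k)}) - H(\xx^{(k)})\big) + H(\xx^{(k)})$ to obtain exactly your three-term triangle-inequality decomposition with the bounds $\lambda-(1-\epsilon)\gamma$, $\epsilon\gamma$, and $\tfrac{L}{2}\|\ee\|$. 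Your two ``delicate points'' are precisely the ingredients the paper uses (via Lemma~\ref{hoeffding_lemma_2}'s additive bound~\eqref{hoeffding_absolute_error} and the full-space assumptions~\eqref{spectral_assumtpions}), so there is nothing to add.
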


Now we consider the local regularity counterparts of Assumptions~\eqref{smoothness_spectral} and~\eqref{strong_conv_spectral} by replacing them with~\eqref{smoothness_spectral_opt} and~\eqref{strong_conv_spectral_opt}. Under such relaxed assumptions, we have the following convergence result:

\begin{theorem}[Error Recursion of~\eqref{structural_update}: Spectral Reg.\ with Local Regularity]
\label{spectral_hessian_convergence_relaxed}
Under Assumptions~\eqref{F_Lip_spectral},~\eqref{smoothness_spectral_opt}, and~\eqref{strong_conv_spectral_opt} hold, if $\lambda \geq (1-\epsilon) \gamma^{*}$, the result of Theorem~\ref{spectral_hessian_convergence} holds with
\begin{equation*}
\rho_{0} = \frac{\lambda - (1-\epsilon) \gamma^{*} +  \gamma^{*} \epsilon}{\lambda}, \quad \text{and} \quad \xi = \frac{(\sqrt{p} + 1/2) L}{\lambda }.
\end{equation*} 
\end{theorem}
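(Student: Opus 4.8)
The plan is to follow the proof of Theorem~\ref{spectral_hessian_convergence} essentially verbatim and to isolate the two places where localizing the regularity assumptions forces a change: the strong-convexity constant $\gamma$ is replaced by its value $\gamma^{*}$ at $\xx^{*}$, and---since the sub-sampling concentration can now only be invoked at $\xx^{*}$ rather than at the current iterate $\xx^{(k)}$---an extra Lipschitz ``bridging'' term appears in the quadratic coefficient. Writing $\Delta_{k} := \xx^{(k)} - \xx^{*}$ and $\bar{H} := \int_{0}^{1} \nabla^{2} F(\xx^{*} + t\Delta_{k})\,dt$, I would first derive the basic one-step inequality. Adding the variational inequality characterizing the minimizer $\xx^{(k+1)}$ of~\eqref{structural_update} (tested against $\xx^{*}$) to the first-order optimality condition at $\xx^{*}$ (tested against $\xx^{(k+1)}$), and using $\nabla F(\xx^{(k)}) - \nabla F(\xx^{*}) = \bar{H}\Delta_{k}$, yields
\begin{equation*}
\Delta_{k+1}^{T}\, \hat{H}(\xx^{(k)})\, \Delta_{k+1} \le \Delta_{k+1}^{T}\big(\hat{H}(\xx^{(k)}) - \bar{H}\big)\Delta_{k}.
\end{equation*}
Since $\hat{H}(\xx^{(k)}) = \mathcal{P}(\lambda; H(\xx^{(k)})) \succeq \lambda \mathbb{I}$, the left side is at least $\lambda \|\Delta_{k+1}\|^{2}$; applying Cauchy--Schwarz to the right side and cancelling one factor of $\|\Delta_{k+1}\|$ (the case $\Delta_{k+1}=0$ being trivial) reduces everything to bounding the operator norm $\|\hat{H}(\xx^{(k)}) - \bar{H}\|$.

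Next I would split this norm into three pieces,
\begin{equation*}
\hat{H}(\xx^{(k)}) - \bar{H} = \underbrace{\big(\hat{H}(\xx^{*}) - \nabla^{2} F(\xx^{*})\big)}_{\text{constant}} + \underbrace{\big(\hat{H}(\xx^{(k)}) - \hat{H}(\xx^{*})\big)}_{\text{bridge}} + \underbrace{\big(\nabla^{2} F(\xx^{*}) - \bar{H}\big)}_{\text{Lipschitz}},
\end{equation*}
and bound each in turn. For the constant piece, exactly as in the proof of Theorem~\ref{spectral_hessian_convergence}, the matrix concentration underlying Lemma~\ref{hoeffding_lemma} (in its full-space form appropriate to Assumptions~\eqref{spectral_assumtpions}, with the sample size of Comment~\ref{sample_size_alg} using $K^{*},\gamma^{*}$) gives $\|H(\xx^{*}) - \nabla^{2} F(\xx^{*})\| \le \epsilon\gamma^{*}$ with probability $1-\delta$, whence $\lambda_{\min}(H(\xx^{*})) \ge (1-\epsilon)\gamma^{*}$; since $\mathcal{P}(\lambda;\cdot)$ only lifts eigenvalues below $\lambda$ up to $\lambda$, this forces $\|\hat{H}(\xx^{*}) - H(\xx^{*})\| \le \lambda - (1-\epsilon)\gamma^{*}$ (the hypothesis $\lambda \ge (1-\epsilon)\gamma^{*}$ keeping this nonnegative), and a triangle inequality gives $\|\hat{H}(\xx^{*}) - \nabla^{2} F(\xx^{*})\| \le \lambda - (1-\epsilon)\gamma^{*} + \epsilon\gamma^{*} = \rho_{0}\lambda$. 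The Lipschitz piece is controlled by~\eqref{F_Lip_spectral}: since $\xx^{*} + t\Delta_{k}$ lies within distance $t\|\Delta_{k}\|$ of $\xx^{*}$, integrating yields $\|\nabla^{2} F(\xx^{*}) - \bar{H}\| \le \tfrac{L}{2}\|\Delta_{k}\|$. Together with the bound on the bridge term derived below, collecting the three estimates produces $\|\Delta_{k+1}\| \le \rho_{0}\|\Delta_{k}\| + \xi\|\Delta_{k}\|^{2}$, which is exactly~\eqref{convergence_subsampled_hessian} with the claimed $\rho_{0}$ and $\xi$.

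The crux---and the only genuinely new step relative to the global proof---is the bridge term $\hat{H}(\xx^{(k)}) - \hat{H}(\xx^{*})$, and it is also where the factor $\sqrt{p}$ enters. The key observation is that $\mathcal{P}(\lambda;\cdot)$ is the Euclidean (Frobenius) projection onto the closed convex set $\{X = X^{T} : X \succeq \lambda\mathbb{I}\}$, hence non-expansive in the Frobenius norm:
\begin{equation*}
\|\hat{H}(\xx^{(k)}) - \hat{H}(\xx^{*})\| \le \|\hat{H}(\xx^{(k)}) - \hat{H}(\xx^{*})\|_{F} \le \|H(\xx^{(k)}) - H(\xx^{*})\|_{F}.
\end{equation*}
Expanding $H$ as an average over $\mathcal{S}$, applying~\eqref{F_Lip_spectral} term by term, and using $\|A\|_{F} \le \sqrt{p}\,\|A\|$ for each $p\times p$ summand gives $\|H(\xx^{(k)}) - H(\xx^{*})\|_{F} \le \sqrt{p}\,L\|\Delta_{k}\|$, which contributes the $\sqrt{p}\,L$ part of $\xi = (\sqrt{p}+1/2)L/\lambda$. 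I expect this bridging estimate to be the main obstacle: one must route the concentration through $\xx^{*}$ (the only point where strong convexity is assumed), pay a Frobenius-to-spectral conversion in order to exploit non-expansiveness of the thresholding operator, and verify that $\mathcal{P}(\lambda;\cdot)$ really is a metric projection so that non-expansiveness legitimately applies. Everything else is a faithful transcription of the global argument with $\gamma$ replaced by $\gamma^{*}$, and the stated probability $1-\delta$ is inherited from the single invocation of the concentration bound at $\xx^{*}$.
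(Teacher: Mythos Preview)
Your proposal is correct and follows essentially the same route as the paper: the same three-term splitting (bridge, constant, Lipschitz), the same use of $\hat{H}(\xx^{(k)})\succeq\lambda\mathbb{I}$ to lower-bound the left side by $\lambda\|\Delta_{k+1}\|^{2}$, and the same Frobenius-to-spectral $\sqrt{p}$ conversion to control the bridge term. The only noticeable difference is in how Frobenius non-expansiveness of $\mathcal{P}(\lambda;\cdot)$ is justified: you invoke that $\mathcal{P}(\lambda;\cdot)$ is the metric (Frobenius) projection onto the closed convex set $\{X=X^{T}: X\succeq\lambda\mathbb{I}\}$, whereas the paper appeals to a functional-calculus lemma (Bhatia, Lemma~VII.5.5) stating that a $1$-Lipschitz scalar function---here $t\mapsto\max(t,\lambda)$---induces a Frobenius $1$-Lipschitz map on symmetric matrices. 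Both arguments are valid and yield the identical bound $\|\hat{H}(\xx^{(k)})-\hat{H}(\xx^{*})\|_{F}\le\|H(\xx^{(k)})-H(\xx^{*})\|_{F}$.
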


\comment The results of Theorems~\ref{spectral_hessian_convergence} and~\ref{spectral_hessian_convergence_relaxed} indicate that, by increasing the threshold $\lambda$, we get a better factor for the quadratic term, but a worse factor for the linear term. This is specially important in light of the fact that, without regularization and by Theorem~\ref{uniform_newton_convergence}, linear term only depends on sub-sampling accuracy which can be arbitrarily made small. Hence, it might be advisable to have larger threshold, $\lambda$, when far from the solution and gradually decrease the threshold, as iterates get closer to the optimum. This remedies the slow initial progress and yet allows for the sub-sampling dependent linear rate to be recovered when close enough to the solution.

\begin{algorithm}
\caption{Newton with Hessian Sub-Sampling and Spectral Regularization}
\begin{algorithmic}[1]
\STATE \textbf{Input:} $\xx^{(0)}$, $0 < \delta < 1$, $0 < \epsilon < 1$, and $0 < \epsilon_{0} < 1$ 
\STATE - Set the sample size, $|\mathcal{S}_{0}|$, with $\epsilon_{0}$ and $\delta$ as described in Comment~\ref{sample_size_alg}
\STATE - Set the sample size, $|\mathcal{S}|$, with $\epsilon$ and $\delta$ as described in Comment~\ref{sample_size_alg}
\FOR {$k = 0,1,2, \cdots$ until termination} 
\STATE - Select a sample set, $\mathcal{S}_{0}$, of size $|\mathcal{S}_{0}|$ and form $H_{0}(\xx^{(k)})$ as in~\eqref{subsampled_H}
\STATE - Compute $\lambda_{\min} \left(H_{0}(\xx^{(k)})\right)$ 
\STATE - Set the threshold, $\lambda^{(k)}$ as in Theorem~\ref{spectral_hessian_suff} (or Theorem~\ref{spectral_hessian_suff_relaxed})
\STATE - Select a sample set, $\mathcal{S}$, of size $|\mathcal{S}|$ and form $H(\xx^{(k)})$ as in~\eqref{subsampled_H}
\STATE - Form $\hat{H}(\xx^{(k)})$ as in~\eqref{spectral_proj} with $H(\xx^{(k)})$ and $\lambda^{(k)}$
\STATE - Update $\xx^{(k+1)}$ as in~\eqref{structural_update} with $\hat{H}(\xx^{(k)})$ and $\alpha_{k} = 1$
\ENDFOR
\end{algorithmic}
\label{alg_spectral}
\end{algorithm}

\begin{theorem}[Q-Linear Convergence of Algorithm~\ref{alg_spectral}: Global Regularity]
\label{spectral_hessian_suff}
Let Assumptions~\eqref{F_Lip_spectral},~\eqref{smoothness_spectral}, and~\eqref{strong_conv_spectral} hold. Using Algorithm~\ref{alg_spectral}, if 
\begin{align}
\|\xx^{(0)} - \xx^{*}\| &< \frac{\gamma}{3 L}, \label{initial_cond_spectral} \\
\epsilon &\leq \frac{1 }{6}, \label{epsilon_spectral} 
\end{align}
and at every iteration the threshold is chosen as 
\begin{equation}
\lambda^{(k)} \geq \left(\frac{1-\epsilon}{1-\epsilon_{0}}\right) \lambda_{\min} \left(H_{0}(\xx^{(k)})\right),
\label{spectral_threshold}
\end{equation}
we get locally Q-linear convergence with variable rates
\begin{equation*}
\|\xx^{(k)} - \xx^{*}\| \leq \rho^{(k-1)} \|\xx^{(k-1)} - \xx^{*}\|, \quad k = 1,\ldots, k_{0} 
\end{equation*} 
with probability $(1-\delta)^{2 k_{0}}$, where
\begin{equation*}
\rho^{(k)} = 1 - \frac{\gamma}{2 \lambda^{(k)}}.
\end{equation*}
\end{theorem}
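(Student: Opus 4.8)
The plan is to reduce the claim to the per-step error recursion already established in Theorem~\ref{spectral_hessian_convergence}, and then feed in the algorithm's adaptive threshold. The two independent sample sets drawn each iteration, $\mathcal{S}_{0}$ and $\mathcal{S}$, play distinct roles: $\mathcal{S}_{0}$ is used only to estimate $\lambda_{\min}$ and thereby calibrate the threshold $\lambda^{(k)}$, while $\mathcal{S}$ builds the Hessian approximation that drives the actual Newton step. The key realization is that the threshold rule~\eqref{spectral_threshold} is engineered precisely so that $\lambda^{(k)}$ clears the bar $(1-\epsilon)\gamma$ required to invoke Theorem~\ref{spectral_hessian_convergence}.

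First I would apply the sub-sampling lemma (in its spectral, unconstrained form, since Section~\ref{sec:spectral_reg} drops the $\mathcal{K}$-restriction) with accuracy $\epsilon_{0}$ to the sample $\mathcal{S}_{0}$. With probability $1-\delta$, this and Assumption~\eqref{strong_conv_spectral} give $\lambda_{\min}\bigl(H_{0}(\xx^{(k)})\bigr) \geq (1-\epsilon_{0})\gamma$. Combined with the rule~\eqref{spectral_threshold}, this yields $\lambda^{(k)} \geq \bigl(\tfrac{1-\epsilon}{1-\epsilon_{0}}\bigr)(1-\epsilon_{0})\gamma = (1-\epsilon)\gamma$, exactly the hypothesis of Theorem~\ref{spectral_hessian_convergence}. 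Next, using $\mathcal{S}$ (a second, independent sampling succeeding with probability $1-\delta$), that theorem supplies the recursion $\|\xx^{(k+1)}-\xx^{*}\| \leq \rho_{0}^{(k)}\|\xx^{(k)}-\xx^{*}\| + \xi^{(k)}\|\xx^{(k)}-\xx^{*}\|^{2}$, where a short simplification gives $\rho_{0}^{(k)} = 1 - (1-2\epsilon)\gamma/\lambda^{(k)}$ and $\xi^{(k)} = L/(2\lambda^{(k)})$.

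The crux is then to show the composite factor $\rho_{0}^{(k)} + \xi^{(k)}\|\xx^{(k)}-\xx^{*}\|$ is dominated by the claimed rate $\rho^{(k)} = 1 - \gamma/(2\lambda^{(k)})$ whenever the iterate lies in the basin $\|\xx^{(k)}-\xx^{*}\| \leq \gamma/(3L)$. Multiplying the target inequality through by $\lambda^{(k)} > 0$, the $\lambda^{(k)}$ terms cancel and it collapses to $\tfrac{L}{2}\|\xx^{(k)}-\xx^{*}\| \leq \gamma(\tfrac{1}{2}-2\epsilon)$, i.e.\ $\|\xx^{(k)}-\xx^{*}\| \leq \gamma(1-4\epsilon)/L$; under the hypothesis~\eqref{epsilon_spectral} one has $1-4\epsilon \geq 1/3$, so the basin radius $\gamma/(3L)$ in~\eqref{initial_cond_spectral} is exactly what is needed. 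Note also $\lambda^{(k)} \geq (1-\epsilon)\gamma > \gamma/2$ forces $0 < \rho^{(k)} < 1$, so the step is a genuine contraction.

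Finally I would run an induction on $k$: because $\rho^{(k)} < 1$, each successful step strictly shrinks the error, so $\|\xx^{(k+1)}-\xx^{*}\| < \|\xx^{(k)}-\xx^{*}\| \leq \gamma/(3L)$ keeps the iterate in the basin and preserves the inductive hypothesis, with~\eqref{initial_cond_spectral} serving as the base case. The main obstacle is not any single estimate but the careful interlocking of the two-stage \emph{random} sampling with the deterministic contraction algebra: the adaptive threshold $\lambda^{(k)}$ is itself random (depending on $\lambda_{\min}(H_{0})$), yet on the good event it simultaneously validates Theorem~\ref{spectral_hessian_convergence} and delivers the advertised variable rate. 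For the probability bookkeeping, each iteration needs both $\mathcal{S}_{0}$ and $\mathcal{S}$ to succeed, contributing a factor $(1-\delta)^{2}$; with fresh independent samples across the $k_{0}$ iterations the overall success probability is $(1-\delta)^{2k_{0}}$, matching the claim.
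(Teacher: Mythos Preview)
Your proposal is correct and follows essentially the same approach as the paper's proof: use the first sample $\mathcal{S}_{0}$ together with the threshold rule to guarantee $\lambda^{(k)}\geq(1-\epsilon)\gamma$, invoke Theorem~\ref{spectral_hessian_convergence} on the second sample, and then run an induction keeping the iterates inside the ball of radius $\gamma/(3L)$, with the two independent samplings per step yielding the $(1-\delta)^{2k_{0}}$ probability. The only organizational difference is that the paper introduces an intermediate target $\rho_{0}^{(k)}=1-\tfrac{2\gamma}{3\lambda^{(k)}}$ and checks $(\rho^{(k)}-\rho_{0}^{(k)})/\xi^{(k)}=\gamma/(3L)$, whereas you collapse the contraction condition $\rho_{0}^{(k)}+\xi^{(k)}\|\xx^{(k)}-\xx^{*}\|\leq\rho^{(k)}$ directly to $\|\xx^{(k)}-\xx^{*}\|\leq\gamma(1-4\epsilon)/L$; both routes are algebraically equivalent and both pin down $\epsilon\leq 1/6$ as exactly the value that makes the basin radius equal $\gamma/(3L)$.
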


\begin{theorem}[Q-Linear Convergence of Algorithm~\ref{alg_spectral}: Local Regularity]
\label{spectral_hessian_suff_relaxed}
Let Assumptions~\eqref{F_Lip_spectral},~\eqref{smoothness_spectral_opt}, and~\eqref{strong_conv_spectral_opt} hold. Using Algorithm~\ref{alg_spectral}, if 
\begin{align}
\|\xx^{(0)} - \xx^{*}\| &< \frac{\gamma^{*}}{6 (\sqrt{p} + 1/2)L}, \label{initial_cond_spectral_relax} \\
\epsilon &\leq \frac{1}{6}, \label{epsilon_spectral_relax} 
\end{align}
and at every iteration the threshold is chosen as 
\begin{equation}
\lambda^{(k)} > \left(\frac{(6 \sqrt{p} + 3)(1-\epsilon)}{(6 \sqrt{p} + 2)(1-\epsilon_{0})} \right) \lambda_{\min} \left(H_{0}(\xx^{(k)})\right),
\label{spectral_threshold_relaxed}
\end{equation}
the result of Theorem~\ref{spectral_hessian_suff} holds with
\begin{equation*}
\rho^{(k)} = 1 - \frac{\gamma^{*}}{2 \lambda^{(k)}}.
\end{equation*}
\end{theorem}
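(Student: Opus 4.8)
The plan is to follow the template of the global-regularity proof of Theorem~\ref{spectral_hessian_suff}, but with the error recursion replaced by its local-regularity counterpart, Theorem~\ref{spectral_hessian_convergence_relaxed}. The argument is an induction on $k$ whose hypothesis is the ball condition $\|\xx^{(k)} - \xx^{*}\| \leq \gamma^{*}/\big(6(\sqrt{p}+1/2)L\big)$, which holds at $k=0$ by~\eqref{initial_cond_spectral_relax}. At each step I would establish a one-step contraction $\|\xx^{(k+1)} - \xx^{*}\| \leq \rho^{(k)}\|\xx^{(k)} - \xx^{*}\|$ with $\rho^{(k)} = 1 - \gamma^{*}/(2\lambda^{(k)}) < 1$; since this forces $\|\xx^{(k+1)} - \xx^{*}\| < \|\xx^{(k)} - \xx^{*}\|$, the ball condition propagates and the induction closes. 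Two independent sub-samples are drawn per iteration, namely $\mathcal{S}_0$ for the threshold and $\mathcal{S}$ for $H$, so the success events compound to probability $(1-\delta)^{2 k_{0}}$ over $k_{0}$ steps.

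The crux is verifying the hypothesis $\lambda^{(k)} \geq (1-\epsilon)\gamma^{*}$ under which Theorem~\ref{spectral_hessian_convergence_relaxed} is valid; this is exactly what the peculiar threshold~\eqref{spectral_threshold_relaxed} is engineered to guarantee. First I would invoke Lemma~\ref{hoeffding_lemma} for $H_0$ at accuracy $\epsilon_0$ (in the spectral setting $\mathcal{K}=\mathbb{R}^{p}$), giving $\lambda_{\min}(H_0(\xx^{(k)})) \geq (1-\epsilon_0)\lambda_{\min}(\nabla^{2} F(\xx^{(k)}))$ with probability $1-\delta$. Next, Weyl's inequality together with the averaged Lipschitz bound from~\eqref{F_Lip_spectral} and the local strong convexity~\eqref{strong_conv_spectral_opt} yields $\lambda_{\min}(\nabla^{2} F(\xx^{(k)})) \geq \gamma^{*} - L\|\xx^{(k)} - \xx^{*}\|$; feeding in the ball condition $\|\xx^{(k)} - \xx^{*}\| \leq \gamma^{*}/((6\sqrt{p}+3)L)$ gives $\lambda_{\min}(\nabla^{2} F(\xx^{(k)})) \geq \gamma^{*}(6\sqrt{p}+2)/(6\sqrt{p}+3)$. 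Substituting both bounds into~\eqref{spectral_threshold_relaxed}, the factor $(1-\epsilon_0)$ cancels against the denominator and $(6\sqrt{p}+2)/(6\sqrt{p}+3)$ cancels against the prefactor, leaving $\lambda^{(k)} > (1-\epsilon)\gamma^{*}$, as required.

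With the recursion hypothesis secured, I would apply Theorem~\ref{spectral_hessian_convergence_relaxed} to obtain $\|\xx^{(k+1)}-\xx^{*}\| \leq \rho_0^{(k)}\|\xx^{(k)}-\xx^{*}\| + \xi^{(k)}\|\xx^{(k)}-\xx^{*}\|^{2}$ with $\rho_0^{(k)} = 1 - (1-2\epsilon)\gamma^{*}/\lambda^{(k)}$ and $\xi^{(k)} = (\sqrt{p}+1/2)L/\lambda^{(k)}$. Factoring out $\|\xx^{(k)}-\xx^{*}\|$, it then suffices to show $\rho_0^{(k)} + \xi^{(k)}\|\xx^{(k)}-\xx^{*}\| \leq \rho^{(k)}$. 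After clearing the common denominator $\lambda^{(k)} > 0$, this reduces to $(\sqrt{p}+1/2)L\|\xx^{(k)}-\xx^{*}\| \leq (1/2 - 2\epsilon)\gamma^{*}$; the hypothesis $\epsilon \leq 1/6$ makes the right-hand side at least $\gamma^{*}/6$, and the ball condition makes the left-hand side at most $\gamma^{*}/6$, closing the estimate and giving the claimed contraction.

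I expect the main obstacle to be the bookkeeping in the threshold verification of the second paragraph: the constants $6\sqrt{p}+3$ and $6\sqrt{p}+2$ in~\eqref{spectral_threshold_relaxed} and~\eqref{initial_cond_spectral_relax} must line up exactly with the $(\sqrt{p}+1/2)$ appearing in $\xi^{(k)}$ of Theorem~\ref{spectral_hessian_convergence_relaxed}, so there is essentially no slack to spare. The $\sqrt{p}$ itself originates from the local-regularity control of the spectral-projection operator $\mathcal{P}(\lambda;\cdot)$ across iterates, which is already absorbed into Theorem~\ref{spectral_hessian_convergence_relaxed}; I would therefore treat that theorem as a black box and concentrate the care on the exact cancellations and on tracking that both sub-sampling events, for $H_0$ and for $H$, hold simultaneously at every iteration so that the probability accounting delivers $(1-\delta)^{2 k_{0}}$.
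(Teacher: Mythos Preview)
Your proposal is correct and follows essentially the same route as the paper: verify $\lambda^{(k)} \geq (1-\epsilon)\gamma^{*}$ via Weyl's inequality, Lipschitz continuity, the ball condition, and the sub-sampling bound for $H_{0}$, then invoke Theorem~\ref{spectral_hessian_convergence_relaxed} and check the one-step contraction by the same arithmetic used in Theorem~\ref{spectral_hessian_suff} (the paper phrases it as $(\rho^{(k)}-\rho_{0}^{(k)})/\xi^{(k)} = \gamma^{*}/\big(6(\sqrt{p}+1/2)L\big)$, which is exactly your inequality $\rho_{0}^{(k)} + \xi^{(k)}\|\xx^{(k)}-\xx^{*}\| \leq \rho^{(k)}$ rearranged). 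The probability accounting and the propagation of the ball condition by induction are also identical.
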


\comment It is easy to see that as $\lambda$ gets larger, by Theorems~\ref{spectral_hessian_suff} and~\ref{spectral_hessian_suff_relaxed}, the algorithm behaves more like first-order gradient descent methods. In fact, in the case where $\lambda = K$, the algorithm~\eqref{structural_update} indeed is gradient descent with $H(\xx^{(k)}) = \mathbb{I}$ and the constant step size $\alpha_{k} = 1/K$. In such a case, Theorems~\ref{spectral_hessian_suff} and~\ref{spectral_hessian_suff_relaxed} give similar results as linear convergence of gradient descent method for (locally) smooth and strongly convex functions~\cite[Theorem 2.1.15]{nesterov2004introductory}.

\comment To get convergence, we only need a rough estimate of the smallest eigenvalue of $\nabla^{2}F(\xx^{(k)})$ at iteration $k$.  As a result, one can consider relatively large $\epsilon_{0}$ in Algorithm~\ref{alg_spectral} and have a small sample size for such rough estimation. 

\comment The results of Theorems~\ref{spectral_hessian_suff} and~\ref{spectral_hessian_suff_relaxed} indicate that, when close enough to the solution, increasing the threshold $\lambda$ slows down the convergence rate! Hence, more aggressive regularization might, in fact, adversarially affect the efficiency of the algorithm. As a result, it is only advisable to have large thresholds at early stages of the algorithm. For example, using Theorem~\ref{spectral_hessian_convergence} and any given $0< \xi_{0} < 1$, it can be shown that if $\lambda \geq \beta L/(2 \xi_{0})$, for some $\beta > 1$, then while 
\begin{equation*}
\|\xx^{(k)} - \xx^{*}\| \geq \frac{\beta L - 2\gamma \xi_{0} + 4\gamma\xi_{0} \epsilon }{(\beta - 1) L \xi_{0}},
\end{equation*}
we have quadratic convergence, i.e.,
\begin{equation*}
\|\xx^{(k+1)} - \xx^{*}\| \leq \xi_{0} \|\xx^{(k)} - \xx^{*}\|^{2}.
\end{equation*}

\subsubsection{Ridge Regularization}
\label{sec:ridge}
As an alternative to the spectral regularization, we can consider the following simple ridge-type regularization 
\begin{equation}
\hat{H}(\xx) \defeq H(\xx) + \lambda \mathbb{I},
\label{ridge}
\end{equation}
for some $\lambda \geq 0$, similar to  the Levenberg-Marquardt type algorithms~\cite{levenberg1944algorithm,marquardt1963algorithm}. Such regularization might be preferable to the spectral regularization of Section~\ref{sec:spectral_reg}, as it avoids the projection operation~\eqref{spectral_proj} at every iteration. In addition, in order to establish convergence in Section~\ref{sec:spectral_reg}, the values of the threshold have to be chosen with regards to the eigenvalue distributions of $\nabla^{2} F(\xx^{k})$ (or $\nabla^{2}F(\xx^{*})$). As a result of this undesirable coupling, in Section~\ref{sec:spectral_reg}, assumptions on the full eigenvalue distribution (as opposed to the ones restricted to the cone of the constraints) were unavoidable, and depending on the cone $\mathcal{K}$,  this can be significant (compare Assumptions~\eqref{spectral_assumtpions} with their respective counterparts in~\eqref{F_Lip},~\eqref{strong_convex_boundedness} and~\eqref{strong_convex_boundedness_opt}). However, here $\lambda$ can be chosen a priori without any restriction and also, if desired, can be kept fixed across all iterations.  As a result, we can continue to use Assumptions~\eqref{F_Lip},~\eqref{strong_convex_boundedness} and~\eqref{strong_convex_boundedness_opt}, and still acquire convergence. For such regularization, we have the following results:

\begin{algorithm}
\caption{Newton with Hessian Sub-Sampling and Ridge Regularization}
\begin{algorithmic}[1]
\STATE \textbf{Input:} $\xx^{(0)}$, $0 < \delta < 1$, $0 < \epsilon < 1$, $\lambda \geq 0$
\STATE - Set the sample size, $|\mathcal{S}|$, with $\epsilon$ and $\delta$ as described in Comment~\ref{sample_size_alg}
\FOR {$k = 0,1,2, \cdots$ until termination} 
\STATE - Select a sample set, $\mathcal{S}$, of size $|\mathcal{S}|$ and form $H(\xx^{(k)})$ as in~\eqref{subsampled_H}
\STATE - Form $\hat{H}(\xx^{(k)})$ as in~\eqref{ridge} with $H(\xx^{(k)})$ and $\lambda$
\STATE - Update $\xx^{(k+1)}$ as in~\eqref{structural_update} with $\hat{H}(\xx^{(k)})$ and $\alpha_{k} = 1$
\ENDFOR
\end{algorithmic}
\label{alg_ridge}
\end{algorithm}

\begin{theorem}[Error Recursion of~\eqref{structural_update}: Ridge Regularization]
\label{ridge_hessian_convergence}
\begin{enumerate}
	\item \textbf{Global Regularity:} Let Assumptions~\eqref{F_Lip} and~\eqref{strong_convex_boundedness}  hold, and let $0 < \delta < 1$ and $0< \epsilon < 1$ be given. Set $|\mathcal{S}|$ as described in Comment~\ref{sample_size_alg} and $H(\xx^{(k)})$ as in~\eqref{subsampled_H}. For some $\lambda \geq 0$, let $\hat{H}(\xx^{(k)})$ be as in~\eqref{ridge}. Then, for the update~\eqref{structural_update} with $\alpha_{k} = 1$, it follows that~\eqref{convergence_subsampled_hessian} holds with probability $1-\delta$
where
\begin{equation*}
\rho_{0} = \frac{\lambda +  \gamma	 \epsilon}{(1-\epsilon) \gamma + \lambda}, \quad
\xi = \frac{L}{2(1-\epsilon) \gamma + 2\lambda}.
\end{equation*}
\item \textbf{Local Regularity:} Under Assumptions~\eqref{F_Lip},~\eqref{strong_convex_boundedness_opt}, and~\eqref{initial_cond_spec}
the result of theorem~\ref{ridge_hessian_convergence} holds with 
\begin{equation*}
\rho_{0} = \frac{2 \lambda +  2 \gamma^{*} \epsilon}{(1-\epsilon) \gamma^{*} + 2\lambda}, \quad
\xi = \frac{3L}{(1-\epsilon) \gamma^{*} + 2\lambda}.
\end{equation*}
\end{enumerate}
\end{theorem}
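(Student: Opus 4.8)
The plan is to recognize that Theorem~\ref{ridge_hessian_convergence} is simply Theorem~\ref{uniform_newton_convergence} with the sub-sampled Hessian $H(\xx^{(k)})$ replaced throughout by the regularized matrix $\hat{H}(\xx^{(k)}) = H(\xx^{(k)}) + \lambda\mathbb{I}$; indeed, substituting $\lambda = 0$ into the claimed $\rho_0$ and $\xi$ recovers~\eqref{rho_xi} and~\eqref{rho_xi_opt} verbatim. So I would run the same argument and only re-bound the numerator deviation and the smallest $\mathcal{K}$-restricted eigenvalue of the regularized matrix, invoking the structural lemmas of Section~\ref{sec:strutural_lemmas_hess} or else rederiving them. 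First I would write the first-order (variational-inequality) optimality condition for the constrained minimizer $\xx^{(k+1)}$ of~\eqref{structural_update}: for all $\yy\in\mathcal{D}\cap\mathcal{X}$, $\langle \nabla F(\xx^{(k)}) + \hat{H}(\xx^{(k)})(\xx^{(k+1)}-\xx^{(k)}),\,\yy-\xx^{(k+1)}\rangle \ge 0$, which is well-posed since $\hat{H}$ is positive definite on $\mathcal{K}$ (as $\lambda_{\min}^{\mathcal{K}}(H)\ge(1-\epsilon)\gamma>0$ and $\lambda\ge 0$). Testing at $\yy=\xx^{*}$ and adding the optimality inequality $\langle\nabla F(\xx^{*}),\,\xx^{(k+1)}-\xx^{*}\rangle\ge 0$ of $\xx^{*}$, with $\ee^{(k)}=\xx^{(k)}-\xx^{*}$, yields $\langle\hat{H}\ee^{(k+1)},\ee^{(k+1)}\rangle \le \langle(\hat{H}-\bar{H}_k)\ee^{(k)},\ee^{(k+1)}\rangle$, where $\bar{H}_k=\int_0^1\nabla^2 F(\xx^{*}+t\ee^{(k)})\,dt$ is the mean-value Hessian coming from $\nabla F(\xx^{(k)})-\nabla F(\xx^{*})=\bar{H}_k\ee^{(k)}$.

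The crucial structural point is that, since $\mathcal{D}\cap\mathcal{X}$ is convex, both $\ee^{(k)}$ and $\ee^{(k+1)}$ are feasible directions at $\xx^{*}$ and hence lie in the tangent cone $\mathcal{K}$. This lets me lower-bound the left side by $\lambda_{\min}^{\mathcal{K}}(\hat{H})\,\|\ee^{(k+1)}\|^2$ and upper-bound the right side by $\|\hat{H}-\bar{H}_k\|_{\mathcal{K}}\,\|\ee^{(k)}\|\,\|\ee^{(k+1)}\|$ using~\eqref{lambda_min_cone} and~\eqref{norm_cone_mat}; dividing through gives the master recursion $\|\ee^{(k+1)}\|\le \|\hat{H}-\bar{H}_k\|_{\mathcal{K}}\,\|\ee^{(k)}\|/\lambda_{\min}^{\mathcal{K}}(\hat{H})$. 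I would then split $\hat{H}-\bar{H}_k=(H-\nabla^2 F(\xx^{(k)}))+\lambda\mathbb{I}+(\nabla^2 F(\xx^{(k)})-\bar{H}_k)$ and bound the three pieces: the first by $\epsilon\gamma$ via Lemma~\ref{hoeffding_lemma} (Weyl's inequality turns the relative eigenvalue guarantee~\eqref{spectrum_preserve_epsilon} into $\|H-\nabla^2 F\|_{\mathcal{K}}\le\epsilon\gamma$); the second by $\lambda$ since $\|\mathbb{I}\|_{\mathcal{K}}=1$; and the third by $\tfrac{L}{2}\|\ee^{(k)}\|$ from~\eqref{F_Lip} integrated over the segment. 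For the denominator, $\lambda_{\min}^{\mathcal{K}}(\hat{H})=\lambda_{\min}^{\mathcal{K}}(H)+\lambda\ge(1-\epsilon)\gamma+\lambda$. Substituting reproduces exactly $\rho_0=(\lambda+\gamma\epsilon)/((1-\epsilon)\gamma+\lambda)$ and $\xi=L/(2(1-\epsilon)\gamma+2\lambda)$, holding with probability $1-\delta$ from the single invocation of Lemma~\ref{hoeffding_lemma} at $\xx^{(k)}$.

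For the local-regularity case the skeleton is identical, but because $K(\cdot),\gamma(\cdot)$ are controlled only at $\xx^{*}$, I cannot apply Lemma~\ref{hoeffding_lemma} directly at $\xx^{(k)}$. Instead I would apply it at $\xx^{*}$ (with sample size~\eqref{uniform_sample_size_Hoeffding_alg_opt}) to obtain $\|H(\xx^{*})-\nabla^2 F(\xx^{*})\|_{\mathcal{K}}\le\epsilon\gamma^{*}$, and then transfer both the deviation bound and the eigenvalue lower bound from $\xx^{*}$ to $\xx^{(k)}$ by the triangle inequality together with~\eqref{F_Lip}, which introduces an extra $O(L\|\ee^{(k)}\|)$ in the numerator and subtracts an $O(L\|\ee^{(k)}\|)$ term inside the denominator $\lambda_{\min}^{\mathcal{K}}(\hat{H})$. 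This bookkeeping is the main obstacle: one must keep the denominator bounded safely below zero after these corrections and then re-fold the correction terms into the quadratic coefficient. The region-of-convergence hypothesis~\eqref{initial_cond_spec}, bounding $\|\ee^{(k)}\|$, is precisely what caps the subtracted term so that the recursion closes; carrying the resulting constants through yields the stated $\rho_0=(2\lambda+2\gamma^{*}\epsilon)/((1-\epsilon)\gamma^{*}+2\lambda)$ and $\xi=3L/((1-\epsilon)\gamma^{*}+2\lambda)$. I would treat the exact emergence of the factors $2$ and $3$ as a careful-but-routine accounting in the appendix, and I would finish with the consistency check that $\lambda=0$ returns the local constants~\eqref{rho_xi_opt} of Theorem~\ref{uniform_newton_convergence}.
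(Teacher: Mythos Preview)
Your proposal is correct and essentially matches the paper's proof: Part~1 is precisely Lemma~\ref{structural_lemma_eig} applied to $\hat H=H+\lambda\mathbb{I}$ (which you rederive rather than cite), and Part~2 follows Lemma~\ref{structural_lemma_eig_relax} with the concentration bound invoked at $\xx^{*}$. The only minor difference is that, to land exactly on the constant $3L$ in $\xi$, the paper splits the numerator around $\xx^{*}$ directly as $(\hat H(\xx^{(k)})-\hat H(\xx^{*}))+(\hat H(\xx^{*})-\nabla^2 F(\xx^{*}))+(\nabla^2 F(\xx^{*})-\bar H_k)$ rather than transferring the $\xx^{(k)}$-based deviation via a triangle inequality (which would pick up one extra $L\|\Delta_k\|$); after lower-bounding $\Delta_{k+1}^{T}\hat H(\xx^{(k)})\Delta_{k+1}\ge(\lambda+(1-\epsilon)\gamma^{*}-L\|\Delta_k\|)\|\Delta_{k+1}\|^2$ and invoking~\eqref{initial_cond_spec} to get the denominator $\lambda+\tfrac12(1-\epsilon)\gamma^{*}$, the stated constants fall out.
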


\comment From Theorem~\ref{ridge_hessian_convergence} it can be seen that increasing $\lambda$ results in decreasing the rate for the quadratic term, whilst increasing that of the linear term. As a result, there is a trade-off in choosing the values of regularization and it might be preferable to have larger values for $\lambda$ when far from the solution and gradually decrease it, as iterates get closer to the optimum. This is so because as $\lambda$ gets larger, the method tends to behave more like first order gradient descent.

\begin{theorem}[Q-Linear Convergence of Algorithm~\ref{alg_ridge}]
\label{ridge_hessian_suff}
\begin{enumerate}
	\item \textbf{Global Regularity:} Let Assumptions~\eqref{F_Lip} and~\eqref{strong_convex_boundedness}  hold. For any $\lambda \geq 0$, consider $\rho_{0}$ and $\rho$ such that
\begin{equation*}
1 - \frac{\gamma}{\gamma + \lambda} < \rho_{0} < \rho < 1.
\end{equation*}
Using Algorithm~\ref{alg_ridge} with
\begin{equation*}
\epsilon \leq \frac{\rho_{0} \gamma + (\rho_{0}  - 1) \lambda}{(1 + \rho_{0}) \gamma}, 
\end{equation*}
if~\eqref{initial_cond} holds with $\xi$ as in Theorem~\ref{ridge_hessian_convergence}, then with probability $(1-\delta)^{k_{0}}$, we get locally Q-linear convergence as in~\eqref{lin} with the rate $\rho$.
	\item \textbf{Local Regularity:} Under Assumptions~\eqref{F_Lip},~\eqref{strong_convex_boundedness_opt}, for any
\begin{equation*}
1 - \frac{\gamma^{*}}{\gamma^{*} + 2\lambda} < \rho_{0} < \rho < 1,
\end{equation*}
Theorem~\ref{ridge_hessian_suff} holds with 
\begin{equation*}
\epsilon \leq \frac{\rho_{0} \gamma^{*} + 2(\rho_{0}  - 1) \lambda }{(2 + \rho_{0}) \gamma^{*}}.
\end{equation*}
\end{enumerate}
\end{theorem}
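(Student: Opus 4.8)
The plan is to reduce both parts of the theorem directly to the error recursion already established in Theorem~\ref{ridge_hessian_convergence}, whose right-hand side has the composite form $\rho_0\|\xx^{(k)} - \xx^*\| + \xi\|\xx^{(k)} - \xx^*\|^2$. The strategy mirrors the proof of Theorem~\ref{uniform_newton_sufficient_cond}: choose the sampling accuracy $\epsilon$ so that the \emph{linear} coefficient appearing in that recursion is no larger than the target rate $\rho_0$, then exploit the smallness of the quadratic term near the optimum, together with an induction, to convert the recursion into a clean Q-linear bound with rate $\rho$.

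First I would view the recursion's linear coefficient as a function of $\epsilon$, namely $\rho_0^{\mathrm{rec}}(\epsilon) = (\lambda + \gamma\epsilon)/((1-\epsilon)\gamma + \lambda)$ in the global case (and the analogous expression with $\gamma^*$ and $2\lambda$ in the local case). A short quotient-rule computation shows $\rho_0^{\mathrm{rec}}$ is strictly increasing in $\epsilon$, with $\rho_0^{\mathrm{rec}}(0) = \lambda/(\gamma+\lambda) = 1 - \gamma/(\gamma+\lambda)$, which is exactly the lower bound imposed on $\rho_0$ in the hypothesis. Solving $\rho_0^{\mathrm{rec}}(\epsilon) = \rho_0$ for $\epsilon$ yields $\epsilon = (\rho_0\gamma + (\rho_0-1)\lambda)/((1+\rho_0)\gamma)$, precisely the stated threshold. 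The hypothesis $\rho_0 > 1 - \gamma/(\gamma+\lambda)$ is exactly what guarantees this threshold is strictly positive, so the admissible range for $\epsilon$ is nonempty, and monotonicity ensures that any $\epsilon$ below it gives $\rho_0^{\mathrm{rec}}(\epsilon) \le \rho_0$. The local-regularity case is identical after substituting the local recursion constants, producing the threshold $\epsilon = (\rho_0\gamma^* + 2(\rho_0-1)\lambda)/((2+\rho_0)\gamma^*)$.

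Next, with $\rho_0^{\mathrm{rec}}(\epsilon) \le \rho_0$ secured, I would factor the recursion as $\|\xx^{(k+1)} - \xx^*\| \le (\rho_0 + \xi\|\xx^{(k)} - \xx^*\|)\|\xx^{(k)} - \xx^*\|$ and run an induction on $k$. The base case invokes the initial condition~\eqref{initial_cond}, $\|\xx^{(0)} - \xx^*\| \le (\rho-\rho_0)/\xi$, which forces $\rho_0 + \xi\|\xx^{(0)} - \xx^*\| \le \rho < 1$ and hence $\|\xx^{(1)} - \xx^*\| \le \rho\|\xx^{(0)} - \xx^*\|$. Since $\rho < 1$, the next iterate satisfies $\|\xx^{(1)} - \xx^*\| < \|\xx^{(0)} - \xx^*\| \le (\rho-\rho_0)/\xi$, so the invariant $\|\xx^{(k)} - \xx^*\| \le (\rho-\rho_0)/\xi$ is preserved and~\eqref{lin} holds for every $k \le k_0$; here $\xi$ is already the ridge-regularized constant from Theorem~\ref{ridge_hessian_convergence}, so no separate bound is needed.

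Finally, the probability $(1-\delta)^{k_0}$ follows because the recursion of Theorem~\ref{ridge_hessian_convergence} holds with probability $1-\delta$ at a single iteration, and since a fresh, independent sub-sample is drawn at each of the $k_0$ steps, these events are independent and a product bound delivers the stated confidence. I expect the only genuinely delicate point to be the algebraic inversion in the second step: confirming that the stated $\epsilon$-threshold is exactly the value making $\rho_0^{\mathrm{rec}}(\epsilon) = \rho_0$, and that the hypothesis $1 - \gamma/(\gamma+\lambda) < \rho_0$ (respectively $1 - \gamma^*/(\gamma^*+2\lambda) < \rho_0$) is precisely what keeps that threshold positive. Everything downstream is the identical induction already used for Algorithm~\ref{alg1}.
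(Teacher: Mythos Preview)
Your approach is essentially the paper's: invert the recursion's linear coefficient to get the $\epsilon$-threshold, then run the same induction as in Theorem~\ref{uniform_newton_sufficient_cond}. For the \textbf{Global Regularity} part this is complete.

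For the \textbf{Local Regularity} part, however, you have a genuine gap. The local version of Theorem~\ref{ridge_hessian_convergence} is \emph{not} unconditional: it requires the hypothesis~\eqref{initial_cond_spec}, namely $\|\xx^{(k)} - \xx^{*}\| \leq (1-\epsilon)\gamma^{*}/(2L)$, at every step where you invoke the recursion. Your induction only maintains the invariant $\|\xx^{(k)} - \xx^{*}\| \leq (\rho-\rho_0)/\xi$, and you assert that ``no separate bound is needed.'' But you must additionally check that $(\rho-\rho_0)/\xi \leq (1-\epsilon)\gamma^{*}/(2L)$, otherwise you cannot apply Theorem~\ref{ridge_hessian_convergence} at the inductive step and the whole argument collapses. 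This is exactly the part the paper does \emph{not} omit: it verifies
\[
\frac{\rho - \rho_{0}}{\xi} \leq \frac{\gamma^{*}}{(\gamma^{*} + 2\lambda)\xi} = \frac{\gamma^{*}}{(\gamma^{*} + 2\lambda)} \cdot \frac{(1-\epsilon) \gamma^{*} + 2\lambda}{3L} \leq \frac{\gamma^{*}}{3L} \leq \frac{(1-\epsilon)\gamma^{*}}{2L},
\]
where the first inequality uses $\rho-\rho_0 < 1 - \rho_0 < \gamma^*/(\gamma^*+2\lambda)$ from the hypothesis, and the last uses $\epsilon < 1/3$, which itself follows from your $\epsilon$-threshold and $\rho_0 < 1$. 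Without this chain, the local-regularity claim is unproved.

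A minor point: the $k_0$ events are not independent (the success at step $k$ depends on the location $\xx^{(k)}$, which depends on earlier samples). The correct bookkeeping is via conditional probabilities, as in the proof of Theorem~\ref{uniform_newton_sufficient_cond}; the product $(1-\delta)^{k_0}$ still results, but not from independence.
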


\comment Similar observations as in Theorem~\ref{spectral_hessian_suff} and~\ref{spectral_hessian_suff_relaxed} can also be made here. In other words, we see that large value for $\lambda$ negatively affects the linear convergence phase, while it is beneficial for the early stages of iteration where the rate is quadratic. For example, using Theorem~\ref{ridge_hessian_convergence} and any given $0< \xi_{0} < 1$, it can be shown that if, for some $\beta > 1$,
\begin{equation*}
\lambda \geq \frac{\beta L - (1-\epsilon)2\gamma \xi_{0}}{2\xi_{0}},
\end{equation*}
then while 
\begin{equation*}
\|\xx^{(k)} - \xx^{*}\| \geq \frac{\beta L - 2\gamma \xi_{0} + 4\gamma\xi_{0} \epsilon}{(\beta - 1)L \xi_{0}},
\end{equation*}
we have quadratic convergence, i.e.,
\begin{equation*}
\|\xx^{(k+1)} - \xx^{*}\| \leq \xi_{0} \|\xx^{(k)} - \xx^{*}\|^{2}.
\end{equation*}

\section{Sub-Sampling Hessian \& Gradient}
\label{sec:subsampl_newton_hess_grad}
In order to compute the update $\xx^{(k+1)}$ in Section~\ref{sec:sub_hessian}, full gradient was used. In many problems, this can be a major bottleneck and reduction in computational costs can be made by considering sub-sampling the gradient as well. Such methods have recently been used in PDE constrained inverse problems,~\cite{rodoas1,rodoas2,roszas,doas12,haber2012effective,aravkin2012robust}, where each function $f_{i}$ is only implicitly available. In such problems, computing $\nabla f_{i}$ requires solving a PDE twice, amounting to a total cost of $2n$ PDE solves for each full gradient evaluation. In high dimensional settings where $n ,p \gg 1$, this can pose a significant challenge and sub-sampling the gradient can, at times, drastically reduce the computational complexity of many problems. As such, for this section, we consider the general update of~\eqref{structural_update_grad}, where $\bgg(\xx^{(k)})$ is a sub-sampled approximation to $\nabla F(\xx^{(k)})$.

As in Section~\ref{sec:sub_hessian}, consider picking a sample of indices from $\{1,2,\ldots,n\}$, uniformly at random with replacement. Also let $\mathcal{S}$ and $|\mathcal{S}|$ denote the sample collection and its cardinality, respectively. Let 
\begin{equation}
\bgg(\xx) \defeq \frac{1}{|\mathcal{S}|} \sum_{j \in \mathcal{S}} \nabla f_{j}(\xx),
\label{subsampled_G}
\end{equation}
be the sub-samples gradient. As mentioned before in the requirement~\hyperref[preserve_spectrum]{(R.2)}, we need to ensure that sampling is done in a way to keep as much of the first order information from the full gradient as possible. 

By a simple observation, the gradient $\nabla F(\xx)$ can be written in \textit{matrix-matrix product} from as 
\begin{align*}
\nabla F(\xx) = \begin{pmatrix}
\mid & \mid & & \mid \\
\nabla f_{1}(\xx) & \nabla f_{2}(\xx) & \cdots & \nabla f_{n}(\xx)\\
\mid & \mid & & \mid \\
\end{pmatrix} \begin{pmatrix}1/n\\1/n \\ \vdots \\ 1/n\end{pmatrix}.
\end{align*}
Hence, we can use approximate matrix multiplication results as a fundamental primitive in RandNLA~\cite{mahoney2011randomized,drineas2006fast}, to probabilistically control the error in approximation of $\nabla F(\xx)$ by $\bgg(\xx)$, through uniform sampling of the columns and rows of the involved matrices above. 

\begin{lemma}[Uniform Gradient Sub-Sampling]
\label{randnla_lemma}
For a given $\xx \in \mathcal{D} \cap \mathcal{X}$, let 
\begin{equation*}
\|\nabla f_{i}(\xx) \|_{\mathcal{K}} \leq G(\xx), \quad i =1,2,\ldots,n.
\end{equation*} 
For any $0 < \epsilon < 1$ and $0 < \delta < 1$, if
\begin{equation}
|\mathcal{S}| \geq \frac{G(\xx)^{2}}{\epsilon^{2}} \Big( 1 + \sqrt{8 \ln \frac{1}{\delta}}\Big)^{2},
\label{uniform_sample_size_gradient}
\end{equation}
then for $\bgg(\xx)$ defined in~\eqref{subsampled_G}, we have
\begin{equation*}
\Pr \Big( \|\nabla F(\xx) - \bgg(\xx) \|_{\mathcal{K}} \leq \epsilon \Big) \geq 1-\delta,
\end{equation*}
where $\|.\|_{\mathcal{K}}$ is defined as in~\eqref{norm_cone_vec}.
\end{lemma}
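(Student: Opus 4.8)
The plan is to recast $\|\nabla F(\xx) - \bgg(\xx)\|_{\mathcal{K}}$ as the Frobenius-norm error of an approximate matrix--matrix product and then invoke the standard RandNLA guarantee. First I would fix an orthonormal basis $U$ for the cone $\mathcal{K}$, so that by the equivalent characterization accompanying~\eqref{norm_cone_vec} we have $\|\nabla F(\xx) - \bgg(\xx)\|_{\mathcal{K}} = \|U^{T}(\nabla F(\xx) - \bgg(\xx))\|$, an ordinary Euclidean norm. Setting $\tilde{A} \defeq U^{T}[\,\nabla f_{1}(\xx)\mid\cdots\mid\nabla f_{n}(\xx)\,]$ and $\pp \defeq (1/n)\one$, the projected full gradient is exactly the matrix--vector product $U^{T}\nabla F(\xx) = \tilde{A}\pp$, and the $k$-th column of $\tilde{A}$ satisfies $\|\tilde{A}^{(k)}\| = \|\nabla f_{k}(\xx)\|_{\mathcal{K}} \le G(\xx)$ by hypothesis, while each row of $\pp$ has norm $1/n$.

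Next I would verify that uniform sampling of $|\mathcal{S}|$ indices with replacement, i.e.\ probabilities $p_{k} = 1/n$, reproduces precisely the RandNLA estimator $CR$ of $\tilde{A}\pp$: the rescaling factor $1/(|\mathcal{S}|p_{j})$ cancels against $p_{j} = 1/n$ and the row norm $1/n$, so that $CR$ collapses to the plain average $\tfrac{1}{|\mathcal{S}|}\sum_{j\in\mathcal{S}} U^{T}\nabla f_{j}(\xx) = U^{T}\bgg(\xx)$, exactly the object of interest in~\eqref{subsampled_G}. The standard variance identity for approximate matrix multiplication then gives $\mathbb{E}\,\|\tilde{A}\pp - CR\|_{F}^{2} = \tfrac{1}{|\mathcal{S}|}\sum_{k} p_{k}^{-1}\|\tilde{A}^{(k)}\|^{2}(1/n)^{2} - \tfrac{1}{|\mathcal{S}|}\|\tilde{A}\pp\|^{2} \le G(\xx)^{2}/|\mathcal{S}|$, and hence, by Jensen's inequality, $\mathbb{E}\,\|\nabla F(\xx) - \bgg(\xx)\|_{\mathcal{K}} \le G(\xx)/\sqrt{|\mathcal{S}|}$.

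To upgrade this expectation bound to the required high-probability statement, I would view $X \defeq \|U^{T}(\bgg(\xx) - \nabla F(\xx))\|$ as a function of the $|\mathcal{S}|$ i.i.d.\ sampled indices and apply McDiarmid's bounded-difference inequality: resampling a single index changes the average by a vector of norm at most $2G(\xx)/|\mathcal{S}|$, so $X$ has bounded differences, and the resulting tail, combined with the mean bound above, yields the factor $\bigl(1 + \sqrt{8\ln(1/\delta)}\bigr)$ familiar from the high-probability approximate matrix multiplication theorems of~\cite{drineas2006fast,mahoney2011randomized}. Thus, with probability at least $1-\delta$, $\|\nabla F(\xx) - \bgg(\xx)\|_{\mathcal{K}} \le \tfrac{G(\xx)}{\sqrt{|\mathcal{S}|}}\bigl(1 + \sqrt{8\ln(1/\delta)}\bigr)$, and imposing the sample-size condition~\eqref{uniform_sample_size_gradient} forces the right-hand side to be at most $\epsilon$, which is the claim.

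The main obstacle is the bookkeeping around the cone-restricted norm: the entire argument must be phrased through the $U^{T}$ projection so that $\|\cdot\|_{\mathcal{K}}$ of a vector reduces to a plain Euclidean norm and the matrix-product structure becomes visible; once that reduction is made, the rest is the textbook AMM estimate applied to $\tilde{A}$ and $\pp$. A secondary point of care is reconciling the constant $\sqrt{8}$, which originates from the precise bounded-difference constant in the McDiarmid step (with the cited AMM bound taken at its clean $\beta = 1$ form) rather than from the sharper variance term; accepting that mildly loose constant removes any further difficulty.
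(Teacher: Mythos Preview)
Your proposal is correct and follows essentially the same approach as the paper: reduce the $\mathcal{K}$-restricted norm to an ordinary Euclidean norm via the orthonormal basis $U$, recognize the sub-sampled gradient as a RandNLA approximate matrix--matrix product $\tilde{A}\pp$, and invoke the high-probability AMM bound of~\cite{drineas2006fast}. The only cosmetic difference is that the paper cites~\cite[Lemma~11]{drineas2006fast} as a black box, whereas you unpack its proof (variance identity plus McDiarmid) explicitly; also, the paper applies $U^{T}$ to the product $AB-\widehat{A}\widehat{B}$ at the end rather than to $A$ upfront, but this is immaterial.
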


\comment \label{grad_sample_size_alg} Note that for the results and algorithms below which make use of Assumption~\eqref{grad_boundedness_opt}, we use the sample size, $|\mathcal{S}_{\bgg}|$, given by Lemma~\ref{randnla_lemma} with $G^{*} = G(\xx^{*})$ related to a local optimum, $\xx^{*}$. However, for those which do not make such assumption, the sample size, $|\mathcal{S}_{\bgg}|$, from Lemma~\ref{randnla_lemma} is given with the current iterate, $\xx^{(k)}$. For these latter algorithms, we need to be able to efficiently estimate $G(\xx^{(k)})$ at every iteration or, a priori, have a uniform upper bound for $G(\xx)$ for all $\xx \in \mathcal{D} \cap \mathcal{X}$. Fortunately, in many different problems, it is often possible to estimate $G(\xx)$ very efficiently; see Section~\ref{sec:examples} and SSN1~\cite[Section 4]{romassn1} for concrete examples of a uniform and per-iteration estimations, respectively.

\subsection{Main Results: Sub-Sampled Hessian \& Gradient}
\label{sec:main_result_grad}
In this section, we present various algorithms which incorporate both Hessian and gradient sub-sampling. In such a setting, one is generally faced with two options: either to sub-sample the gradient and the Hessian independently of each other, or to use the same collection of indices and perform simultaneous sub-sampling for both. In this section, we present algorithms and convergence results for both of these strategies. Combining the sampling of Lemma~\ref{hoeffding_lemma} for the Hessian that of Lemma~\ref{randnla_lemma} for the gradient, we are now in the position to present our main results for this Section.  We remind that, for this Section, we consider the general update of~\eqref{structural_update_grad}, where $H(\xx^{(k)})$ and $\bgg(\xx^{(k)})$ are sub-sampled approximations to $\nabla^{2} F(\xx^{(k)})$ and $\nabla F(\xx^{(k)})$, respectively. 

\begin{theorem}[Error Recursion of~\eqref{structural_update_grad}: Independent Sub-Sampling]
\label{uniform_newton_convergence_grad}
\begin{enumerate}
	\item \textbf{Global Regularity:} Let Assumptions~\eqref{F_Lip} and~\eqref{strong_convex_boundedness} hold, and let $0 < \delta < 1$, $0< \epsilon_{1} < 1$, and $0< \epsilon_{2} < 1$ be given. Set $|\mathcal{S}_{H}|$ as described in Comment~\ref{sample_size_alg} with $(\epsilon_{1},\delta)$ and $|\mathcal{S}_{\bgg}|$ as in~\eqref{uniform_sample_size_gradient} with $(\epsilon_{2},\delta)$ and $\xx^{(k)}$. Independently, choose $\mathcal{S}_{H}$ and $\mathcal{S}_{\bgg}$, and let $H(\xx^{(k)})$ and $\bgg(\xx^{(k)})$ be as in~\eqref{subsampled_H} and~\eqref{subsampled_G}, respectively. 
Then, for the update~\eqref{structural_update_grad} with $\alpha_{k} = 1$, with probability $(1-\delta)^{2}$, we have
\begin{equation*}
\|\xx^{(k+1)} - \xx^{*}\| \leq \eta + \rho_{0}\|\xx^{(k)} - \xx^{*}\| + \xi \|\xx^{(k)} - \xx^{*}\|^{2},
\end{equation*}
where 
\begin{equation*}
\eta = \frac{\epsilon_{2}}{(1-\epsilon_{1}) \gamma }, \quad 
\rho_{0} = \frac{\epsilon_{1}}{(1-\epsilon_{1})}, \quad \text{and} \quad 
\xi = \frac{L}{2(1-\epsilon_{1}) \gamma }.
\end{equation*}
\item \textbf{Local Regularity:} Under Assumptions~\eqref{F_Lip},~\eqref{strong_convex_boundedness_opt}, and~\eqref{initial_cond_spec} with $\epsilon_{1}$, the results of Theorem~\ref{uniform_newton_convergence_grad} holds with 
\begin{equation*}
\eta = \frac{2 \epsilon_{2}}{(1-\epsilon_{1}) \gamma^{*} }, \quad
\rho_{0} = \frac{2\epsilon_{1}}{(1-\epsilon_{1})}, \quad \text{and} \quad 
\xi = \frac{3 L}{(1-\epsilon_{1}) \gamma^{*} }.
\end{equation*}
\end{enumerate}
\end{theorem}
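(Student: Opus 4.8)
The plan is to extend the argument establishing Theorem~\ref{uniform_newton_convergence} by carrying along the single extra term created when the full gradient $\nabla F(\xx^{(k)})$ is replaced by its sub-sample $\bgg(\xx^{(k)})$; that term is precisely what produces the additive constant $\eta$. Write $\ee^{(k)} = \xx^{(k)} - \xx^{*}$ and $\ee^{(k+1)} = \xx^{(k+1)} - \xx^{*}$. Since $\mathcal{D}\cap\mathcal{X}$ is convex and contains $\xx^{(k)}$, $\xx^{(k+1)}$, and $\xx^{*}$, both increments lie in the tangent cone $\mathcal{K}$ of~\eqref{cone}, so all estimates below may be taken in the $\mathcal{K}$-restricted norms of~\eqref{norm_cone}.

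First I would write the first-order optimality condition for the constrained quadratic subproblem~\eqref{structural_update_grad} (with $\alpha_k=1$) tested against $\xx^{*}$, together with the variational inequality that characterizes $\xx^{*}$ as the constrained minimizer of $F$. Combining the two inequalities yields
\begin{equation*}
(\ee^{(k+1)})^{T} H(\xx^{(k)})\, \ee^{(k+1)} \leq (\ee^{(k+1)})^{T} H(\xx^{(k)})\, \ee^{(k)} + (\ee^{(k+1)})^{T}\big[\nabla F(\xx^{*}) - \bgg(\xx^{(k)})\big].
\end{equation*}
Next I would insert $\nabla F(\xx^{(k)})$ into the last bracket and use the integral identity $\nabla F(\xx^{(k)}) - \nabla F(\xx^{*}) = \int_{0}^{1}\nabla^{2}F(\xx^{*}+t\,\ee^{(k)})\,\ee^{(k)}\,dt$ to merge the exact-gradient part with the $H(\xx^{(k)})\ee^{(k)}$ term. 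The right-hand side then splits into three pieces acting on $\ee^{(k)}$ (the last being a pure residual): a Hessian-mismatch factor $H(\xx^{(k)}) - \nabla^{2}F(\xx^{(k)})$, a Lipschitz factor $\nabla^{2}F(\xx^{(k)}) - \int_{0}^{1}\nabla^{2}F(\xx^{*}+t\,\ee^{(k)})\,dt$, and the gradient-sampling residual $\nabla F(\xx^{(k)}) - \bgg(\xx^{(k)})$.

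I would then bound each piece in the $\mathcal{K}$-norm. The left-hand side is at least $\lambda_{\min}^{\mathcal{K}}(H(\xx^{(k)}))\|\ee^{(k+1)}\|^{2} \geq (1-\epsilon_{1})\gamma\,\|\ee^{(k+1)}\|^{2}$ by Lemma~\ref{hoeffding_lemma} with accuracy $\epsilon_{1}$ and~\eqref{F_strong}. The Hessian-mismatch piece reproduces the linear coefficient $\rho_{0} = \epsilon_{1}/(1-\epsilon_{1})$ exactly as in Theorem~\ref{uniform_newton_convergence}, the Lipschitz piece reproduces $\xi = L/(2(1-\epsilon_{1})\gamma)$ through Assumption~\eqref{F_Lip} and $\int_{0}^{1}L(1-t)\,dt = L/2$, and the new residual piece is controlled by Lemma~\ref{randnla_lemma} (applied at $\xx^{(k)}$ with accuracy $\epsilon_{2}$), giving $\|\nabla F(\xx^{(k)}) - \bgg(\xx^{(k)})\|_{\mathcal{K}} \leq \epsilon_{2}$ and hence the additive $\eta = \epsilon_{2}/((1-\epsilon_{1})\gamma)$. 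Dividing through by $(1-\epsilon_{1})\gamma\,\|\ee^{(k+1)}\|$ (the case $\ee^{(k+1)}=\zero$ being trivial) gives the stated recursion. Because $\mathcal{S}_{H}$ and $\mathcal{S}_{\bgg}$ are drawn independently, the Hessian event and the gradient event each fail with probability at most $\delta$ and jointly hold with probability $(1-\delta)^{2}$.

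For the local-regularity statement I would run the identical computation but replace the global constants by their values at $\xx^{*}$, i.e.\ by~\eqref{strong_convex_boundedness_opt}, invoking the initial-neighborhood hypothesis~\eqref{initial_cond_spec} and the Lipschitz Assumption~\eqref{F_Lip} to keep $\lambda_{\min}^{\mathcal{K}}(\nabla^{2}F(\xx^{(k)}))$ and $K(\xx^{(k)})$ within a constant factor of $\gamma^{*}$ and $K^{*}$ throughout the neighborhood; this inflation is what turns the constants into $\eta = 2\epsilon_{2}/((1-\epsilon_{1})\gamma^{*})$, $\rho_{0} = 2\epsilon_{1}/(1-\epsilon_{1})$, and $\xi = 3L/((1-\epsilon_{1})\gamma^{*})$, mirroring the global-to-local passage in Theorem~\ref{uniform_newton_convergence}. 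I expect the main obstacle to be twofold: first, obtaining the condition-number-free linear coefficient in the Hessian-mismatch piece, which requires the \emph{multiplicative} (Loewner-order) reading of the spectrum guarantee~\eqref{spectrum_preserve_epsilon} rather than a crude additive estimate $\|H - \nabla^{2}F\|_{\mathcal{K}}\leq \epsilon_{1}K$ (the latter would leak a factor $\kappa$ into $\rho_{0}$); and second, the local-regularity bookkeeping that certifies the neighborhood of~\eqref{initial_cond_spec} is preserved under the recursion, so that the per-iteration bounds remain valid along the whole trajectory.
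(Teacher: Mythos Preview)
Your proposal is correct and matches the paper's approach: the paper packages exactly your deterministic decomposition into Lemmas~\ref{structural_lemma_grad} and~\ref{structural_lemma_grad_relaxed} (which in turn reuse the optimality-inequality argument of Lemmas~\ref{structural_lemma_eig} and~\ref{structural_lemma_eig_relax}, inserting the identity $\bgg(\xx^{(k)}) = \bgg(\xx^{(k)}) - \nabla F(\xx^{(k)}) + \nabla F(\xx^{(k)})$), and the theorem's one-line proof just plugs in the concentration bounds from Lemmas~\ref{hoeffding_lemma_2} and~\ref{randnla_lemma}. One small over-worry: your second anticipated obstacle about preserving the neighborhood~\eqref{initial_cond_spec} along the whole trajectory is not needed here, since Theorem~\ref{uniform_newton_convergence_grad} is a single-step recursion and only assumes~\eqref{initial_cond_spec} for the current $\xx^{(k)}$; that inductive bookkeeping belongs to the downstream convergence theorems.
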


\comment Similar to the case of Hessian sub-sampling, the bounds given here exhibit a composite behavior where the error is at first dominated by a quadratic term, which transforms to linear term, and finally is dominated by the rate at which the gradient is approximated near an optimum. 

It is also possible to obtain an alternative convergence result under local regularity assumption on the gradients at a local optimum $\xx^{*}$ as in~\eqref{grad_boundedness_opt}. This gives rise to an algorithm where the Hessian and the gradient are simultaneously sub-sampled, resulting in dependency among the approximations; see Algorithm~\ref{alg_simult_hessian_grad}.

\begin{theorem}[Error Recursion of~\eqref{structural_update_grad}: Simultaneous Sub-Sampling]
\label{uniform_newton_convergence_grad_relax_2}
Let Assumptions~\eqref{F_Lip},~\eqref{strong_convex_boundedness_opt}, and~\eqref{grad_boundedness_opt} hold, and let $0 < \delta < 1$ and $0< \epsilon < 1$ be given. Using the same $\epsilon$, set $|\mathcal{S}_{H}|$ as described in Comment~\ref{sample_size_alg} and $|\mathcal{S}_{\bgg}|$ as in~\eqref{uniform_sample_size_gradient} with $G^{*}$. Select a sample set $\mathcal{S}$ of size $\max\{|\mathcal{S}_{H}|, |\mathcal{S}_{G}|\}$, uniformly at random and let $H(\xx^{(k)})$ and $\bgg(\xx^{(k)})$ be as in~\eqref{subsampled_H} and~\eqref{subsampled_G}, respectively, both using $\mathcal{S}$. For the update~\eqref{structural_update_grad} with $\alpha_{k} = 1$, if $\xx^{(k)}$ satisfies~\eqref{initial_cond_spec}, then with probability $1-2 \delta$, we have
\begin{equation*}
\|\xx^{(k+1)} - \xx^{*}\| \leq \eta + \xi\|\xx^{(k)} - \xx^{*}\|^{2},
\end{equation*}
where 
\begin{equation*}
\eta = \frac{2 \epsilon}{(1-\epsilon) \gamma^{*}}, \quad
\xi = \frac{L }{(1-\epsilon) \gamma^{*}} .
\end{equation*}
\end{theorem}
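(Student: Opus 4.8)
The plan is to follow the same error-recursion framework used for the full-gradient theorem (Theorem~\ref{uniform_newton_convergence}) and its independent-sampling extension (Theorem~\ref{uniform_newton_convergence_grad}), and to exploit the fact that reusing a single sample set $\mathcal{S}$ for both $H(\xx^{(k)})$ and $\bgg(\xx^{(k)})$ produces an exact cancellation that removes the linear term from the recursion. First I would reduce $\|\xx^{(k+1)} - \xx^{*}\|$, via the structural characterization of the update~\eqref{structural_update_grad} already employed in Theorem~\ref{uniform_newton_convergence_grad}, to $\|H^{-1}\|_{\mathcal{K}}$ times the $\mathcal{K}$-norm of the residual $\rr \defeq H(\xx^{(k)})(\xx^{(k)} - \xx^{*}) - \bgg(\xx^{(k)})$ (modulo the optimality contribution at $\xx^{*}$, handled exactly as in the cited proofs). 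Invoking the spectrum-preservation guarantee of Lemma~\ref{hoeffding_lemma} for the sample size of Comment~\ref{sample_size_alg}, together with the closeness of $\xx^{(k)}$ to $\xx^{*}$ granted by~\eqref{initial_cond_spec} and Lipschitz continuity~\eqref{F_Lip}, I would obtain $\lambda_{\min}^{\mathcal{K}}(H(\xx^{(k)})) \geq (1-\epsilon)\gamma^{*}/2$ and hence $\|H^{-1}\|_{\mathcal{K}} \leq 2/\big((1-\epsilon)\gamma^{*}\big)$; this is the source of the factor $2$ appearing in both $\eta$ and $\xi$.

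The crux is the algebraic decomposition of $\rr$. Writing each sampled gradient through the integral mean-value identity, $\nabla f_j(\xx^{(k)}) - \nabla f_j(\xx^{*}) = \int_0^1 \nabla^2 f_j(\xx^{*} + t(\xx^{(k)} - \xx^{*}))\,dt\,(\xx^{(k)} - \xx^{*})$, and averaging over $j \in \mathcal{S}$ gives an expression for $\bgg(\xx^{(k)}) - \bgg(\xx^{*})$ in which the driving matrix is the \emph{same} subsampled average of curvatures that defines $H(\xx^{(k)}) = \frac{1}{|\mathcal{S}|}\sum_{j\in\mathcal{S}}\nabla^2 f_j(\xx^{(k)})$. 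Subtracting yields, up to the optimality contribution of $\nabla F(\xx^{*})$,
\begin{equation*}
\rr = -\big(\bgg(\xx^{*}) - \nabla F(\xx^{*})\big) + \Big[\tfrac{1}{|\mathcal{S}|}\sum_{j\in\mathcal{S}}\int_0^1 \big(\nabla^2 f_j(\xx^{(k)}) - \nabla^2 f_j(\xx^{*} + t(\xx^{(k)} - \xx^{*}))\big)\,dt\Big](\xx^{(k)} - \xx^{*}).
\end{equation*}
The Hessian sub-sampling error $H(\xx^{(k)}) - \nabla^2 F(\xx^{(k)})$ never survives on its own: it is exactly the quantity that produced the linear factor $\rho_0$ in the independent-sampling recursion, and here it is absorbed into the matched gradient difference. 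This is precisely why the statement carries no $\rho_0\|\xx^{(k)} - \xx^{*}\|$ term, and why its quadratic constant is smaller than in Theorem~\ref{uniform_newton_convergence_grad}.

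It then remains to bound the two surviving pieces. For the first, Lemma~\ref{randnla_lemma} applied at the fixed point $\xx^{*}$ with the bound $G^{*}$ from~\eqref{grad_boundedness_opt} gives $\|\bgg(\xx^{*}) - \nabla F(\xx^{*})\|_{\mathcal{K}} \leq \epsilon$ with probability at least $1-\delta$; this is legitimate despite $\mathcal{S}$ being reused for the Hessian, since the lemma governs a single random sample evaluated at a deterministic point. For the second, Lipschitz continuity~\eqref{F_Lip} bounds each integrand by $\|\nabla^2 f_j(\xx^{(k)}) - \nabla^2 f_j(\xx^{*} + t(\xx^{(k)} - \xx^{*}))\|_{\mathcal{K}} \leq L(1-t)\|\xx^{(k)} - \xx^{*}\|$, so that $\int_0^1 L(1-t)\,dt = L/2$ yields a matrix of $\mathcal{K}$-norm at most $(L/2)\|\xx^{(k)} - \xx^{*}\|$ and hence a contribution $(L/2)\|\xx^{(k)} - \xx^{*}\|^2$. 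Multiplying both pieces by $\|H^{-1}\|_{\mathcal{K}} \leq 2/\big((1-\epsilon)\gamma^{*}\big)$ reads off $\eta = 2\epsilon/\big((1-\epsilon)\gamma^{*}\big)$ and $\xi = L/\big((1-\epsilon)\gamma^{*}\big)$, while a union bound over the spectrum-preservation event (Lemma~\ref{hoeffding_lemma}) and the gradient-approximation event (Lemma~\ref{randnla_lemma}), each of probability at least $1-\delta$ for $|\mathcal{S}| = \max\{|\mathcal{S}_H|, |\mathcal{S}_G|\}$, delivers the overall probability $1-2\delta$.

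The main obstacle I anticipate is making the cancellation and the treatment of the ``optimality contribution of $\nabla F(\xx^{*})$'' fully rigorous in the constrained setting, where the update is an $\arg\min$ over $\mathcal{D}\cap\mathcal{X}$ and first-order optimality at $\xx^{*}$ is a variational inequality over the tangent cone $\mathcal{K}$ rather than $\nabla F(\xx^{*}) = 0$. I would route this through the same structural machinery underlying Theorem~\ref{uniform_newton_convergence_grad}, keeping every quantity in the $\mathcal{K}$-restricted norms and ensuring that the gradient error enters evaluated at $\xx^{*}$ rather than at $\xx^{(k)}$ — which is exactly what the reuse of $\mathcal{S}$ makes possible, and which is the mechanism that allows Assumption~\eqref{grad_boundedness_opt} (a bound only at the optimum) to suffice here.
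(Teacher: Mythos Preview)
Your proposal is correct and follows essentially the same route as the paper. The paper packages the key cancellation you identified into a separate structural lemma (Lemma~\ref{structural_lemma_grad_relaxed_2}), phrased abstractly via the Jacobian condition $\|H(\xx)-J_{\bgg}(\yy)\|_{\mathcal{K}}\le T\|\xx-\yy\|$; since simultaneous sampling gives $J_{\bgg}=H$, one takes $T=L$ from~\eqref{F_Lip}, and your mean-value decomposition of $\rr$ is exactly the specialization of that lemma's proof. The factor~$2$, the evaluation of the gradient error at $\xx^{*}$ via Lemma~\ref{randnla_lemma} with $G^{*}$, and the union bound yielding $1-2\delta$ all match. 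Your one loose end---replacing the informal ``$\|H^{-1}\|_{\mathcal{K}}$ times residual'' picture by the variational-inequality argument in the constrained case---is precisely what the paper's structural lemma does, so routing through that machinery as you propose closes the gap.
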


\begin{algorithm}
\caption{Newton with Independent Sub-Sampling of Hessian and Gradient}
\begin{algorithmic}[1]
\STATE \textbf{Input:} $\xx^{(0)}$, $0 < \delta < 1$, $0 < \epsilon_{1} < 1$, $0 < \epsilon_{2} < 1$ and $0 < \rho < 1$
\STATE - Set the sample size, $|\mathcal{S}_{H}|$, with $\epsilon_{1}$ and $\delta$ as described in Comment~\ref{sample_size_alg} 
\FOR {$k = 0,1,2, \cdots$ until termination} 
\STATE - Select a sample set, $\mathcal{S}_{H}$, of size $|\mathcal{S}_{H}|$ and form $H(\xx^{(k)})$ as in~\eqref{subsampled_H}
\STATE - Set $\epsilon^{(k)}_{2} = \rho^{k} \epsilon_{2}$
\STATE - Set the sample size, $|\mathcal{S}^{(k)}_{\bgg}|$, with $\epsilon^{(k)}_{2}$, $\delta$ and $\xx^{(k)}$ as in~\eqref{uniform_sample_size_gradient}
\STATE - Select a sample set, $\mathcal{S}^{(k)}_{\bgg}$ of size $|\mathcal{S}_{\bgg}^{(k)}|$ and form $\bgg(\xx^{(k)})$ as in~\eqref{subsampled_G}
\STATE - Update $\xx^{(k+1)}$ as in~\eqref{structural_update_grad} with $H(\xx^{(k)})$, $\bgg(\xx^{(k)})$ and $\alpha_{k}=1$
\ENDFOR
\end{algorithmic}
\label{alg_hessian_grad}
\end{algorithm}

We are now in the position to give R-linear convergence of the proposed algorithms.  

\begin{theorem}[R-Linear Convergence of Algorithm~\ref{alg_hessian_grad}] 
\label{uniform_newton_grad_sufficient_cond}
\begin{enumerate}
	\item \textbf{Global Regularity:} Let Assumptions~\eqref{F_Lip} and~\eqref{strong_convex_boundedness} hold. Consider any $0 < \rho < 1$, $0 < \rho_{0} < 1$, and $0 < \rho_{1} < 1$ such that $\rho_{0} + \rho_{1} < \rho$. Let
\begin{equation*}
\epsilon_{1} \leq \frac{\rho_{0} }{1 + \rho_{0} },
\end{equation*}	
and define
\begin{equation*}
\sigma \defeq \frac{2 ( \rho - (\rho_{0} + \rho_{1}) ) (1-\epsilon_{1}) \gamma }{L}.
\end{equation*}	
Using Algorithm~\ref{alg_hessian_grad} with
\begin{equation*}
\epsilon_{2} \leq (1-\epsilon_{1})\gamma \rho_{1} \sigma,
\end{equation*}
if the initial iterate, $\xx^{(0)}$, satisfies 
\begin{equation}
\|\xx^{(0)}-\xx^{*}\| \leq \sigma, 
\label{initial_cond_grad}
\end{equation}
we get locally R-linear convergence
\begin{equation}
\|\xx^{(k)} - \xx^{*}\| \leq \rho^{k} \sigma,
\label{lin_grad}
\end{equation} 
with probability $(1-\delta)^{2k}$. 
\item \textbf{Local Regularity:} Under Assumptions~\eqref{F_Lip} and~\eqref{strong_convex_boundedness_opt}, Theorem~\ref{uniform_newton_grad_sufficient_cond} holds with 
\begin{align*}
\epsilon_{1} \leq \frac{\rho_{0} }{2 + \rho_{0}}, \quad \text{and} \quad
\epsilon_{2} \leq (1-\epsilon_{1})\gamma^{*} \rho_{1} \sigma/2,
\end{align*}
where
\begin{equation*}
\sigma \defeq \frac{( \rho - (\rho_{0} + \rho_{1}) ) (1-\epsilon_{1}) \gamma^{*} }{3L}.
\end{equation*}
\end{enumerate}
\end{theorem}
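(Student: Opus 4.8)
The plan is to run a straightforward induction establishing the envelope $\|\xx^{(k)} - \xx^{*}\| \leq \rho^{k}\sigma$, using the per-iteration error recursion of Theorem~\ref{uniform_newton_convergence_grad} as the engine and exploiting the geometric decay of the gradient accuracy $\epsilon_{2}^{(k)} = \rho^{k}\epsilon_{2}$ to keep the additive error term in check. First I would unwind the definitions into three bookkeeping facts. Since $\epsilon_{1} \leq \rho_{0}/(1+\rho_{0})$, the linear coefficient obeys $\epsilon_{1}/(1-\epsilon_{1}) \leq \rho_{0}$. Since $\xi = L/(2(1-\epsilon_{1})\gamma)$ while $\sigma = 2(\rho-(\rho_{0}+\rho_{1}))(1-\epsilon_{1})\gamma/L$, the product telescopes to $\xi\sigma = \rho - \rho_{0} - \rho_{1}$. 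Finally, because $\epsilon_{2} \leq (1-\epsilon_{1})\gamma\rho_{1}\sigma$, the additive constant produced at iteration $k$ satisfies $\eta^{(k)} = \rho^{k}\epsilon_{2}/((1-\epsilon_{1})\gamma) \leq \rho^{k}\rho_{1}\sigma$.

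With these in hand the induction is immediate. The base case $k=0$ is exactly the initial condition~\eqref{initial_cond_grad}. For the step, assume $\|\xx^{(k)} - \xx^{*}\| \leq \rho^{k}\sigma$ and apply Theorem~\ref{uniform_newton_convergence_grad} at iteration $k$ with gradient accuracy $\epsilon_{2}^{(k)}$. Bounding the three terms separately gives $\eta^{(k)} \leq \rho^{k}\rho_{1}\sigma$, then $\rho_{0}\|\xx^{(k)}-\xx^{*}\| \leq \rho_{0}\rho^{k}\sigma$, and, using $\rho^{2k}\leq \rho^{k}$, $\xi\|\xx^{(k)}-\xx^{*}\|^{2} \leq \xi\rho^{2k}\sigma^{2} = \rho^{2k}\sigma(\xi\sigma) \leq \rho^{k}\sigma(\rho-\rho_{0}-\rho_{1})$. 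Summing yields $\|\xx^{(k+1)}-\xx^{*}\| \leq \rho^{k}\sigma\,[\rho_{1}+\rho_{0}+(\rho-\rho_{0}-\rho_{1})] = \rho^{k+1}\sigma$, which closes the induction and gives~\eqref{lin_grad}.

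For the probability, each single application of the recursion needs both the Hessian sub-sampling (Lemma~\ref{hoeffding_lemma}) and the independently drawn gradient sub-sampling (Lemma~\ref{randnla_lemma}) to succeed, an event of probability at least $(1-\delta)^{2}$ by independence; since the bound at step $k$ rests on the recursion holding at steps $0,\ldots,k-1$, intersecting over these $k$ iterations delivers the stated $(1-\delta)^{2k}$. The local-regularity case is proved identically after substituting the starred constants and the local error recursion of Theorem~\ref{uniform_newton_convergence_grad}: with $\epsilon_{1}\leq \rho_{0}/(2+\rho_{0})$, $\epsilon_{2}\leq(1-\epsilon_{1})\gamma^{*}\rho_{1}\sigma/2$, and $\sigma = (\rho-(\rho_{0}+\rho_{1}))(1-\epsilon_{1})\gamma^{*}/(3L)$, the same two identities $\xi\sigma = \rho-\rho_{0}-\rho_{1}$ and $\eta \leq \rho_{1}\sigma$ persist, so the arithmetic of the inductive step is unchanged.

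The calculation is routine once the constants are unwound; the only genuine content — and the reason the conclusion is R-linear rather than Q-linear — is the decision to let the gradient accuracy decay geometrically at precisely the target rate $\rho$. Were $\epsilon_{2}$ held fixed, the additive term $\eta$ would not shrink and the iterates would merely be driven into a ball of radius $\sim \eta/(1-\rho_{0})$ about $\xx^{*}$; matching $\eta^{(k)}=\rho^{k}\eta$ to the envelope $\rho^{k}\sigma$ is exactly what lets the first-order sampling error decay in lockstep with the iterates. Thus the main thing to get right is the simultaneous calibration of $\sigma$, $\epsilon_{1}$, and $\epsilon_{2}$ so that the linear, quadratic, and additive contributions each scale like $\rho^{k}\sigma$ and sum to exactly $\rho^{k+1}\sigma$; there is no analytic obstacle beyond this accounting.
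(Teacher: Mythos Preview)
Your proposal is correct and follows essentially the same induction as the paper: the same three bookkeeping identities ($\epsilon_{1}/(1-\epsilon_{1})\le\rho_{0}$, $\xi\sigma=\rho-\rho_{0}-\rho_{1}$, $\eta^{(k)}\le\rho^{k}\rho_{1}\sigma$), the same use of $\rho^{2k}\le\rho^{k}$ to tame the quadratic term, and the same probability accounting. One small omission in your local-regularity sketch: to invoke the local version of Theorem~\ref{uniform_newton_convergence_grad} at step $k$ you must also check that the side condition~\eqref{initial_cond_spec} holds, i.e.\ $\|\xx^{(k)}-\xx^{*}\|\le(1-\epsilon_{1})\gamma^{*}/(2L)$; this is immediate from the induction hypothesis since $\sigma=(\rho-(\rho_{0}+\rho_{1}))(1-\epsilon_{1})\gamma^{*}/(3L)<(1-\epsilon_{1})\gamma^{*}/(2L)$, but it should be stated.
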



\comment From Theorems~\ref{uniform_newton_grad_sufficient_cond}, it can be seen that in order to get linear convergence rate, estimation of the gradient must be done, progressively, more accurately, whereas the sample size for Hessian can remain unchanged across iterations. This is in line with the common knowledge where, in practice, as the iterations get closer to the optimal solution, the accuracy of gradient estimation is more important than that of Hessian. 

\begin{algorithm}
\caption{Newton with Simultaneous Sub-Sampling of Hessian and Gradient}
\begin{algorithmic}[1]
\STATE \textbf{Input:} $\xx^{(0)}$, $0 < \delta < 1$, $0 < \epsilon < 1$, $0 < \rho < 1$
\FOR {$k = 0,1,2, \cdots$ until termination} 
\STATE - Set $\epsilon^{(k)} = \rho^{k} \epsilon$ 
\STATE - Set the sample size, $|\mathcal{S}^{(k)}_{H}|$, with $\epsilon^{(k)}$ and $\delta$ as described in Comment~\ref{sample_size_alg}
\STATE - Set the sample size, $|\mathcal{S}^{(k)}_{\bgg}|$, with $\epsilon^{(k)}$ and $\delta$ as described in Comment~\ref{grad_sample_size_alg}
\STATE - Set $|\mathcal{S}^{(k)}| = \max \{|\mathcal{S}^{(k)}_{\bgg}|, |\mathcal{S}^{(k)}_{H}|\}$
\STATE - Select a sample set, $\mathcal{S}^{(k)}$, of size $|\mathcal{S}^{(k)}|$
\STATE - Form $H(\xx^{(k)})$ as in~\eqref{subsampled_H} and $\bgg(\xx^{(k)})$ as in~\eqref{subsampled_G}, both using $\mathcal{S}^{(k)}$
\STATE - Update $\xx^{(k+1)}$ as in~\eqref{structural_update_grad} with $H(\xx^{(k)})$, $\bgg(\xx^{(k)})$ and $\alpha_{k}=1$
\ENDFOR
\end{algorithmic}
\label{alg_simult_hessian_grad}
\end{algorithm}

\begin{theorem}[R-Linear Convergence of Algorithm~\ref{alg_simult_hessian_grad}]
\label{uniform_newton_grad_sufficient_cond_relax_2}
Let Assumptions~\eqref{F_Lip},~\eqref{strong_convex_boundedness_opt} and~\eqref{grad_boundedness_opt} hold and consider any $0 < \rho_{0} < \rho < 1$.  Let $0 < \epsilon < 1$ be such that
\begin{equation}
\frac{\epsilon}{(1-\epsilon)^{2}} \leq \frac{\rho_{0} (\rho - \rho_{0}) (\gamma^{*})^{2} }{4L},
\label{epsilon_cond}
\end{equation}
and define
\begin{equation}
\sigma \defeq \frac{(\rho - \rho_{0})(1-\epsilon) \gamma^{*}}{2L}. 
\label{initial_cond_grad_2}
\end{equation}
Using Algorithm~\ref{alg_simult_hessian_grad}, if the initial iterate, $\xx^{(0)}$, satisfies
\begin{equation*}
\|\xx^{(0)} - \xx^{*} \| \leq \sigma, 
\end{equation*}
we get locally R-linear convergence as in~\eqref{lin_grad} with probability $(1-2 \delta)^{k}$. 
\end{theorem}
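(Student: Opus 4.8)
The plan is to prove the claim by induction on $k$, using the per-iteration error recursion of Theorem~\ref{uniform_newton_convergence_grad_relax_2} as the engine. The structural feature that makes R-linear convergence possible here is that, under simultaneous sampling together with the bounded-gradient assumption~\eqref{grad_boundedness_opt}, the recursion has \emph{no} linear term: at iteration $k$, substituting the accuracy $\epsilon^{(k)} = \rho^k\epsilon$ produced by Algorithm~\ref{alg_simult_hessian_grad} into that theorem gives $\|\xx^{(k+1)}-\xx^*\| \le \eta^{(k)} + \xi^{(k)}\|\xx^{(k)}-\xx^*\|^2$, with $\eta^{(k)} = 2\epsilon^{(k)}/((1-\epsilon^{(k)})\gamma^*)$ and $\xi^{(k)} = L/((1-\epsilon^{(k)})\gamma^*)$. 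Since $\epsilon^{(k)} = \rho^k\epsilon \le \epsilon$ forces $1-\epsilon^{(k)} \ge 1-\epsilon$, I immediately record the uniform bounds $\eta^{(k)} \le 2\rho^k\epsilon/((1-\epsilon)\gamma^*)$ and $\xi^{(k)} \le L/((1-\epsilon)\gamma^*)$, which is all that the estimates below require.

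I would take the induction hypothesis to be $\|\xx^{(k)}-\xx^*\| \le \rho^k\sigma$, with $\sigma$ as in~\eqref{initial_cond_grad_2}; the base case $k=0$ is exactly the assumed initial condition $\|\xx^{(0)}-\xx^*\| \le \sigma$. In the inductive step I would first note that the hypothesis gives $\|\xx^{(k)}-\xx^*\| \le \sigma$, which by the construction of $\sigma$ (note $\sigma < \gamma^*/(2L)$ since $\rho-\rho_0<1$ and $1-\epsilon<1$) lies in the neighborhood of $\xx^*$ on which~\eqref{initial_cond_spec} is assumed, so Theorem~\ref{uniform_newton_convergence_grad_relax_2} may be invoked at iteration $k$. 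The idea is then to split the target $\rho^{k+1}\sigma = \rho_0\,\rho^k\sigma + (\rho-\rho_0)\,\rho^k\sigma$ and to charge the two terms of the recursion separately: the constant term $\eta^{(k)}$ against the $\rho_0$ budget, and the quadratic term against the $(\rho-\rho_0)$ budget.

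The quadratic budget closes directly from the definition of $\sigma$. Using the hypothesis and the bound on $\xi^{(k)}$, together with $\sigma = (\rho-\rho_0)(1-\epsilon)\gamma^*/(2L)$, one gets $\xi^{(k)}\|\xx^{(k)}-\xx^*\|^2 \le (L/((1-\epsilon)\gamma^*))\,\rho^{2k}\sigma^2 = \tfrac12(\rho-\rho_0)\rho^{2k}\sigma \le (\rho-\rho_0)\rho^k\sigma$, the last step using $\rho^k \le 1$. The constant budget is precisely where condition~\eqref{epsilon_cond} enters: the requirement $\eta^{(k)} \le \rho_0\rho^k\sigma$, after applying $\eta^{(k)} \le 2\rho^k\epsilon/((1-\epsilon)\gamma^*)$ and substituting $\sigma$, reduces exactly to $\epsilon/(1-\epsilon)^2 \le \rho_0(\rho-\rho_0)(\gamma^*)^2/(4L)$, which is~\eqref{epsilon_cond}. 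Adding the two bounds yields $\|\xx^{(k+1)}-\xx^*\| \le \rho_0\rho^k\sigma + (\rho-\rho_0)\rho^k\sigma = \rho^{k+1}\sigma$, closing the induction and establishing~\eqref{lin_grad}. Finally, the recursion at each of the $k$ steps holds with probability $1-2\delta$ over the fresh sample drawn that iteration, so the full chain holds with probability $(1-2\delta)^k$.

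The main obstacle, and the conceptual heart of the argument, is the alignment of decay rates. For a fixed-accuracy recursion $\eta + \xi\|\cdot\|^2$ the constant floor $\eta$ would obstruct convergence below order $\eta$, so one must \emph{drive $\epsilon^{(k)}\to 0$ geometrically at exactly rate $\rho$} so that $\eta^{(k)}$ decays at the same rate as the target $\rho^k\sigma$ and fits into a fixed $\rho_0$-fraction of the budget, while the self-improving quadratic term (decaying like $\rho^{2k}$) comfortably occupies the complementary $(\rho-\rho_0)$-fraction. Making the two budgets simultaneously feasible is what pins down both the size of $\sigma$ in~\eqref{initial_cond_grad_2} and the precise form of~\eqref{epsilon_cond}; everything else is the routine monotonicity control of $\eta^{(k)},\xi^{(k)}$ and the product bound over iterations.
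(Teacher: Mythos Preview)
Your proposal is correct and follows essentially the same approach as the paper's proof: both argue by induction on $k$, invoke the error recursion of Theorem~\ref{uniform_newton_convergence_grad_relax_2} with $\epsilon^{(k)}=\rho^k\epsilon$, use the monotonicity $\eta^{(k)}\le 2\rho^k\epsilon/((1-\epsilon)\gamma^*)\le \rho_0\rho^k\sigma$ (which is exactly~\eqref{epsilon_cond}) and $\xi^{(k)}\le \xi^{(0)}$ together with $\sigma=(\rho-\rho_0)/(2\xi^{(0)})$ to bound the quadratic term, and verify~\eqref{initial_cond_spec} along the way. Your explicit ``budget split'' $\rho^{k+1}\sigma=\rho_0\rho^k\sigma+(\rho-\rho_0)\rho^k\sigma$ is a nice way to present the same computation the paper carries out.
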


\comment Unlike Theorem~\ref{uniform_newton_grad_sufficient_cond}, the assumptions in Theorem~\ref{uniform_newton_grad_sufficient_cond_relax_2} are more relaxed and the sampling for both are done simultaneously, i.e., using the same sample collection $\mathcal{S}$. As a result in order to guarantee linear convergence, using Algorithm~\ref{alg_simult_hessian_grad}, sampling the gradient and the Hessian both must be done, progressively, more accurately.

Using a slight modification of Algorithm~\ref{alg_simult_hessian_grad}, it is even possible to obtain R-superlinear convergence. However, in this case, the sample size increase must be done ``extremely aggressively'', which might render such strategy rather impractical and we only give the following result for completeness. By aggressive, we refer to a very fast rate at which $\epsilon^{(k)}$ is decreased across iterations, compared to a more moderate decrease of the previous results.

\begin{theorem}[R-Superlinear Convergence of Algorithm~\ref{alg_simult_hessian_grad}]
\label{uniform_newton_grad_sufficient_cond_relax_3}
Under the assumptions of Theorem~\ref{uniform_newton_grad_sufficient_cond_relax_2}, Algorithm~\ref{alg_simult_hessian_grad} with 
\begin{align*}
\epsilon^{(0)} &= \epsilon, \\ 
\epsilon^{(k)} &= \rho^{k} \epsilon^{(k-1)}, \quad k=1,2,\ldots
\end{align*}
converges locally R-superlinearly as
\begin{subequations}
\label{sup_lin_grad}
\begin{equation}
\|\xx^{(k)} - \xx^{*}\| \leq \tau^{(k)} \sigma,
\end{equation} 
with probability $(1-2 \delta)^{k}$, where the sequence $\{\tau^{(k)}\}$ satisfies
\begin{align}
\tau^{(1)} &= \rho, \\
\frac{\tau^{(k)}}{\tau^{(k-1)}} &= \rho^{k-1}, \quad k = 2,3,\ldots.
\end{align}
\end{subequations}
\end{theorem}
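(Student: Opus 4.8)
The plan is to mirror the inductive argument behind the R-linear result (Theorem~\ref{uniform_newton_grad_sufficient_cond_relax_2}) while tracking the faster-decaying accuracy schedule. First I would observe that at iteration $k$ the Hessian and the gradient are both sampled simultaneously with accuracy $\epsilon^{(k)} \leq \epsilon$, so the per-iteration error recursion of Theorem~\ref{uniform_newton_convergence_grad_relax_2} applies verbatim, giving, with probability $1-2\delta$,
\[
\|\xx^{(k+1)} - \xx^{*}\| \leq \eta^{(k)} + \xi^{(k)}\|\xx^{(k)} - \xx^{*}\|^{2}, \qquad \eta^{(k)} = \frac{2\epsilon^{(k)}}{(1-\epsilon^{(k)})\gamma^{*}}, \quad \xi^{(k)} = \frac{L}{(1-\epsilon^{(k)})\gamma^{*}}.
\]
Since $\epsilon^{(k)} \leq \epsilon$ I would bound $\xi^{(k)} \leq \xi \defeq L/((1-\epsilon)\gamma^{*})$ and $\eta^{(k)} \leq (\epsilon^{(k)}/\epsilon)\,\eta$ with $\eta \defeq 2\epsilon/((1-\epsilon)\gamma^{*})$, reducing every iteration to the same two constants $(\eta,\xi)$ used in the R-linear proof, now modulated by the shrinking factor $\epsilon^{(k)}/\epsilon$.

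Two algebraic identities drive the argument and I would record them up front: from the definition of $\sigma$ in~\eqref{initial_cond_grad_2} one has $\xi\sigma = (\rho-\rho_{0})/2$, and condition~\eqref{epsilon_cond} is equivalent to $\eta \leq \rho_{0}\sigma$. Unwinding the schedule $\epsilon^{(k)}=\rho^{k}\epsilon^{(k-1)}$ yields the closed form $\epsilon^{(k)} = \rho^{k(k+1)/2}\epsilon$, so that $\eta^{(k)} \leq \rho^{k(k+1)/2}\rho_{0}\sigma$; likewise unwinding the $\tau$-recursion gives $\tau^{(k)} = \rho^{1+(k-1)k/2}$ for $k\geq 1$, with the convention $\tau^{(0)}\defeq 1$.

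The core is then an induction proving $\|\xx^{(k)}-\xx^{*}\|\leq \tau^{(k)}\sigma$, with base case the hypothesis $\|\xx^{(0)}-\xx^{*}\|\leq\sigma$. For the step, assuming $\|\xx^{(k)}-\xx^{*}\|\leq\tau^{(k)}\sigma\leq\sigma$, substitution into the recursion together with $\eta^{(k)}\leq\rho^{k(k+1)/2}\rho_{0}\sigma$ and $\xi\sigma=(\rho-\rho_{0})/2$ gives, for $k\geq 1$,
\[
\|\xx^{(k+1)}-\xx^{*}\| \leq \rho^{k(k+1)/2}\sigma\Bigl(\rho_{0} + \tfrac{\rho-\rho_{0}}{2}\,\rho^{(k^{2}-3k+4)/2}\Bigr),
\]
after factoring out the common power $\rho^{k(k+1)/2}$ from the floor term and the squared inductive bound $(\tau^{(k)})^{2}=\rho^{2+(k-1)k}$. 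Since $(k^{2}-3k+4)/2 \geq 0$ for all $k$, the bracket is at most $\rho_{0}+(\rho-\rho_{0})/2 = (\rho+\rho_{0})/2 \leq \rho$, and since $\rho\cdot\rho^{k(k+1)/2}=\tau^{(k+1)}$, the step closes; the case $k=0$ is the identical inequality $(\rho+\rho_{0})/2\leq\rho$ checked directly. The probability $(1-2\delta)^{k}$ follows by a union bound over the $k$ independent samplings, and $\tau^{(k+1)}/\tau^{(k)}=\rho^{k}\to 0$ delivers the R-superlinear rate.

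The step I expect to be the main obstacle is the exponent bookkeeping: one must verify that the aggressive schedule makes $\eta^{(k)}$ decay at precisely the rate $\rho^{k(k+1)/2}$ so that the additive gradient-error floor and the quadratic contraction $(\tau^{(k)})^{2}$ both collapse onto that single power and land below $\tau^{(k+1)}$. Lining up the three exponents — of $\epsilon^{(k)}$, of $(\tau^{(k)})^{2}$, and of $\tau^{(k+1)}$ — and in particular checking that the residual power $(k^{2}-3k+4)/2$ stays nonnegative so it can be discarded harmlessly, is the delicate part; everything else reduces to the two constant identities already available from the R-linear case.
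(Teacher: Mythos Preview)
Your argument is correct and follows essentially the same route as the paper's own proof: both invoke the per-iteration recursion of Theorem~\ref{uniform_newton_convergence_grad_relax_2}, bound $\xi^{(k)}\leq\xi^{(0)}$ and $\eta^{(k)}$ by the cumulative power of $\rho$, run the same induction, and isolate the identical residual exponent $(k^{2}-3k+4)/2\geq 0$ before collapsing the bracket to at most $\rho$. The only cosmetic difference is that you work with the closed forms $\epsilon^{(k)}=\rho^{k(k+1)/2}\epsilon$ and $\tau^{(k)}=\rho^{1+k(k-1)/2}$ whereas the paper carries the products $\prod_{i=1}^{k}\rho^{i}$ explicitly; the computations are identical.
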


\section{Examples}
\label{sec:examples}
In this Section, we present a few instances of problems which are of the form~\eqref{obj}. Specifically, examples from generalized linear models (GLM) and linear support vector machine (SVM) are given in Sections~\ref{sec:example_glm} and~\ref{sec:example_svm} respectively.

\subsection{GLMS with Sparsity Constraint}
\label{sec:example_glm}
The class of generalized linear models is used to model a wide variety of regression and classification problems. The process of data fitting using such GLMs usually consists of a training data set containing $n$ response-covariate pairs, and the goal is to predict some output response based on some covariate vector, which is given after the training phase. More specifically, let $(\aa_{i},b_{i}), \; i = 1,2,\cdots,n,$ form such response-covariate  pairs in the training set where $\aa_{i} \in \mathbb{R}^{p}$. The domain of $b_{i}$ depends on the GLM used: for example, in the standard linear Gaussian model $b_{i} \in \mathbb{R}$, in the logistic models for classification, $b_{i} \in \{0, 1\}$, and in Poisson models for count-valued responses, $b_{i} \in \{0, 1, 2, \ldots \}$. See the book~\cite{mccullagh1989generalized} for further details and applications.

Consider \textit{sparse} maximum likelihood (ML) estimation using any GLM with canonical link function. One way to induce sparsity is by using the constraint set, $\mathcal{X}$, as 
\begin{equation*}
\mathcal{X} = \{\xx \in \mathbb{R}^{p}; \|\xx\|_{1} \leq 1 \}.
\end{equation*}
Hence, sparse ML is equivalent to minimizing the negative log-likelihood over $\mathcal{X}$, where the negative log-likelihood of such model can be written as 
\begin{equation*}
F(\xx) = \frac{1}{n} \sum_{i=1}^{n} \left( \Phi(\aa_{i}^{T} \xx) - b_{i} \aa_{i}^{T} \xx \right).
\end{equation*}
The \textit{cumulant generating function}, $\Phi$, determines the type of GLM. For example, $\Phi(t) = 0.5 t^{2}$ gives rise to ordinary least squares formulation (OLS), while $\Phi(t) = \ln\left(1+\exp(t)\right)$ and $\Phi(t) = \exp(t)$ yield logistic regression (LR) and Poisson regression (PR), respectively. It is also easily verified that the gradient and the Hessian of $F$ are
\begin{eqnarray*}
\nabla F(\xx) &=&  \frac{1}{n} \sum_{i=1}^{n} \left( \frac{d \Phi(t)}{d t} \rvert_{t = \aa_{i}^{T} \xx} - b_{i} \right) \aa_{i}, \\
\nabla^{2} F(\xx) &=&  \frac{1}{n} \sum_{i=1}^{n} \left(  \frac{d^{2} \Phi(t)}{d t^{2}} \rvert_{t = \aa_{i}^{T} \xx} \right) \aa_{i} \aa^{T}_{i}.
\end{eqnarray*}

As mentioned in Comment~\ref{grad_sample_size_alg}, in order to use Lemma~\ref{randnla_lemma}, we need to be able to efficiently estimate $G(\xx^{(k)})$ at every iteration or, a priori, have a uniform upper bound for $G(\xx)$ for all $\xx \in \mathcal{D} \cap \mathcal{X}$.  For illustration purposes only, Table~\ref{table_GLM} gives some very rough estimates of the constant $G$ for GLMs with respect to $\mathcal{X}$. Note that, here, $G$ is a uniform bound for $G(\xx)$. For an example where a per-iteration bound can be efficiently computed see Section~\ref{sec:example_svm}. 

\begin{table}[htb]
\centering
\scalebox{1}{
 \begin{tabular}{||c || c || c || c ||} 
 \hline
  & OLS & LR &  PR  \\ [0.5ex] 
 \hline\hline
 $\nabla f_{i}(\xx)$ & $\left( \aa_{i}^{T} \xx - b_{i} \right) \aa_{i}$ & $( \frac{1}{1+e^{-\aa_{i}^{T} \xx}}  - b_{i} ) \aa_{i}$ & $( e^{\aa_{i}^{T} \xx} - b_{i} ) \aa_{i}$ \\ [2ex]
$G$ & $\max_{i} (\|\aa_{i}\|_{\infty} + |b_{i}|) \|\aa_{i}\|$ & $\max_{i} (\frac{1}{1+e^{-\|\aa_{i}\|_{\infty}}}  + |b_{i}|) \|\aa_{i}\|$ & $\max_{i} (e^{\|\aa_{i}\|_{\infty}} + |b_{i}|) \|\aa_{i}\|$ \\ [2ex] 
 \hline
 \end{tabular}}
\caption{Estimates for $G$ in GLMs with sparsity constraint
\label{table_GLM}}
\end{table}

\subsection{SVM with Smooth Quadratic Hinge Loss}
\label{sec:example_svm}
A support vector machine constructs a hyperplane or set of hyperplanes in a high-dimensional space, which can be used for classification, regression, or other tasks. The vast majority of text books and articles introducing SVM very briefly state the the primal optimization problem, and then go directly to the dual formulation~\cite{vapnik1998statistical,burges1998tutorial,scholkopf2002learning}. Primal optimizations
of linear SVMs have already been studied by~\cite{keerthi2005modified,chapelle2007training,mangasarian2002finite}. This is so since the dual problem does not scale well with the number of data points, e.g., $\mathcal{O}(n^{3})$ for some approaches,~\cite{woodsend2011exploiting}, the primal might be better-suited for optimization of linear SVMs.

Given a training set ${(\aa_{i}, b_{i})}, \; i=1,2,\ldots,n$, $\aa_{i} \in \mathbb{R}^{p}$ and $b_{i} \in \{+1, −1\}$, recall that the primal
SVM optimization problem is usually written as:
\begin{equation*}
\begin{aligned}
\min_{\xx} &\quad \frac{C}{n} \sum_{i=1}^{n} c_{i}^{m} + \hf \| \xx \|^{2}\\
\text{s.t.} & \quad b_{i} \xx^{T}\aa_{i} \geq 1-c_{i}; \quad \forall i \\
& \quad c_{i} \geq 0; \quad \forall i
\end{aligned}
\end{equation*}
where $C > 0$ and $m$ is either $1$ (hinge loss) or 2 (quadratic loss). Note that, here, the coordinate corresponding to the intercept is implicitly included in each $\aa_{i}$. The unconstrained optimization formulation of the above is 
\begin{equation*}
\min_{\xx,y} \frac{C}{n} \sum_{i=1}^{n} \ell(b_{i}, \xx^{T}\aa_{i}) + \hf \| \xx \|^{2},
\end{equation*}
with $\ell(v,w) = \max(0,1-vw)^{m}$ or any other loss function (e.g. Huber loss function). For sub-sampled newton methods where we require $\ell$ to be smooth, we consider the case $m=2$, yielding the SVM with smooth quadratic hinge Loss. Now denoting 
\begin{equation*}
\mathcal{I} = \{i; \; b_{i} \xx^{T}\aa_{i} < 1\},
\end{equation*}
the gradient and the Hessian of $f_{i}$ is given as follows
\begin{eqnarray*}
\nabla f_{i}(\xx) &=& \left ( 2 C (\xx^{T}\aa_{i} - b_{i}) \aa_{i} \right) \mathbbm{1}_{i \in \mathcal{I}} + \xx \\
\nabla^{2} f_{i}(\xx) &=& \left(2 C \aa_{i}\aa_{i}^{T}\right) \mathbbm{1}_{i \in \mathcal{I}} + \mathbb{I},
\end{eqnarray*}
where $\mathbbm{1}_{i \in \mathcal{I}}$ is the indicator function of the set $\mathcal{I}$. A very rough estimate of $G(\xx)$ can be obtained by
\begin{equation*}
G(\xx) \leq \|\xx\| \max_{i} (2 C \|\aa_{i}^{T}\|^{2} + 1 ) + 2C\max_{i} |b_{i}| \|\aa_{i}\|.
\end{equation*}
As mentioned before, in practice, as a pre-processing and before starting the algorithms, one can pre-compute the quantities which depend only on $(\aa_{i}, b_{i})$'s. Then updating $G(\xx)$ at every iteration is done very efficiently as it is just a matter of computing $\|\xx\|$.


\section{Conclusion}
\label{sec:conclusion}
In this paper, we studied the local convergence behavior of sub-sampled Newton methods for constrained optimization in two general settings. We first studied the convergence behavior  of the algorithm in which only the Hessian is sub-sampled and the full gradient is used. We showed that the error recursion has a composite nature where it is first dominated by a quadratic term and is subsequently transformed to  a linear term near an optimum. In such a setting, we proved locally Q-linear convergence of the proposed algorithm. We also showed that by increasing the accuracy of such sub-sampling as the iterations progress, one can indeed recover superlinear rates. We then studied the situations in which the sub-sampled Hessian is regularized by either modifying its spectrum or by ridge-type regularization. We showed that such regularization can only be beneficial at early stages of the algorithm, i.e., the phase where the quadratic term dominates the error recursion, and we need to revert to using the unmodified sub-sampled Hessian as iterates get closer to the optimum.

Finally, we studied the local convergence of a fully stochastic algorithm in which both Hessian and the gradients are sub-sampled. Such sub-sampling can be done independently for the Hessian and the gradients, or simultaneously where one sample collection of indices is used for sub-sampling both. As before, for all of our results, we showed the composite rate of the error recursions. We also showed that by progressively increasing the sub-sampling accuracy of the gradient, we can indeed guarantee a local R-linear convergence rate. In addition, we argued that through a more aggressive sub-sampling strategy, one can obtain a superlinear rate. 

For all of our results, we showed that the error bounds exhibit a composite behavior whose dominating term varies according to the distance of iterates to optimality. For example, for Hessian sub-sampling, we showed that, when far from a local optimum, the dominating error term is quadratic which then transitions to linear when the iterates are within a small enough region around that local optimum. These results might give a better control over various tradeoffs which exhibit themselves in different applications, e.g., practitioners  might require faster running-time while statisticians might be more interested in statistical aspects regarding the recovered solution.

One major drawback of our algorithms is that we require the quadratic sub-problems~\eqref{structural_update_grad} to be solved exactly. This can indeed be a computational challenge in many situations. In SSN1~\cite{romassn1} and in the context of globally convergent sub-sampled algorithms for unconstrained problems, we relax this requirement and solve such sub-problems only approximately. This results in globally convergent sub-sampled algorithms with inexact updates which have much lower per-iteration cost. The extension of such inexact updates for the sub-sampled algorithms for constrained optimization and studying their local convergence behavior are left for future work.


%
\bibliographystyle{plain}
\bibliography{../../../biblio}

\appendix
\section{Proofs}
\label{proofs}

\subsection{Proofs of the theorems and lemmas of Section~\ref{sec:sub_hessian}}
Here we give the proofs of all of the results in Section~\ref{sec:sub_hessian}.

\begin{proof}[Proof of Lemma~\ref{hoeffding_lemma}]
Let $U$ be an orthonormal basis for the cone $\mathcal{K}$ defined in~\eqref{cone}. Consider $|\mathcal{S}|$ i.i.d random matrices $H_{j}(\xx), j=1,2,\ldots,|\mathcal{S}|$ such that $\Pr(H_{j}(\xx) = \nabla^{2} f_{i}(\xx)) = 1/n; \; \forall i = 1,2,\ldots,n,$. Define
\begin{align*}
&X_{j} \defeq  U^{T} \Big( H_{j}  - \nabla^{2}F(\xx)\Big) U, \\
&H \defeq \frac{1}{|\mathcal{S}|} \sum_{j \in \mathcal{S}} H_{j}, \\
&X \defeq \sum_{j \in \mathcal{S}} X_{j} = |\mathcal{S}| U^{T} \Big( H - \nabla^{2}F(\xx) \Big) U.
\end{align*} 
Note that $\Ex(X_{j}) = 0$ and for $H_{j} = \nabla^{2} f_{1}(\xx)$ we have
\begin{equation*}
\|X_{j}^{2}\| = \|X_{j}\|^{2} \leq \| U^{T} \big(\frac{n-1}{n} \nabla^{2} f_{1}(\xx) - \sum_{i=2}^{n} \frac{1}{n} \nabla^{2} f_{i}(\xx) \big) U\|^{2}\leq 4 \left(\frac{n-1}{n}\right)^{2} K^{2} \leq 4 K^{2}.
\end{equation*}
Hence we can apply Operator-Bernstein inequality~\cite[Theorem 1]{gross2010note} to get
\begin{equation*}
\Pr\Big(\|H - \nabla^{2}F(\xx) \|_{\mathcal{K}} \geq \epsilon \gamma \Big) = \Pr\Big(\|X\|_{\mathcal{K}} \geq \epsilon |\mathcal{S}| \gamma \Big) \leq 2p \exp\{-\epsilon^{2} |\mathcal{S}| \gamma^{2} /(16 K^{2})\}.
\end{equation*}
Noting that for any symmetric matrix $A$,
\begin{equation*}
\|A\|_{\mathcal{K}} = \max \left\{ \lambda_{\max}^{\mathcal{K}}(A), - \lambda_{\min}^{\mathcal{K}}(A)\right\},
\end{equation*}
we define the ``good ''event 
\begin{align*}
\mathcal{A} \defeq &\Big\{ \|H - \nabla^{2}F(\xx) \|_{\mathcal{K}} \leq \epsilon \gamma \Big\} \\
 = &\Big\{ -\epsilon \gamma \leq \lambda_{\min}^{\mathcal{K}}\left(H - \nabla^{2}F(\xx) \right) \leq \lambda_{\max}^{\mathcal{K}}\left(H - \nabla^{2}F(\xx) \right) \leq \epsilon \gamma \Big\}.
\end{align*}
Now having a sample size as in~\eqref{uniform_sample_size_Hoeffding}, we get that 
\begin{equation*}
2 p \exp\{-\epsilon^{2} |\mathcal{S}| \gamma^{2} /(16 K^{2})\} \leq \delta,
\end{equation*}
which, in turn, gives
\begin{equation*}
\Pr(\mathcal{A}) \geq 1-\delta.
\end{equation*}
Consider eigenvalues of a matrix $X \in \mathbb{R}^{p \times p}$ ordered as $\lambda_{1}(X) \geq \lambda_{2}(X) \geq \cdots \geq \lambda_{p}(X)$. On the event, $\mathcal{A}$, and for any $1 \leq i,j \leq p$ such that $i+j = p+1$, by Weyl's inequality~\cite[Theorem 8.4.11, Eqn.\ (8.4.11)]{bernstein2009matrix}, we have 
\begin{align*}
\lambda_{i}^{\mathcal{K}} \left( H(\xx) \right) - \lambda_{i}^{\mathcal{K}} \left( \nabla^{2}F(\xx) \right)  &= \lambda_{i}^{\mathcal{K}} \left( H(\xx) \right) + \lambda_{j}^{\mathcal{K}} \left( - \nabla^{2}F(\xx) \right) \\
&\leq \lambda_{\max}^{\mathcal{K}} \left( H(\xx) - \nabla^{2}F(\xx) \right) \\
&\leq \epsilon \gamma \\
&\leq \epsilon \lambda_{i}^{\mathcal{K}} \left( \nabla^{2}F(\xx) \right).
\end{align*}
Hence, we get
\begin{equation*}
\lambda_{i}^{\mathcal{K}} \left( H(\xx) \right) \leq (1+\epsilon) \lambda_{i}^{\mathcal{K}} \left( \nabla^{2}F(\xx) \right).
\end{equation*}
Similarly, using~\cite[Theorem 8.4.11, Eqn.\ (8.4.12)]{bernstein2009matrix}, and for any $1 \leq i,j \leq p$ such that $i+j = p+1$, we have 
\begin{align*}
\lambda_{i}^{\mathcal{K}} \left( H(\xx) \right) - \lambda_{i}^{\mathcal{K}} \left( \nabla^{2}F(\xx) \right)  &= \lambda_{i}^{\mathcal{K}} \left( H(\xx) \right) + \lambda_{j}^{\mathcal{K}} \left( - \nabla^{2}F(\xx) \right)\\
&\geq \lambda_{\min}^{\mathcal{K}} \left( H(\xx) - \nabla^{2}F(\xx) \right) \\
&\geq - \epsilon \gamma \\
&\geq - \epsilon \lambda_{i}^{\mathcal{K}} \left( - \nabla^{2}F(\xx) \right).
\end{align*}
Hence, we get
\begin{equation*}
\lambda_{i}^{\mathcal{K}} \left( H(\xx) \right) \geq (1-\epsilon) \lambda_{i}^{\mathcal{K}} \left( \nabla^{2}F(\xx) \right), \quad \forall i \in \{ 1,2,\ldots,p\},
\end{equation*}
and the result follows.\qed
\end{proof}

\begin{proof}[Proof of Lemma~\ref{hoeffding_lemma_const}]
Let $X_{j}$ be as in the proof of Lemma~\ref{hoeffding_lemma}. Since $f_{i}$ is convex, we have $\nabla^{2} f_{i}(\xx) \succeq 0$, $\nabla^{2} F(\xx) \succeq 0$, and for $H_{j} =  \nabla^{2} f_{i}(\xx)$, we have
\begin{equation*}
-K \mathbb{I} \preceq - U^{T} \nabla^{2} F(\xx) U \preceq X_{j} \preceq  U^{T}\nabla^{2} f_{i}(\xx) U \preceq K \mathbb{I}.
\end{equation*}
So, it follows that 
\begin{equation*}
X^{2}_{j} \preceq K^{2} \mathbb{I},
\end{equation*}
and hence, after obtaining the same bound for $Y_{j}=-X_{j}$ and applying Operator-Bernstein inequalit as in the proof of Lemma~\ref{hoeffding_lemma}, we get a lower bound for the sample size as in~\eqref{uniform_sample_size_Hoeffding_const}. 
\qed
\end{proof}

\begin{proof}[Proof of Lemma~\ref{hoeffding_lemma_intrinsic}]
Let $X_{j}$ be as in the proof of Lemma~\ref{hoeffding_lemma}. We have
\begin{eqnarray*}
\Ex(X^{2}_{j}) &=& \Ex\left[ \left( U^{T} \left( \nabla^{2}F(\xx) - H_{j} \right) U \right)^{2} \right]\\
&=& (U^{T} \nabla^{2}F(\xx) U)^{2} - U^{T} \nabla^{2}F(\xx) U U^{T} \Ex (H_{j}) U \\
&& - U^{T} \Ex (H_{j}) U U^{T} \nabla^{2}F(\xx) U +  \Ex (U^{T} H_{j} U)^{2} \\
&=& \frac{1}{n} \sum_{i=1}^{n} (U^{T} \nabla^{2}f_{i}(\xx) U)^{2} - (U^{T} \nabla^{2}F(\xx) U)^{2} \\
&\preceq& \frac{1}{n} \sum_{i=1}^{n} (U^{T} \nabla^{2}f_{i}(\xx) U)^{2} = \frac{V}{|\mathcal{S}|}.
\end{eqnarray*}
We see that 
\begin{equation*}
\|V\| = \|\frac{|\mathcal{S}|}{n} \sum_{i=1}^{n} (U^{T} \nabla^{2}f_{i}(\xx) U)^{2}\| \leq \frac{|\mathcal{S}|}{n} \sum_{i=1}^{n} \|U^{T} \nabla^{2}f_{i}(\xx) U\|^{2} \leq |\mathcal{S}| K^{2}.
\end{equation*}
Hence if 
\begin{equation}
\epsilon |\mathcal{S}| \gamma \geq (\sqrt{|\mathcal{S}|}  + \frac{2}{3})K
\label{mat_bern_t}
\end{equation}
we can apply Matrix Bernstein using the intrinsic dimension~\cite[Theorem 7.7.1]{tropp2015introduction}, to get for $\epsilon \leq 1/2$,
\begin{eqnarray*}
\Pr\Big(\lambda_{\max}(X) \geq \epsilon |\mathcal{S}| \gamma \Big) &\leq& 4d \exp\{-\epsilon^{2} |\mathcal{S}| \gamma^{2} /(2 K^{2} + 4 \epsilon \gamma K/3 ) \} \\
&\leq& 4d \exp\{-3 \epsilon^{2} |\mathcal{S}| \gamma^{2} /(16 K^{2})\}. 
\end{eqnarray*}
Applying the same bound for $Y_{j} = -X_{j}$ and $Y = \sum_{j \in \mathcal{S}} {Y_{j}}$, followed by using the union bound, we get the desired result. It is also easily verified that~\eqref{uniform_sample_size_Bernstein} implies~\eqref{mat_bern_t}, and so our derivation is valid. 
\qed
\end{proof}

Note that for the simplicity of the presentation of the proofs of our main results, we will make use of the following lemma, which is just a simple variant of Lemma~\ref{hoeffding_lemma}, and whose proof is very similar to that of Lemma~\ref{hoeffding_lemma} 
\begin{lemma}[Uniform Hessian Sub-Sampling: Matrix Hoeffding]
\label{hoeffding_lemma_2}
Given any $0 < \epsilon < 1$, $0 < \delta < 1$  and $\xx \in \mathcal{D} \cap \mathcal{X}$, if the sample size $|\mathcal{S}|$ is as in~\eqref{uniform_sample_size_Hoeffding}, then for $H(\xx)$ defined in~\eqref{subsampled_H}, we have all of the following statements, simultaneously, with probability $1-\delta$:
\begin{align}
&\|H - \nabla^{2}F(\xx) \|_{\mathcal{K}} \leq \epsilon \gamma, \label{hoeffding_absolute_error}\\
& \lambda_{\min}^{\mathcal{K}} \left( H(\xx) \right) \geq (1-\epsilon) \gamma , \label{hoeffding_invertibility} \\
& \frac{\|H - \nabla^{2}F(\xx) \|_{\mathcal{K}}}{\lambda_{\min}^{\mathcal{K}} \left( H(\xx) \right)} \leq \frac{\epsilon}{1-\epsilon} . \label{linear_rate_factor}
\end{align}
\end{lemma}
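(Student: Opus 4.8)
The plan is to observe that all three claims are immediate consequences of the single concentration estimate already derived in the proof of Lemma~\ref{hoeffding_lemma}, combined with a Weyl-type eigenvalue perturbation argument; once the ``good event'' is in hand, everything else is deterministic. First I would reuse the setup of Lemma~\ref{hoeffding_lemma} verbatim: for the i.i.d.\ matrices $H_{j}(\xx)$ and their average $H$, the Operator-Bernstein inequality gives
\begin{equation*}
\Pr\Big(\|H - \nabla^{2}F(\xx)\|_{\mathcal{K}} \geq \epsilon \gamma\Big) \leq 2p \exp\{-\epsilon^{2}|\mathcal{S}|\gamma^{2}/(16K^{2})\},
\end{equation*}
and the sample size~\eqref{uniform_sample_size_Hoeffding} forces the right-hand side to be at most $\delta$. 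Hence the event $\mathcal{A} \defeq \{\|H - \nabla^{2}F(\xx)\|_{\mathcal{K}} \leq \epsilon \gamma\}$ satisfies $\Pr(\mathcal{A}) \geq 1-\delta$, which is exactly claim~\eqref{hoeffding_absolute_error}. All remaining work takes place on $\mathcal{A}$.

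Next, to establish~\eqref{hoeffding_invertibility} on $\mathcal{A}$, I would invoke an eigenvalue perturbation bound. Writing $U$ for an orthonormal basis of $\mathcal{K}$ and using the identities $\lambda_{\min}^{\mathcal{K}}(A) = \lambda_{\min}(U^{T}AU)$ and $\|A\|_{\mathcal{K}} = \|U^{T}AU\|$, Weyl's inequality applied to the symmetric matrices $U^{T}HU$ and $U^{T}\nabla^{2}F(\xx)U$ yields
\begin{equation*}
\lambda_{\min}^{\mathcal{K}}(H) \geq \lambda_{\min}^{\mathcal{K}}\left(\nabla^{2}F(\xx)\right) - \|H - \nabla^{2}F(\xx)\|_{\mathcal{K}} \geq \gamma - \epsilon \gamma = (1-\epsilon)\gamma,
\end{equation*}
where the last step combines the strong-convexity bound $\lambda_{\min}^{\mathcal{K}}(\nabla^{2}F(\xx)) \geq \gamma$ (cf.~\eqref{F_strong_x}) with the bound defining $\mathcal{A}$.

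Finally, claim~\eqref{linear_rate_factor} follows by dividing~\eqref{hoeffding_absolute_error} by~\eqref{hoeffding_invertibility}: on $\mathcal{A}$,
\begin{equation*}
\frac{\|H - \nabla^{2}F(\xx)\|_{\mathcal{K}}}{\lambda_{\min}^{\mathcal{K}}(H)} \leq \frac{\epsilon \gamma}{(1-\epsilon)\gamma} = \frac{\epsilon}{1-\epsilon}.
\end{equation*}
Since all three inequalities hold on the single event $\mathcal{A}$, they hold simultaneously with probability at least $1-\delta$, as claimed. There is essentially no obstacle here beyond bookkeeping; the only point demanding a little care is the legitimacy of the Weyl step in the $\mathcal{K}$-restricted setting, which is valid precisely because $\lambda_{\min}^{\mathcal{K}}(\cdot)$ and $\|\cdot\|_{\mathcal{K}}$ are genuine eigenvalues and the spectral norm of the compressed matrices $U^{T}(\cdot)U$, so the standard perturbation inequality applies unchanged.
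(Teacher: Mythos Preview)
Your proposal is correct and matches the paper's intended argument: the paper itself omits a separate proof of this lemma, noting only that it is a simple variant of Lemma~\ref{hoeffding_lemma} whose proof is very similar. You have reconstructed precisely that argument --- reuse the Operator-Bernstein bound to get the good event $\mathcal{A}$, then read off~\eqref{hoeffding_absolute_error} directly, apply Weyl on the compressed matrices $U^{T}(\cdot)U$ to get~\eqref{hoeffding_invertibility}, and divide for~\eqref{linear_rate_factor} --- so there is nothing to add.
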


\subsubsection{Structural Lemmas}
\label{sec:strutural_lemmas_hess}
We first give some structural lemma which will be the foundation of our main results for this Section.
\begin{lemma}[Structural Lemma 1]
\label{structural_lemma_eig}
Let Assumptions~\eqref{F_Lip} and~\eqref{strong_convex_boundedness} hold. Also assume that
\begin{equation}
\pp^{T} H(\xx^{(k)}) \pp  > 0 , \quad \forall \; \pp \in \mathcal{K} \setminus \{0\}.
\label{H_strict_convex}
\end{equation}
For the update~\eqref{structural_update}, we have
\begin{equation*}
\|\xx^{(k+1)} - \xx^{*}\| \leq \rho_{0}\|\xx^{(k)} - \xx^{*}\| + \xi  \|\xx^{(k)} - \xx^{*}\|^{2},
\end{equation*}
where
\begin{equation*}
\rho_{0} \defeq \frac{\left\| H(\xx^{(k)}) - \alpha_{k} \nabla^{2}F(\xx^{(k)}) \right\|_{\mathcal{K}} }{\lambda_{\min}^{\mathcal{K}} \left( H(\xx^{(k)}) \right)}, \quad \xi \defeq \frac{\alpha_{k} L}{2 \lambda_{\min}^{\mathcal{K}} \left( H(\xx^{(k)}) \right)}.
\end{equation*}
\end{lemma}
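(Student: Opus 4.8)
The plan is to read off the error recursion from the first-order optimality (variational) characterizations of $\xx^{(k+1)}$ and of $\xx^{*}$, and then control the resulting cross terms with a Taylor expansion of the gradient combined with the Hessian-Lipschitz assumption~\eqref{F_Lip}. First I would record the optimality condition for the strictly convex (along $\mathcal{K}$, by~\eqref{H_strict_convex}) quadratic subproblem~\eqref{structural_update}: its minimizer satisfies
\[
\Big( \nabla F(\xx^{(k)}) + \tfrac{1}{\alpha_{k}} H(\xx^{(k)})(\xx^{(k+1)} - \xx^{(k)}) \Big)^{T}(\xx - \xx^{(k+1)}) \geq 0, \quad \forall \xx \in \mathcal{D}\cap\mathcal{X},
\]
which I would specialize to $\xx = \xx^{*}$. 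Optimality of $\xx^{*}$ for $F$ over $\mathcal{D}\cap\mathcal{X}$ gives the companion inequality $\nabla F(\xx^{*})^{T}(\xx^{(k+1)} - \xx^{*}) \geq 0$. Adding the two and writing $\ee_{k} \defeq \xx^{(k)} - \xx^{*}$, $\ee_{k+1} \defeq \xx^{(k+1)} - \xx^{*}$ (so that $\xx^{(k+1)} - \xx^{(k)} = \ee_{k+1} - \ee_{k}$), then multiplying through by $\alpha_{k}$ and rearranging, yields
\[
\ee_{k+1}^{T} H(\xx^{(k)}) \ee_{k+1} \leq \ee_{k+1}^{T} H(\xx^{(k)}) \ee_{k} + \alpha_{k}\big(\nabla F(\xx^{*}) - \nabla F(\xx^{(k)})\big)^{T} \ee_{k+1}.
\]

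Next I would replace the gradient difference by its integral form $\nabla F(\xx^{*}) - \nabla F(\xx^{(k)}) = -\bar{H}\,\ee_{k}$, where $\bar{H} \defeq \int_{0}^{1} \nabla^{2}F\big(\xx^{(k)} + t(\xx^{*}-\xx^{(k)})\big)\,dt$, which collapses the right-hand side to $\ee_{k+1}^{T}\big(H(\xx^{(k)}) - \alpha_{k}\bar{H}\big)\ee_{k}$. The key algebraic step is the splitting
\[
H(\xx^{(k)}) - \alpha_{k}\bar{H} = \big(H(\xx^{(k)}) - \alpha_{k}\nabla^{2}F(\xx^{(k)})\big) + \alpha_{k}\big(\nabla^{2}F(\xx^{(k)}) - \bar{H}\big),
\]
so that the first summand manufactures exactly the numerator of $\rho_{0}$, while the second is governed by the Lipschitz-Hessian assumption.

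To finish I would bound each term using the $\mathcal{K}$-restricted quantities. Both $\ee_{k}$ and $\ee_{k+1}$ lie in $\mathcal{K}$ (take $t=1$ in~\eqref{cone}, since $\xx^{(k)}, \xx^{(k+1)} \in \mathcal{D}\cap\mathcal{X}$), so the left-hand side is at least $\lambda_{\min}^{\mathcal{K}}\big(H(\xx^{(k)})\big)\,\|\ee_{k+1}\|^{2}$; the first summand contributes at most $\|H(\xx^{(k)}) - \alpha_{k}\nabla^{2}F(\xx^{(k)})\|_{\mathcal{K}}\,\|\ee_{k+1}\|\,\|\ee_{k}\|$; and for the second I would note that $F$ inherits the $L$-Lipschitz Hessian of the $f_{i}$ by averaging over $i$, so that $\|\nabla^{2}F(\xx^{(k)}) - \bar{H}\|_{\mathcal{K}} \leq \int_{0}^{1} L\,t\,\|\ee_{k}\|\,dt = \tfrac{L}{2}\|\ee_{k}\|$, giving a bound of $\tfrac{\alpha_{k}L}{2}\|\ee_{k+1}\|\,\|\ee_{k}\|^{2}$. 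Dividing through by $\|\ee_{k+1}\|\,\lambda_{\min}^{\mathcal{K}}\big(H(\xx^{(k)})\big)$ (the case $\ee_{k+1}=0$ being trivial) produces precisely the claimed recursion with $\rho_{0}$ and $\xi$ as stated.

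I expect the main obstacle to be bookkeeping rather than conceptual: getting the signs right when combining the two variational inequalities, and rigorously justifying that every inner product and operator norm may be replaced by its $\mathcal{K}$-restricted counterpart. The latter hinges on verifying that $\ee_{k}, \ee_{k+1} \in \mathcal{K}$ and on the compatibility of the Hessian-Lipschitz bound~\eqref{F_Lip} — which is stated only for increments lying in $\mathcal{K}$ — with the integration path $t \mapsto \xx^{(k)} + t(\xx^{*}-\xx^{(k)})$, whose increments are positive multiples of $\ee_{k} \in \mathcal{K}$ and hence remain in the cone.
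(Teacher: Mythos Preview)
Your proposal is correct and follows essentially the same route as the paper's proof: combine the variational inequality for $\xx^{(k+1)}$ (at $\xx=\xx^{*}$) with the first-order optimality of $\xx^{*}$, write the gradient difference as an averaged Hessian acting on $\ee_{k}$, split off $H(\xx^{(k)}) - \alpha_{k}\nabla^{2}F(\xx^{(k)})$, and control the remainder via the $\mathcal{K}$-restricted Lipschitz bound before dividing by $\lambda_{\min}^{\mathcal{K}}(H(\xx^{(k)}))\|\ee_{k+1}\|$. The only cosmetic difference is that the paper parametrizes the segment as $\xx^{*}+t(\xx^{(k)}-\xx^{*})$ (giving a $(1-t)$ factor in the integral) rather than your $\xx^{(k)}+t(\xx^{*}-\xx^{(k)})$ (giving a $t$ factor), which of course yields the same $L/2$ constant.
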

\begin{proof}
Define $\Delta_{k} \defeq \xx^{(k)} - \xx^{*}$. By optimality of $\xx^{(k+1)}$ in~\eqref{structural_update}, we have for any $\xx \in \mathcal{D} \cap \mathcal{X}$,
\begin{equation*}
(\xx - \xx^{(k+1)})^{T}\nabla F(\xx^{(k)}) + \frac{1}{\alpha_{k}}(\xx - \xx^{(k+1)})^{T} H(\xx^{(k)}) (\xx^{(k+1)} - \xx^{(k)}) \geq 0.
\end{equation*}
In particular, setting $\xx = \xx^{*}$, and noting that $\xx^{(k+1)} - \xx^{(k)} = \Delta_{k+1} - \Delta_{k}$, we get
\begin{equation*}
\Delta_{k+1}^{T} H(\xx^{(k)}) \Delta_{k+1} \leq \Delta_{k+1}^{T} H(\xx^{(k)}) \Delta_{k} - \alpha_{k} \Delta_{k+1}^{T} \nabla F(\xx^{(k)}).
\end{equation*}
%
Since by optimality of $\xx^{*}$, we have $\nabla F(\xx^{*})^{T} (\xx^{(k+1)} - \xx^{*}) \geq 0$, it follows that
\begin{equation*}
\Delta_{k+1}^{T} H(\xx^{(k)}) \Delta_{k+1} \leq \Delta_{k+1}^{T} H(\xx^{(k)}) \Delta_{k} - \alpha_{k}\Delta_{k+1}^{T} \nabla F(\xx^{(k)}) + \alpha_{k}\Delta_{k+1}^{T} \nabla F(\xx^{*}).
\end{equation*}
Now, by the relation
\begin{equation*}
\nabla F(\xx^{(k)}) - \nabla F(\xx^{*}) = \left(\int_{0}^{1} \nabla^{2} F\big(\xx^{*} + t(\xx^{(k)} - \xx^{*})\big) d t \right) (\xx^{(k)} - \xx^{*}) ,
\end{equation*}
we have
\begin{eqnarray*}
\Delta_{k+1}^{T} H(\xx^{(k)}) \Delta_{k+1} &\leq& \Delta_{k+1}^{T} H(\xx^{(k)}) \Delta_{k} - \alpha_{k}\Delta_{k+1}^{T} \left(\int_{0}^{1} \nabla^{2} F\big(\xx^{*} + t(\xx^{(k)} - \xx^{*})\big) d t \right) \Delta_{k} \\
&=& \Delta_{k+1}^{T} H(\xx^{(k)})  \Delta_{k} - \alpha_{k}\Delta_{k+1}^{T} \nabla^{2} F(\xx^{(k)}) \Delta_{k} \\
&& + \alpha_{k}\Delta_{k+1}^{T} \nabla^{2} F(\xx^{(k)}) \Delta_{k} - \alpha_{k}\Delta_{k+1}^{T} \left(\int_{0}^{1} \nabla^{2} F\big(\xx^{*} + t(\xx^{(k)} - \xx^{*})\big) d t \right) \Delta_{k} \\
&=& \Delta_{k+1}^{T} \Big( H(\xx^{(k)}) - \alpha_{k} \nabla^{2}F(\xx^{(k)}) \Big) \Delta_{k} \\
&&  + \alpha_{k}\Delta_{k+1}^{T} \Big( \int_{0}^{1} \nabla^{2} F(\xx^{(k)}) - \nabla^{2} F\big(\xx^{*} + t(\xx^{(k)} - \xx^{*})\big) d t \Big) \Delta_{k} \\
&\leq& \left\| H(\xx^{(k)}) - \alpha_{k} \nabla^{2}F(\xx^{(k)}) \right\|_{\mathcal{K}} \|\Delta_{k}\| \|\Delta_{k+1}\| \\
&& + \alpha_{k} \int_{0}^{1} \|\nabla^{2} F(\xx^{(k)})  - \nabla^{2} F\big(\xx^{*} + t(\xx^{(k)} - \xx^{*})\big)\|_{\mathcal{K}} d t \|\Delta_{k}\| \|\Delta_{k+1}\| \\
&\leq& \left\| H(\xx^{(k)}) - \alpha_{k} \nabla^{2}F(\xx^{(k)}) \right\|_{\mathcal{K}} \|\Delta_{k}\| \|\Delta_{k+1}\| \\
&& + \alpha_{k} L \|\Delta_{k}\|^{2} \|\Delta_{k+1}\| \int_{0}^{1} (1-t) d t\\
&\leq& \left\| H(\xx^{(k)}) - \alpha_{k} \nabla^{2}F(\xx^{(k)}) \right\|_{\mathcal{K}} \|\Delta_{k}\| \|\Delta_{k+1}\| + \alpha_{k} \frac{L}{2} \|\Delta_{k}\|^{2} \|\Delta_{k+1}\|,
\end{eqnarray*}
where $\| A \|_{\mathcal{K}}$ is defined as in~\eqref{norm_cone_mat}.

On the other hand,we have
\begin{equation*}
\Delta_{k+1}^{T} H(\xx^{(k)}) \Delta_{k+1} \geq \lambda_{\min}^{\mathcal{K}} \left( H(\xx^{(k)}) \right) \|\Delta_{k+1}\|^{2}.
\end{equation*}
By Assumption~\eqref{H_strict_convex}, we have $\lambda_{\min}^{\mathcal{K}} \left( H(\xx^{(k)}) \right) > 0$. Hence we finally get
\begin{equation*}
\|\Delta_{k+1}\| \leq \frac{\left\| H(\xx^{(k)}) - \alpha_{k} \nabla^{2}F(\xx^{(k)}) \right\|_{\mathcal{K}}}{\lambda_{\min}^{\mathcal{K}} \left( H(\xx^{(k)}) \right)} \|\Delta_{k}\| + \alpha_{k} \frac{L}{ 2 \lambda_{\min}^{\mathcal{K}} \left( H(\xx^{(k)}) \right)}  \|\Delta_{k}\|^{2}.
\end{equation*}
\qed
\end{proof}

\comment Note that for the case of $H(\xx^{(k)}) = \nabla^{2} F(\xx^{(k)})$ and $\alpha_{k} = 1$, we exactly recover the convergence rate for the standard Newton's method~\cite[Proposition 1.4.1]{bertsekas1999nonlinear}.

Under the relaxed assumptions~\eqref{strong_convex_boundedness_opt}, we have the following result:
\begin{lemma}[Structural Lemma 2]
\label{structural_lemma_eig_relax}
Let Assumptions~\eqref{F_Lip} and~\eqref{strong_convex_boundedness_opt} hold. In addition, we assume that 
\begin{equation}
\pp^{T} H(\xx^{*}) \pp  > 0 , \quad \forall \; \pp \in \mathcal{K} \setminus \{0\},
\label{H_strict_convex_opt}
\end{equation}
and 
\begin{equation}
\|H(\xx) - H\big(\xx^{*}) \|_{\mathcal{K}} \leq \Gamma \|\xx - \xx^{*}\|, \quad \forall \xx \in \mathcal{D} \cap \mathcal{X}.
\label{H_Lip}
\end{equation}
Then, for the update~\eqref{structural_update}, if 
\begin{equation}
\|\xx^{(k)} - \xx^{*}\| \leq \frac{\lambda_{\min}^{\mathcal{K}} \left( H(\xx^{*}) \right)}{2\Gamma},
\label{initial_cond_gen}
\end{equation}
we have
\begin{equation*}
\|\xx^{(k+1)} - \xx^{*}\| \leq \rho_{0}  \|\xx^{(k)} - \xx^{*}\| +  \xi  \|\xx^{(k)} - \xx^{*}\|^{2},
\end{equation*}
where 
\begin{equation*}
\rho_{0} \defeq \frac{2 \left\| H(\xx^{*})  - \alpha_{k} \nabla^{2}F(\xx^{*})\right\|_{\mathcal{K}}}{\lambda_{\min}^{\mathcal{K}} \left( H(\xx^{*}) \right)}, \quad
\xi \defeq \frac{2\Gamma + \alpha_{k} L }{\lambda_{\min}^{\mathcal{K}} \left( H(\xx^{*}) \right)}.
\end{equation*}
\end{lemma}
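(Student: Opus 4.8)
The plan is to follow the proof of Lemma~\ref{structural_lemma_eig} as closely as possible, but to pivot all quantities around the optimum $\xx^{*}$ rather than the current iterate $\xx^{(k)}$, using the Lipschitz bound~\eqref{H_Lip} on $H$ and the Hessian-Lipschitz bound~\eqref{F_Lip} to absorb the resulting discrepancies. First I would reuse verbatim the opening of the proof of Lemma~\ref{structural_lemma_eig}: writing $\Delta_{k} \defeq \xx^{(k)} - \xx^{*}$ and invoking the optimality condition for $\xx^{(k+1)}$ in~\eqref{structural_update} with test point $\xx = \xx^{*}$, together with the optimality of $\xx^{*}$ (i.e.\ $\nabla F(\xx^{*})^{T}(\xx^{(k+1)} - \xx^{*}) \geq 0$), to obtain
\[
\Delta_{k+1}^{T} H(\xx^{(k)}) \Delta_{k+1} \leq \Delta_{k+1}^{T} H(\xx^{(k)}) \Delta_{k} - \alpha_{k}\Delta_{k+1}^{T}\big(\nabla F(\xx^{(k)}) - \nabla F(\xx^{*})\big).
\]
This step uses only the optimality of the two points, so it carries over unchanged to the local setting. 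Note that $\Delta_{k},\Delta_{k+1}\in\mathcal{K}$ because $\xx^{(k)},\xx^{(k+1)}\in\mathcal{D}\cap\mathcal{X}$, which justifies the use of the $\mathcal{K}$-restricted norms below.

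Next I would substitute the integral identity $\nabla F(\xx^{(k)}) - \nabla F(\xx^{*}) = \big(\int_{0}^{1}\nabla^{2}F(\xx^{*}+t\Delta_{k})\,dt\big)\Delta_{k}$ and then add and subtract the two ``anchor'' matrices $H(\xx^{*})$ and $\alpha_{k}\nabla^{2}F(\xx^{*})$ (instead of $\nabla^{2}F(\xx^{(k)})$, as in Lemma~\ref{structural_lemma_eig}). This decomposes the right-hand side into three bilinear forms in $\Delta_{k+1},\Delta_{k}$, with matrices $H(\xx^{(k)}) - H(\xx^{*})$, then $H(\xx^{*}) - \alpha_{k}\nabla^{2}F(\xx^{*})$, and finally $\alpha_{k}\int_{0}^{1}\big(\nabla^{2}F(\xx^{*}) - \nabla^{2}F(\xx^{*}+t\Delta_{k})\big)\,dt$. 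Bounding each bilinear form by the $\mathcal{K}$-restricted operator norm of its matrix (cf.~\eqref{norm_cone_mat}), applying~\eqref{H_Lip} to the first, leaving the second as is, and applying~\eqref{F_Lip} to the third (with $\int_{0}^{1}t\,dt=\tfrac12$), gives
\[
\Delta_{k+1}^{T} H(\xx^{(k)}) \Delta_{k+1} \leq \big\| H(\xx^{*}) - \alpha_{k}\nabla^{2}F(\xx^{*})\big\|_{\mathcal{K}}\,\|\Delta_{k}\|\,\|\Delta_{k+1}\| + \Big(\Gamma + \frac{\alpha_{k}L}{2}\Big)\|\Delta_{k}\|^{2}\,\|\Delta_{k+1}\|.
\]

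The genuinely new ingredient — and the step I expect to be the main obstacle — is lower-bounding the left-hand side by a fixed positive multiple of $\|\Delta_{k+1}\|^{2}$. Whereas Lemma~\ref{structural_lemma_eig} could directly use $\lambda_{\min}^{\mathcal{K}}(H(\xx^{(k)}))$, here $H$ is only controlled at $\xx^{*}$, so I would use Weyl's inequality together with~\eqref{H_Lip} to write $\lambda_{\min}^{\mathcal{K}}(H(\xx^{(k)})) \geq \lambda_{\min}^{\mathcal{K}}(H(\xx^{*})) - \|H(\xx^{(k)}) - H(\xx^{*})\|_{\mathcal{K}} \geq \lambda_{\min}^{\mathcal{K}}(H(\xx^{*})) - \Gamma\|\Delta_{k}\|$. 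The initial condition~\eqref{initial_cond_gen} then forces $\Gamma\|\Delta_{k}\| \leq \tfrac12\lambda_{\min}^{\mathcal{K}}(H(\xx^{*}))$, so that $\lambda_{\min}^{\mathcal{K}}(H(\xx^{(k)})) \geq \tfrac12\lambda_{\min}^{\mathcal{K}}(H(\xx^{*}))$, which is strictly positive by~\eqref{H_strict_convex_opt}. Combining $\tfrac12\lambda_{\min}^{\mathcal{K}}(H(\xx^{*}))\|\Delta_{k+1}\|^{2} \leq \Delta_{k+1}^{T}H(\xx^{(k)})\Delta_{k+1}$ with the upper bound and dividing through by $\tfrac12\lambda_{\min}^{\mathcal{K}}(H(\xx^{*}))\|\Delta_{k+1}\|$ (the case $\Delta_{k+1}=0$ being trivial) yields exactly the claimed recursion, with $\rho_{0} = 2\|H(\xx^{*}) - \alpha_{k}\nabla^{2}F(\xx^{*})\|_{\mathcal{K}}/\lambda_{\min}^{\mathcal{K}}(H(\xx^{*}))$ and $\xi = (2\Gamma + \alpha_{k}L)/\lambda_{\min}^{\mathcal{K}}(H(\xx^{*}))$. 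I would finish by checking that the factor $2$ appearing in both constants is traceable solely to this eigenvalue-halving step, which is indeed the only place where the local-versus-global distinction enters quantitatively.
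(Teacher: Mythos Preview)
Your proposal is correct and follows essentially the same approach as the paper's proof: both anchor the decomposition at $\xx^{*}$ (splitting into $H(\xx^{(k)})-H(\xx^{*})$, $H(\xx^{*})-\alpha_{k}\nabla^{2}F(\xx^{*})$, and the integral remainder), and both obtain the lower bound $\Delta_{k+1}^{T}H(\xx^{(k)})\Delta_{k+1}\geq\big(\lambda_{\min}^{\mathcal{K}}(H(\xx^{*}))-\Gamma\|\Delta_{k}\|\big)\|\Delta_{k+1}\|^{2}$ and then halve via~\eqref{initial_cond_gen}. The only cosmetic difference is that you invoke Weyl's inequality for the lower bound whereas the paper writes out the quadratic-form decomposition $\Delta_{k+1}^{T}H(\xx^{(k)})\Delta_{k+1}=\Delta_{k+1}^{T}H(\xx^{*})\Delta_{k+1}+\Delta_{k+1}^{T}\big(H(\xx^{(k)})-H(\xx^{*})\big)\Delta_{k+1}$ directly.
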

\begin{proof}
As in Lemma~\ref{structural_lemma_eig}, let $\Delta_{k} = \|\xx^{(k)} - \xx^{*}\|$. We get,
\begin{eqnarray*}
\Delta_{k+1}^{T} H(\xx^{(k)}) \Delta_{k+1} &\leq& \Delta_{k+1}^{T} H(\xx^{(k)}) \Delta_{k} - \alpha_{k}\Delta_{k+1}^{T} \left(\int_{0}^{1} \nabla^{2} F\big(\xx^{*} + t(\xx^{(k)} - \xx^{*})\big) d t \right) \Delta_{k} \\
&=& \Delta_{k+1}^{T} \Big( H(\xx^{(k)}) - H(\xx^{*}) \Big) \Delta_{k} + \Delta_{k+1}^{T} \Big( H(\xx^{*}) - \alpha_{k} \nabla^{2}F(\xx^{*}) \Big) \Delta_{k} \\
&& +\alpha_{k}\Delta_{k+1}^{T} \Big( \int_{0}^{1} \nabla^{2}F(\xx^{*}) - \nabla^{2} F\big(\xx^{*} + t(\xx^{(k)} - \xx^{*})\big) d t \Big)\Delta_{k} \\
&\leq& \| H(\xx^{(k)}) - H(\xx^{*})\|_{\mathcal{K}} \|\Delta_{k}\| \|\Delta_{k+1}\| \\
&& + \left\| H(\xx^{*})  - \alpha_{k} \nabla^{2}F(\xx^{*})\right\|_{\mathcal{K}} \|\Delta_{k}\| \|\Delta_{k+1}\| \\
&& +\alpha_{k} \int_{0}^{1} \| \nabla^{2}F(\xx^{*}) - \nabla^{2} F\big(\xx^{*} + t(\xx^{(k)} - \xx^{*}) \|_{\mathcal{K}} d t \|\Delta_{k}\| \|\Delta_{k+1}\| \\
&\leq& \left\| H(\xx^{*})  - \alpha_{k} \nabla^{2}F(\xx^{*})\right\|_{\mathcal{K}} \|\Delta_{k}\| \|\Delta_{k+1}\|\\
&& + \left(\Gamma +\alpha_{k} L/2\right) \|\Delta_{k}\|^{2} \|\Delta_{k+1}\|.
\end{eqnarray*}

We also have
\begin{eqnarray*}
\Delta_{k+1}^{T} H(\xx^{(k)}) \Delta_{k+1} &=& \Delta_{k+1}^{T} H(\xx^{*}) \Delta_{k+1} + \Delta_{k+1}^{T} \Big(H(\xx^{(k)}) - H(\xx^{*}) \Big)\Delta_{k+1} \\
&\geq& \lambda_{\min}^{\mathcal{K}} \left(H(\xx^{*})\right) \|\Delta_{k+1}\|^{2} + \Delta_{k+1}^{T} \Big(H(\xx^{(k)}) - H(\xx^{*}) \Big)\Delta_{k+1} \\
&\geq& \lambda_{\min}^{\mathcal{K}} \left(H(\xx^{*})\right) \|\Delta_{k+1}\|^{2}  - \Gamma \|\Delta_{k}\| \|\Delta_{k+1}\|^{2} \\
&=& \Big(\lambda_{\min}^{\mathcal{K}} \left(H(\xx^{*})\right)  - \Gamma \|\Delta_{k}\|\Big) \|\Delta_{k+1}\|^{2} 
\end{eqnarray*}
By Assumption~\eqref{H_strict_convex_opt}, we have $\lambda_{\min}^{\mathcal{K}} \left(H(\xx^{*})\right) > 0$. Hence, by setting $\|\Delta_{k}\| \leq \lambda_{\min}^{\mathcal{K}} \left(H(\xx^{*})\right) /(2\Gamma)$, we get 
\begin{equation*}
\Big(\lambda_{\min}^{\mathcal{K}} \left(H(\xx^{*})\right) - \Gamma \|\Delta_{k}\|\Big) \geq \frac{\lambda_{\min}^{\mathcal{K}} \left(H(\xx^{*})\right)}{2},
\end{equation*}
and so
\begin{eqnarray*}
\|\Delta_{k+1}\| &\leq& \frac{\left\| H(\xx^{*})  - \alpha_{k} \nabla^{2}F(\xx^{*})\right\|_{\mathcal{K}}}{\Big(\lambda_{\min}^{\mathcal{K}} \left(H(\xx^{*})\right) - \Gamma \|\Delta_{k}\|\Big)}  \|\Delta_{k}\| + \frac{ (\Gamma + \alpha_{k} L/2)}{\Big(\lambda_{\min}^{\mathcal{K}} \left(H(\xx^{*})\right) - \Gamma \|\Delta_{k}\|\Big)}  \|\Delta_{k}\|^{2} \\
&\leq& \frac{2 \left\| H(\xx^{*})  - \alpha_{k} \nabla^{2}F(\xx^{*})\right\|_{\mathcal{K}}}{\lambda_{\min}^{\mathcal{K}} \left(H(\xx^{*})\right)}  \|\Delta_{k}\| + \frac{2\Gamma + \alpha_{k} L}{\lambda_{\min}^{\mathcal{K}} \left(H(\xx^{*})\right)}  \|\Delta_{k}\|^{2}.
\end{eqnarray*}
\qed
\end{proof}

\subsubsection{Main proofs}
\label{sec:main_proofs_hess}
Before delving in to the proofs, we first make a note regarding the step-size chosen for all of our algorithms. Let us denote
\begin{equation*}
A \defeq H(\xx^{(k)}) - \alpha_{k} \nabla^{2}F(\xx^{(k)}),
\end{equation*}
and note that
\begin{equation*}
\left\| A \right\|_{\mathcal{K}} = \max \left\{ \lambda_{\max}^{\mathcal{K}} \left( A \right), \lambda_{\max}^{\mathcal{K}} \left( -A \right) \right\}.
\end{equation*}
By Weyl's inequality~\cite[Theorem 8.4.11]{bernstein2009matrix}, we have
\begin{eqnarray*}
\lambda_{\max}^{\mathcal{K}} \left( A \right) &\leq& \lambda_{\max}^{\mathcal{K}} \left(H(\xx^{(k)})\right) - \alpha_{k} \lambda_{\min}^{\mathcal{K}} \left(\nabla^{2}F(\xx^{(k)})\right)  \\
\lambda_{\max}^{\mathcal{K}} \left( -A \right) &\leq& \alpha_{k} \lambda_{\max}^{\mathcal{K}} \left(\nabla^{2}F(\xx^{(k)})\right)  - \lambda_{\min}^{\mathcal{K}} \left(H(\xx^{(k)})\right).
\end{eqnarray*}
It is easy to see that setting
\begin{equation*}
\alpha_{k} = \frac{\lambda_{\max}^{\mathcal{K}} \left(H(\xx^{(k)})\right) + \lambda_{\min}^{\mathcal{K}} \left(H(\xx^{(k)})\right)}{\lambda_{\max}^{\mathcal{K}} \left(\nabla^{2}F(\xx^{(k)})\right) + \lambda_{\min}^{\mathcal{K}} \left(\nabla^{2}F(\xx^{(k)})\right)},
\end{equation*}
minimizes the maximum of both right hand sides. Now if the sampling is done as in Lemma~\ref{hoeffding_lemma_2}, then on the event that 
\begin{equation*}
\|H - \nabla^{2}F(\xx) \|_{\mathcal{K}} \leq \epsilon \gamma,
\end{equation*}
we must have
\begin{eqnarray*}
\lambda_{\min}^{\mathcal{K}} \left( H(\xx) \right) &\geq& \lambda_{\min}^{\mathcal{K}} \left(\nabla^{2}F(\xx^{(k)})\right) -\epsilon\gamma, \\
\lambda_{\max}^{\mathcal{K}} \left( H(\xx) \right) &\leq& \lambda_{\max}^{\mathcal{K}} \left(\nabla^{2}F(\xx^{(k)})\right) + \epsilon\gamma.
\end{eqnarray*}
Hence, the optimal step size for the linear term is $\alpha_{k} \approx 1$. As a result, for the following results, we always consider the ``natural'' Newton step size (i.e., $\alpha_{k} = 1$). 

In addition, using~\eqref{hoeffding_invertibility}, the requirement~\eqref{initial_cond_gen} holds if
\begin{equation}
\|\xx^{(k)} - \xx^{*}\| \leq \frac{(1-\epsilon)\gamma^{*}}{2L},
\label{initial_cond_spec}
\end{equation}
where we have $\Gamma = L$. Also, under Assumptions~\eqref{F_strong} and~\eqref{F_strong_opt}, $H(\xx^{(k)})$ and $H(\xx^{*})$ always satisfy~\eqref{H_strict_convex} and~\eqref{H_strict_convex_opt}, respectively.

\begin{proof}[Proof of Theorem~\ref{uniform_newton_convergence}]
Since $|\mathcal{S}|$ is set as described in Comment~\ref{sample_size_alg} and $\alpha_{k} = 1$, then~\eqref{linear_rate_factor} holds with probability $1-\delta$. Now the results follow immediately by applying Lemmas~\ref{structural_lemma_eig} and~\ref{structural_lemma_eig_relax} and noting that for our choice of $H$, we have $\Gamma = L$. \qed
\end{proof}

\begin{proof}[Proof of Theorem~\ref{uniform_newton_sufficient_cond}]
Using this particular choice of $\epsilon$, Theorem~\ref{uniform_newton_convergence}, for every $k$, yields
\begin{equation*}
\|\xx^{(k+1)} - \xx^{*}\| \leq \rho_{0} \|\xx^{(k)} - \xx^{*}\| + \xi \|\xx^{(k)} - \xx^{*}\|^{2}.
\end{equation*}
Now the result follows by requiring that $$\rho_{0} \|\xx^{(0)} - \xx^{*}\| + \xi \|\xx^{(0)} - \xx^{*}\|^{2} \leq \rho \|\xx^{(0)} - \xx^{*}\|.$$
Also note that for the latter part of Theorem~\ref{uniform_newton_sufficient_cond}, since $0 < \rho_{0} < \rho < 1$, by induction we have, for every $k$
\begin{equation*}
\|\xx^{(k)} - \xx^{*}\| \leq \frac{\rho - \rho_{0}}{\xi} \leq \frac{1}{\xi} = \frac{\gamma^{*} (1-\epsilon)}{3 L} \leq \frac{\gamma^{*} (1-\epsilon)}{2 L},
\end{equation*}
so the condition~\eqref{initial_cond_spec} is satisfied. 

Let $A_{k}$ denote the event that $\|\xx^{(k)} - \xx^{*}\| \leq \rho \|\xx^{(k-1)} - \xx^{*}\|$. The overall success probability is
\begin{equation*}
\Pr\Big(\bigcap_{k=1}^{k_{0}} A_{k}\Big) = \Pr\Big(A_{k_{0}} \Big| \bigcap_{k=1}^{k_{0}-1} A_{k} \Big) \Pr\Big(\bigcap_{k=1}^{k_{0}-1} A_{k}\Big) = \cdots = \prod_{k=1}^{k_{0}} \Pr\Big(A_{k} \Big| \bigcap_{i=1}^{k-1} A_{i} \Big) = (1-\delta)^{k_{0}},
\end{equation*}
since for every $k$, the conditional probability of a successful update $\xx^{(k+1)}$, given the past successful iterations $\{\xx_{i}\}_{i=1}^{k}$, is $1-\delta$.
\qed
\end{proof}

\begin{proof}[Proof of Theorem~\ref{uniform_newton_sufficient_cond_2}]
Theorem~\ref{uniform_newton_convergence}, for each $k$, gives
\begin{equation*}
\|\xx^{(k+1)} - \xx^{*}\| \leq \rho_{0}^{(k)} \|\xx^{(k)} - \xx^{*}\| + \xi^{(k)} \|\xx^{(k)} - \xx^{*}\|^{2},
\end{equation*}
where depending on the assumptions of the theorem, $\rho_{0}^{(k)}$ and $\xi^{(k)}$ are as in~\eqref{rho_xi} or~\eqref{rho_xi_opt} using $\epsilon^{(k)}$. Note that, by $\epsilon^{(k)} = \rho^{k} \epsilon$ and $\epsilon$ set as in Theorem~\ref{uniform_newton_sufficient_cond}, it follows that 
\begin{eqnarray*}
\rho_{0}^{(0)} &\leq& \rho_{0}, \\ 
\rho_{0}^{(k)} &\leq& \rho^{k} \rho_{0}, \\
\xi^{(k)} &\leq& \xi^{(k-1)}.
\end{eqnarray*}
We prove the result by induction on $k$. Define $\Delta_{k} \defeq \xx^{(k)} - \xx^{*}$. For $k=0$, by assumptions on $\rho$, $\rho_{0}$, and $\xi^{(0)}$, we have 
\begin{equation*}
\|\Delta_{1}\| \leq \rho_{0}^{(0)} \|\Delta_{0}\| + \xi^{(0)} \|\Delta_{0}\|^{2} \leq \rho_{0} \|\Delta_{0}\| + \xi^{(0)} \|\Delta_{0}\|^{2} \leq \rho \|\Delta_{0}\|. 
\end{equation*}
Now assume that~\eqref{sup_lin} holds up to the iteration $k$. For $k+1$, we get
\begin{eqnarray*}
\|\Delta_{k+1}\| &\leq& \rho_{0}^{(k)} \|\Delta_{k}\| + \xi^{(k)} \|\Delta_{k}\|^{2}\\
&\leq& \rho^{k} \rho_{0} \|\Delta_{k}\| + \xi^{(k)} \|\Delta_{k}\|^{2} \\
&\leq& \rho^{k} \rho_{0} \|\Delta_{k}\| + \xi^{(0)} \|\Delta_{k}\|^{2}.
\end{eqnarray*}
By induction hypothesis, we have $\|\Delta_{k-1}\| \leq \|\Delta_{0}\|$, and 
\begin{equation*}
\|\Delta_{k}\| \leq \rho^{k} \|\Delta_{k-1}\| < \frac{ \rho^{k} (\rho-\rho_{0})}{\xi^{(0)}},
\end{equation*}
hence,
\begin{equation*}
\|\Delta_{k+1}\| \leq \rho^{k+1} \|\Delta_{k}\|.
\end{equation*}

Under the local regularity Assumptions~\eqref{strong_convex_boundedness_opt}, we can see by induction that for every $k$ 
\begin{equation*}
\|\xx^{(k)} - \xx^{*}\| < \|\xx^{(0)} - \xx^{*}\| \leq \frac{\rho-\rho_{0}^{(0)}}{\xi^{(0)}} \leq \frac{\gamma^{*} \left(1-\epsilon^{(0)}\right)}{3 L} \leq \frac{\gamma^{*} \left(1-\epsilon^{(k)}\right)}{2 L},
\end{equation*}
so the condition~\eqref{initial_cond_spec} is satisfied. The overall success probability is computed as in the end of the proof of Theorem~\ref{uniform_newton_sufficient_cond}.\qed
\end{proof}

\begin{proof}[Proof of Theorem~\ref{uniform_newton_sufficient_cond_3}]
We only give the proof for the first part, as the proof for the second part is almost identical. By the choice of $\epsilon^{(k)}$ in~\eqref{rho_xi}, we have
\begin{equation*}
\rho_{0}^{(k)} =  \frac{1}{2 \ln(4+k)},
\end{equation*}
and as before
\begin{eqnarray*}
\rho_{0}^{(k)} &<& \rho_{0}^{(k-1)}, \\
\xi^{(k)} &\leq& \xi^{(k-1)}.
\end{eqnarray*}
We again prove the result by induction on $k$. For $k=0$, by assumptions on $\xi^{(0)}$, we have 
\begin{equation*}
\|\Delta_{1}\| \leq \rho_{0}^{(0)} \|\Delta_{0}\| + \xi^{(0)} \|\Delta_{0}\|^{2} = \frac{1}{2 \ln(4)} \|\Delta_{0}\| + \xi^{(0)} \|\Delta_{0}\|^{2} \leq \frac{1}{\ln(4)} \|\Delta_{0}\|. 
\end{equation*}
Now assume that~\eqref{sup_lin_slow} holds up to the iteration $k$. For $k+1$, we get
\begin{eqnarray*}
\|\Delta_{k+1}\| &\leq& \rho_{0}^{(k)} \|\Delta_{k}\| + \xi^{(k)} \|\Delta_{k}\|^{2}\\
&\leq& \frac{1}{2 \ln(4+k)} \|\Delta_{k}\| + \xi^{(0)} \|\Delta_{k}\|^{2}.
\end{eqnarray*}
Now consider 
\begin{equation*}
\phi(x) = \frac{\ln(4+x)}{\ln(3+x)}.
\end{equation*}
Since $\phi(0) < 2\ln(2)$ and 
\begin{equation*}
\frac{d \phi(x)}{dx} = \frac{(3+x)\ln(3+x)- (4+x)\ln(4+x)}{(4+x)(3+x) \ln^2(3+x)} < 0,\quad \forall x \geq 0,
\end{equation*}
i.e., $\phi(x)$ is decreasing, it follows that, we have
\begin{equation*}
\ln(4+k) \leq 2\ln(2) \ln(3+k), \quad \forall k \geq 0.
\end{equation*}
Since $\ln(3+k) \geq 1$, we get
\begin{equation*}
\left(1 + 2\ln(4+k)\right) \leq \ln(3+k) \left(1 + 4\ln(2)\right).
\end{equation*}
Now, by induction hypothesis, we have $\|\Delta_{k-1}\| \leq \|\Delta_{0}\|$, and 
\begin{equation*}
\|\Delta_{k}\| \leq \frac{1}{\ln(3+k)} \|\Delta_{k-1}\| < \frac{2 \gamma }{\ln(3+k) \left(1 + 2\ln(4)\right) L},
\end{equation*}
which using the above implies that
\begin{equation*}
\|\Delta_{k}\| \leq \frac{2 \gamma }{\left(1 + 2\ln(4+k)\right) L} = \frac{1}{2\ln(4+k) \xi^{(0)}}.
\end{equation*}
As a result, we get
\begin{equation*}
\|\Delta_{k+1}\| \leq \frac{1}{\ln(4+k)} \|\Delta_{k}\|.
\end{equation*}
\qed
\end{proof}

\begin{proof}[Proof of Theorem~\ref{spectral_hessian_convergence}]
Now suppose $|\mathcal{S}|$ is chosen as described in Comment~\ref{sample_size_alg}. By Lemma~\ref{hoeffding_lemma_2}, it follows that, with probability $1-\delta$, we have
\begin{equation*}
\lambda_{\min}^{\mathcal{K}} \left( H(\xx) \right) \geq (1-\epsilon) \gamma.
\end{equation*}
As a result, by construction, we get  $$\|\hat{H}(\xx) - H(\xx) \| = \max\left\{0,\lambda - \lambda_{\min}(H(\xx))\right\} \leq  \max\left\{0,\lambda - (1-\epsilon) \gamma\right\}.$$ Now, the result follows by writing
\begin{equation*}
\hat{H}(\xx^{(k)}) = \big(\hat{H}(\xx^{(k)}) - H(\xx^{(k)}) \big)  + H(\xx^{(k)}),
\end{equation*}
and using the same line of reasoning as in the proof of Lemma~\ref{structural_lemma_eig}.
\qed
\end{proof}
\begin{proof}[Proof of Theorem~\ref{spectral_hessian_convergence_relaxed}]
Proof goes along the same line as that of Lemma~\ref{structural_lemma_eig_relax}. In particular, using Assumption~\eqref{F_Lip_spectral}, we have the following chain of inequalities
\begin{eqnarray*}
\|\hat{H}(\xx^{(k)}) - \hat{H}(\xx^{*})\|_{2} &\leq& \|\hat{H}(\xx^{(k)}) - \hat{H}(\xx^{*})\|_{F} \leq \|H(\xx^{(k)}) - H(\xx^{*})\|_{F} \\
&\leq& \sqrt{p} \; \|H(\xx^{(k)}) - H(\xx^{*})\|_{2} \leq \sqrt{p} \; L \|\xx^{(k)} - \xx^{*}\|,
\end{eqnarray*}
where the second inequality follows by~\cite[Lemma VII.5.5]{bhatia2013matrix} and noting that the scalar version of the projection operator~\eqref{spectral_proj}, i.e., $\max(x,\lambda)$ is 1-Lipschitz. Specifically,
\begin{eqnarray*}
|\max(x,\lambda) - \max(y,\lambda)| &=& \hf \big|\lambda + x + |\lambda-x| - \lambda - y - |\lambda-y|\big| \\
&=& \hf \big|x - y + |\lambda-x| - |\lambda-y|\big| \\
&\leq& \hf|x - y| + \hf\big||\lambda-x| - |\lambda-y|\big| \\
&\leq& \hf|x - y| + \hf|x-y| \\
&\leq& |x-y|.
\end{eqnarray*} 
Now the result follows by writing
\begin{equation*}
\hat{H}(\xx^{(k)}) = \big(\hat{H}(\xx^{(k)}) - \hat{H}(\xx^{*}) \big) + \big(\hat{H}(\xx^{*}) - H(\xx^{*})\big) + H(\xx^{*}).
\end{equation*}
\qed
\end{proof}

\begin{proof}[Proof of Theorem~\ref{spectral_hessian_suff}]
By setting $\lambda^{(k)}$ as in~\eqref{spectral_threshold} and using Lemma~\ref{hoeffding_lemma_2}, we in fact have $\lambda^{(k)} \geq (1-\epsilon) \gamma$ and a requirement of Theorem~\ref{spectral_hessian_convergence} holds with probability $1-\delta$. Now we solve for $\epsilon^{(k)}$, so that for some $\rho_{0}^{(k)} > 0$, which is to be determined later, we have
\begin{equation*}
\rho_{0}^{(k)} \geq \frac{\lambda^{(k)} - (1-\epsilon^{(k)}) \gamma +  \gamma\epsilon^{(k)}}{\lambda^{(k)}}.
\end{equation*}
In other words, we need to choose 
\begin{equation*}
\epsilon^{(k)} \leq \frac{\gamma + (\rho_{0}^{(k)} - 1) \lambda^{(k)}} {2 \gamma}.
\end{equation*}
For this choice of $\epsilon^{(k)}$, using Theorem~\ref{spectral_hessian_convergence}, we get~\eqref{convergence_subsampled_hessian} with $\rho_{0}^{(k)}$ and $\xi^{(k)} = L/(2\lambda^{(k)})$. 
Now setting
\begin{eqnarray*}
\rho_{0}^{(k)} &=& 1 - \frac{2 \gamma}{3 \lambda^{(k)}} \\
\rho^{(k)} &=& 1 - \frac{\gamma}{2 \lambda^{(k)}},
\end{eqnarray*}
and noting that
\begin{equation*}
\frac{\rho^{(k)} - \rho_{0}^{(k)}}{\xi^{(k)}} = \frac{\gamma}{3 L},
\end{equation*}
it can be easily shown, by induction, that under Assumption~\eqref{initial_cond_spectral} and the choice of $\epsilon$ in~\eqref{epsilon_spectral}, we have
\begin{equation*}
\|\xx^{(k+1)} - \xx^{*}\| \leq \rho^{(k)} \|\xx^{(k)} - \xx^{*}\|
\end{equation*}
for every $k$. Furthermore, since two sampling steps in Algorithm~\ref{alg_spectral} are independent, the success probability for each iteration is $(1-\delta)^{2}$. Now the overall success probability can computed as in the end of the proof of Theorem~\ref{uniform_newton_sufficient_cond}.
\qed
\end{proof}

\begin{proof}[Proof of Theorem~\ref{spectral_hessian_suff_relaxed}]
Suppose at iteration $k$,~\eqref{initial_cond_spectral_relax} holds. By Weyl's inequality and Assumption~\eqref{F_Lip_spectral}, we have
\begin{equation*}
\left|\lambda_{\min}\left(\nabla^{2}F(\xx^{(k)})\right) - \lambda_{\min}\left(\nabla^{2}F(\xx^{*})\right)\right| \leq \| \nabla^{2}F(\xx^{(k)}) - \nabla^{2}F(\xx^{*}) \| \leq L \| \xx^{(k)} - \xx^{*} \|.
\end{equation*}
So by~\eqref{initial_cond_spectral_relax}, it follows that
\begin{eqnarray*}
\lambda_{\min}\left(\nabla^{2}F(\xx^{(k)})\right) &\geq& \lambda_{\min}\left(\nabla^{2}F(\xx^{*})\right) - L \| \xx^{(k)} - \xx^{*} \| \\
&\geq& \gamma^{*} - L \| \xx^{(k)} - \xx^{*} \|\\
&\geq& \frac{(6\sqrt{p} + 2) \gamma^{*}}{6 \sqrt{p} + 3}.
\end{eqnarray*}
Hence, by setting $\lambda^{(k)}$ as in~\eqref{spectral_threshold_relaxed}, we in fact have $\lambda^{(k)} \geq (1-\epsilon) \gamma^{*}$ and a requirement of Theorem~\ref{spectral_hessian_convergence_relaxed} holds with probability $1-\delta$. The rest of the proof follows the same reasoning as in the proof of Theorem~\ref{spectral_hessian_suff} by using the results of Theorem~\ref{spectral_hessian_convergence_relaxed}. \qed
\end{proof}

\begin{proof}[Proof of Theorem~\ref{ridge_hessian_convergence}]
The first part is straightforward by using Lemma~\ref{structural_lemma_eig}. The proof of the second part follows the same reasoning as in Lemma~\ref{structural_lemma_eig_relax} by noting
\begin{equation*}
\Delta_{k+1}^{T} \hat{H}(\xx^{(k)}) \Delta_{k+1} \geq \Big(\lambda + (1-\epsilon)\gamma^{*} - L \|\Delta_{k}\|\Big) \|\Delta_{k+1}\|^{2}.
\end{equation*}
Now~\eqref{initial_cond_spec} implies
\begin{equation*}
\lambda + (1-\epsilon)\gamma^{*} - L \|\Delta_{k}\| \geq  \lambda + \hf (1-\epsilon)\gamma^{*},
\end{equation*}
and the result follows. \qed
\end{proof}

\begin{proof}[Proof of Theorem~\ref{ridge_hessian_suff}]
Proof follows the same reasoning as before using Theorem~\ref{ridge_hessian_convergence}, so we omit the details. Note that, for the latter part of Theorem~\ref{ridge_hessian_suff}, since $1-\gamma^{*}/(\gamma^{*} + 2\lambda) < \rho_{0} < \rho < 1$, we have
\begin{equation*}
\frac{\rho - \rho_{0}}{\xi} \leq \frac{\gamma^{*}}{(\gamma^{*} + 2\lambda)\xi} = \frac{\gamma^{*}}{(\gamma^{*} + 2\lambda)} \frac{(1-\epsilon) \gamma^{*} + 2\lambda}{3L} \leq  \frac{\gamma^{*}}{3 L} \leq \frac{(1-\epsilon)\gamma^{*}}{2 L},
\end{equation*}
so the condition~\eqref{initial_cond_spec} is satisfied. The last inequality follows since, by the choice of $\epsilon$ and $\rho_{0} < 1$, we have
\begin{equation*}
\epsilon \leq \frac{\rho_{0} \gamma^{*} + 2(\rho_{0}  - 1) \lambda }{(2 + \rho_{0}) \gamma^{*}} \leq \frac{\rho_{0} }{2 + \rho_{0}} < \frac{1}{3}.
\end{equation*}
\qed
\end{proof}

\subsection{Proofs of the theorems and lemma of Section~\ref{sec:subsampl_newton_hess_grad}}
Here we give the proofs of all of the results in Section~\ref{sec:subsampl_newton_hess_grad}.

\begin{proof}[Proof of Lemma~\ref{randnla_lemma}]
Same as in the proof of Lemma~\ref{hoeffding_lemma}, let $U$ be an orthonormal basis for $\mathcal{K}$. Hence, by the assumption and the definition~\eqref{norm_cone_vec}, we have  that at a given $\xx \in \mathcal{D} \cap \mathcal{X}$
\begin{equation*}
\|U^{T} \nabla f_{i}(\xx) \| \leq G(\xx), \quad i =1,2,\ldots,n.
\end{equation*}
As mentioned before, the gradient $\nabla F(\xx)$ can be equivalently written as a product of two matrices as $\nabla F(\xx) = A B$, where
\begin{eqnarray*}
A &\defeq& \begin{pmatrix}
\mid & \mid & & \mid \\
\nabla f_{1}(\xx) & \nabla f_{2}(\xx) & \cdots & \nabla f_{n}(\xx)\\
\mid & \mid & & \mid \\
\end{pmatrix} \in \mathbb{R}^{p \times n}, \\
B &\defeq& \left(1/n,1/n,\ldots,1/n\right)^{T} \in \mathbb{R}^{n \times 1}.
\end{eqnarray*}
As a result, approximating the gradient using sub-sampling is equivalent to approximating the product $AB$ by sampling columns and rows of A and B, respectively, and forming matrices $\widehat{A}$ and $\widehat{B}$ such $\widehat{A}\widehat{B} \approx AB$. More precisely, for a random sampling index set $\mathcal{S}$, we can represent the sub-sampled gradient~\eqref{subsampled_G} by the product $\widehat{A} \widehat{B}$ where $\widehat{A} \in \mathbb{R}^{p \times |\mathcal{S}|}$ and $\widehat{B} \in \mathbb{R}^{|\mathcal{S}| \times 1}$ are formed by selecting uniformly at random and with replacement, $|\mathcal{S}|$ columns and rows of $A$ and $B$, respectively, rescaled by $\sqrt{n/|\mathcal{S}|}$. Now we can use~\cite[Lemma 11]{drineas2006fast} to get
\begin{equation*}
\|U^{T}AB - U^{T}\widehat{A}\widehat{B} \|_{F} = \|\nabla F(\xx) - \bgg(\xx) \|_{\mathcal{K}} \leq \frac{G(\xx)}{\sqrt{|\mathcal{S}|}} \Big( 1 + \sqrt{8 \ln \frac{1}{\delta}}\Big),
\end{equation*}
with probability $1-\delta$. Now the result follows by requiring that 
\begin{equation*}
\frac{G(\xx)}{\sqrt{|\mathcal{S}|}} \Big( 1 + \sqrt{8 \ln \frac{1}{\delta}}\Big) \leq \epsilon.
\end{equation*}
\qed
\end{proof}

\subsubsection{Structural Lemmas}
\label{sec:strutural_lemmas_hess_grad}
As before, we first give some structural lemmas which form the foundation of our main results for this Section.  
\begin{lemma}[Structural Lemma 3]
\label{structural_lemma_grad}
Let Assumptions~\eqref{F_Lip},~\eqref{strong_convex_boundedness} and~\eqref{H_strict_convex} hold. For the update~\eqref{structural_update_grad}, we have
\begin{equation*}
\|\xx^{(k+1)} - \xx^{*}\| \leq \eta + \rho_{0}\|\xx^{(k)} - \xx^{*}\| + \xi  \|\xx^{(k)} - \xx^{*}\|^{2},
\end{equation*}
where
\begin{align*}
&\eta \defeq \alpha_{k} \frac{\|\nabla F(\xx^{(k)}) - \bgg(\xx^{(k)})\|_{\mathcal{K}}}{\lambda_{\min}^{\mathcal{K}} \left( H(\xx^{(k)}) \right)},\\
&\rho_{0} \defeq \frac{\left\| H(\xx^{(k)}) - \alpha_{k} \nabla^{2}F(\xx^{(k)}) \right\|_{\mathcal{K}} }{\lambda_{\min}^{\mathcal{K}} \left( H(\xx^{(k)}) \right)}, \\
&\xi \defeq \frac{\alpha_{k} L}{2 \lambda_{\min}^{\mathcal{K}} \left( H(\xx^{(k)}) \right)}.
\end{align*}
where $\|\vv\|_{\mathcal{K}}$ is as in~\eqref{norm_cone_vec}.
\end{lemma}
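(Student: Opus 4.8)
The plan is to mirror the proof of Structural Lemma~\ref{structural_lemma_eig} almost verbatim, introducing the gradient approximation error as an additive perturbation that produces the extra $\eta$ term. First I would write the variational (optimality) inequality for $\xx^{(k+1)}$ arising from the subproblem~\eqref{structural_update_grad}: for every $\xx \in \mathcal{D}\cap\mathcal{X}$,
$$(\xx - \xx^{(k+1)})^{T}\bgg(\xx^{(k)}) + \frac{1}{\alpha_{k}}(\xx - \xx^{(k+1)})^{T} H(\xx^{(k)})(\xx^{(k+1)} - \xx^{(k)}) \geq 0,$$
and specialize to $\xx = \xx^{*}$, writing $\Delta_{k} \defeq \xx^{(k)} - \xx^{*}$ so that $\xx^{(k+1)} - \xx^{(k)} = \Delta_{k+1} - \Delta_{k}$. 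This yields
$$\Delta_{k+1}^{T} H(\xx^{(k)}) \Delta_{k+1} \leq \Delta_{k+1}^{T} H(\xx^{(k)}) \Delta_{k} - \alpha_{k}\Delta_{k+1}^{T} \bgg(\xx^{(k)}).$$

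The key step is the decomposition $-\alpha_{k}\Delta_{k+1}^{T}\bgg(\xx^{(k)}) = -\alpha_{k}\Delta_{k+1}^{T}\nabla F(\xx^{(k)}) + \alpha_{k}\Delta_{k+1}^{T}\big(\nabla F(\xx^{(k)}) - \bgg(\xx^{(k)})\big)$. The first piece is exactly the quantity already handled in the proof of Lemma~\ref{structural_lemma_eig}: invoking optimality of $\xx^{*}$ (so that $\nabla F(\xx^{*})^{T}(\xx^{(k+1)} - \xx^{*}) \geq 0$), the integral representation of $\nabla F(\xx^{(k)}) - \nabla F(\xx^{*})$, Assumption~\eqref{F_Lip}, and definition~\eqref{norm_cone_mat}, it reproduces the bound
$$\left\| H(\xx^{(k)}) - \alpha_{k}\nabla^{2}F(\xx^{(k)})\right\|_{\mathcal{K}}\|\Delta_{k}\|\|\Delta_{k+1}\| + \alpha_{k}\tfrac{L}{2}\|\Delta_{k}\|^{2}\|\Delta_{k+1}\|.$$

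Next I would control the new perturbation term. The crucial observation is that $\Delta_{k+1} \in \mathcal{K}$: since $\xx^{(k+1)} = \xx^{*} + \Delta_{k+1}$ lies in $\mathcal{D}\cap\mathcal{X}$, taking $t = 1$ in~\eqref{cone} shows $\Delta_{k+1}$ is a feasible direction. Consequently, by the definition~\eqref{norm_cone_vec} of the $\mathcal{K}$-restricted vector norm,
$$\alpha_{k}\Delta_{k+1}^{T}\big(\nabla F(\xx^{(k)}) - \bgg(\xx^{(k)})\big) \leq \alpha_{k}\left\|\nabla F(\xx^{(k)}) - \bgg(\xx^{(k)})\right\|_{\mathcal{K}}\,\|\Delta_{k+1}\|.$$
Combining the three bounds yields an upper estimate of $\Delta_{k+1}^{T}H(\xx^{(k)})\Delta_{k+1}$ in which every term on the right carries a single factor of $\|\Delta_{k+1}\|$.

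Finally I would use $\Delta_{k+1} \in \mathcal{K}$ once more to obtain the lower bound $\Delta_{k+1}^{T}H(\xx^{(k)})\Delta_{k+1} \geq \lambda_{\min}^{\mathcal{K}}\!\left(H(\xx^{(k)})\right)\|\Delta_{k+1}\|^{2}$, which is strictly positive by Assumption~\eqref{H_strict_convex}. Dividing through by $\lambda_{\min}^{\mathcal{K}}\!\left(H(\xx^{(k)})\right)\|\Delta_{k+1}\|$ cancels one power of $\|\Delta_{k+1}\|$ and delivers the claimed recursion with $\eta$, $\rho_{0}$, and $\xi$ exactly as stated. The argument is essentially routine once Lemma~\ref{structural_lemma_eig} is in hand; the only genuinely new point, and the step I would be most careful to justify, is the membership $\Delta_{k+1} \in \mathcal{K}$, since without it neither the restricted-norm bound on the gradient error nor the restricted-eigenvalue lower bound on the quadratic form would be legitimate.
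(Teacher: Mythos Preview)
Your proposal is correct and follows essentially the same approach as the paper: decompose $\bgg(\xx^{(k)}) = \big(\bgg(\xx^{(k)}) - \nabla F(\xx^{(k)})\big) + \nabla F(\xx^{(k)})$, reuse the argument of Lemma~\ref{structural_lemma_eig} for the $\nabla F$ part, and bound the additional term via $|\Delta_{k+1}^{T}(\nabla F(\xx^{(k)}) - \bgg(\xx^{(k)}))| \leq \|\nabla F(\xx^{(k)}) - \bgg(\xx^{(k)})\|_{\mathcal{K}}\,\|\Delta_{k+1}\|$. Your explicit verification that $\Delta_{k+1} \in \mathcal{K}$ is a useful addition, as the paper leaves this implicit both here and in Lemma~\ref{structural_lemma_eig}.
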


\begin{proof}
The result is obtained as in the proof of Lemma~\ref{structural_lemma_eig}, and using the identity $\bgg(\xx^{(k)}) = \bgg(\xx^{(k)}) - \nabla F(\xx^{(k)}) + \nabla F(\xx^{(k)})$, and noting that $|\Delta_{k+1}^{T} \left( \bgg(\xx^{(k)}) - \nabla F(\xx^{(k)}) \right)| \leq \|\bgg(\xx^{(k)}) - \nabla F(\xx^{(k)})\|_{\mathcal{K}} \|\Delta_{k+1}\|$.
\qed
\end{proof}

As in Section~\ref{sec:strutural_lemmas_hess}, it is possible to have similar results as in Lemma~\ref{structural_lemma_grad} by replacing~\eqref{strong_convex_boundedness} with its local counterpart~\eqref{strong_convex_boundedness_opt}. The proof is omitted as it is similar to the proof of previous lemmas.
\begin{lemma}[Structural Lemma 4]
\label{structural_lemma_grad_relaxed}
Let Assumptions~\eqref{F_Lip},~\eqref{strong_convex_boundedness_opt},~\eqref{H_strict_convex_opt}, and~\eqref{H_Lip} hold. For the update~\eqref{structural_update_grad}, if $\xx^{(k)}$ satisfies~\eqref{initial_cond_gen}, we have
\begin{equation*}
\|\xx^{(k+1)} - \xx^{*}\| \leq \eta + \rho_{0}  \|\xx^{(k)} - \xx^{*}\| +  \xi  \|\xx^{(k)} - \xx^{*}\|^{2},
\end{equation*}
where 
\begin{align*}
&\eta \defeq 2 \alpha_{k} \frac{\|\nabla F(\xx^{(k)}) - \bgg(\xx^{(k)})\|_{\mathcal{K}}}{\lambda_{\min}^{\mathcal{K}} \left( H(\xx^{*}) \right)},\\
&\rho_{0} \defeq \frac{2 \left\| H(\xx^{*})  - \alpha_{k} \nabla^{2}F(\xx^{*})\right\|_{\mathcal{K}}}{\lambda_{\min}^{\mathcal{K}} \left( H(\xx^{*}) \right)}, \\
&\xi \defeq \frac{2\Gamma + \alpha_{k} L }{\lambda_{\min}^{\mathcal{K}} \left( H(\xx^{*}) \right)}.
\end{align*}
\end{lemma}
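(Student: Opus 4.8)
The plan is to reproduce the argument of Lemma~\ref{structural_lemma_eig_relax} while inserting the sub-sampled gradient exactly as was done in passing from Lemma~\ref{structural_lemma_eig} to Lemma~\ref{structural_lemma_grad}. First I would set $\Delta_{k} \defeq \xx^{(k)} - \xx^{*}$ and invoke the optimality condition for $\xx^{(k+1)}$ in~\eqref{structural_update_grad}: for every $\xx \in \mathcal{D} \cap \mathcal{X}$,
\[
(\xx - \xx^{(k+1)})^{T}\bgg(\xx^{(k)}) + \frac{1}{\alpha_{k}}(\xx - \xx^{(k+1)})^{T} H(\xx^{(k)})(\xx^{(k+1)} - \xx^{(k)}) \geq 0.
\]
Taking $\xx = \xx^{*}$ and writing $\bgg(\xx^{(k)}) = \big(\bgg(\xx^{(k)}) - \nabla F(\xx^{(k)})\big) + \nabla F(\xx^{(k)})$, I would then add the nonnegative quantity $\alpha_{k}\Delta_{k+1}^{T}\nabla F(\xx^{*})$ (nonnegative by optimality of $\xx^{*}$). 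This isolates a gradient-error term $-\alpha_{k}\Delta_{k+1}^{T}\big(\bgg(\xx^{(k)}) - \nabla F(\xx^{(k)})\big)$, whose magnitude is at most $\alpha_{k}\|\nabla F(\xx^{(k)}) - \bgg(\xx^{(k)})\|_{\mathcal{K}}\|\Delta_{k+1}\|$, on top of exactly the expression appearing in the proof of Lemma~\ref{structural_lemma_eig_relax}.

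Next I would treat the remaining terms precisely as in Lemma~\ref{structural_lemma_eig_relax}: use the integral identity $\nabla F(\xx^{(k)}) - \nabla F(\xx^{*}) = \big(\int_{0}^{1}\nabla^{2}F(\xx^{*}+t\Delta_{k})\,dt\big)\Delta_{k}$ and split $H(\xx^{(k)})\Delta_{k} - \alpha_{k}(\nabla F(\xx^{(k)}) - \nabla F(\xx^{*}))$ into the three pieces $(H(\xx^{(k)}) - H(\xx^{*}))\Delta_{k}$, $(H(\xx^{*}) - \alpha_{k}\nabla^{2}F(\xx^{*}))\Delta_{k}$, and the integral remainder. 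Bounding these via~\eqref{H_Lip}, the $\mathcal{K}$-norm of $H(\xx^{*}) - \alpha_{k}\nabla^{2}F(\xx^{*})$, and~\eqref{F_Lip} respectively, yields the upper bound
\[
\Delta_{k+1}^{T}H(\xx^{(k)})\Delta_{k+1} \leq \Big[\|H(\xx^{*}) - \alpha_{k}\nabla^{2}F(\xx^{*})\|_{\mathcal{K}}\|\Delta_{k}\| + (\Gamma + \tfrac{\alpha_{k}L}{2})\|\Delta_{k}\|^{2} + \alpha_{k}\|\nabla F(\xx^{(k)}) - \bgg(\xx^{(k)})\|_{\mathcal{K}}\Big]\|\Delta_{k+1}\|.
\]

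For the lower bound I would again follow Lemma~\ref{structural_lemma_eig_relax}, writing $\Delta_{k+1}^{T}H(\xx^{(k)})\Delta_{k+1} \geq \big(\lambda_{\min}^{\mathcal{K}}(H(\xx^{*})) - \Gamma\|\Delta_{k}\|\big)\|\Delta_{k+1}\|^{2}$ via~\eqref{H_Lip} and~\eqref{H_strict_convex_opt}, and invoke the hypothesis~\eqref{initial_cond_gen} so that $\lambda_{\min}^{\mathcal{K}}(H(\xx^{*})) - \Gamma\|\Delta_{k}\| \geq \tfrac12\lambda_{\min}^{\mathcal{K}}(H(\xx^{*}))$. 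Cancelling one factor of $\|\Delta_{k+1}\|$ and dividing through by this halved lower bound produces exactly the stated $\eta$, $\rho_{0}$, and $\xi$. The only real bookkeeping subtlety---the one place where the two prior proofs must genuinely be merged rather than quoted---is ensuring the gradient-error term survives the same halved-denominator step as the linear and quadratic terms, which is precisely what converts $\alpha_{k}\|\nabla F - \bgg\|_{\mathcal{K}}/\lambda_{\min}^{\mathcal{K}}(H(\xx^{*}))$ into the factor-of-two constant in $\eta$; everything else is a verbatim transcription of the two cited arguments.
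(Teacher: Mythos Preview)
Your proposal is correct and is precisely the argument the paper has in mind: the paper omits the proof entirely, stating only that it ``is similar to the proof of previous lemmas,'' i.e., exactly the merge of Lemma~\ref{structural_lemma_eig_relax} and Lemma~\ref{structural_lemma_grad} that you carry out. Your bookkeeping on the factor of two (from dividing by $\tfrac12\lambda_{\min}^{\mathcal{K}}(H(\xx^{*}))$ after invoking~\eqref{initial_cond_gen}) matches the stated constants in $\eta$, $\rho_{0}$, and $\xi$.
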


It is even possible to relax Assumption~\eqref{boundedness_opt} and give a more general result where the gradient approximation error is measured only at a local optimum $\xx^{*}$.

\begin{lemma}[Structural Lemma 5]
\label{structural_lemma_grad_relaxed_2}
Let Assumptions~\eqref{F_strong_opt},~\eqref{H_strict_convex_opt} and~\eqref{H_Lip} hold and let 
\begin{equation*}
J_{\bgg}(\xx) \defeq \frac{d \bgg(\xx)}{d \xx},
\end{equation*}
denote the Jacobian of $\bgg$ at $\xx$. Also assume that for some $T \geq 0$,
\begin{equation*}
\| H(\xx) - J_{\bgg}(\yy) \|_{\mathcal{K}} \leq T \|\xx - \yy \|, \quad \xx, \yy \in \mathcal{D} \cap \mathcal{X}, \text{ s.t. } \xx-\yy \in \mathcal{K}.
\end{equation*}
For the update~\eqref{structural_update_grad} with $\alpha_{k} = 1$, if $\xx^{(k)}$ satisfies~\eqref{initial_cond_gen}, we have
\begin{equation*}
\|\xx^{(k+1)} - \xx^{*}\| \leq \eta + \xi \|\xx^{(k)} - \xx^{*}\|^{2},
\end{equation*}
where
\begin{align*}
\eta &\defeq \frac{2 \|\nabla F(\xx^{*}) - \bgg(\xx^{*})\|_{\mathcal{K}}}{\lambda_{\min}^{\mathcal{K}} \left( H(\xx^{*}) \right)}, \\
\xi &\defeq \frac{T}{\lambda_{\min}^{\mathcal{K}} \left( H(\xx^{*}) \right)}.
\end{align*}
\end{lemma}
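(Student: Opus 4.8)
The plan is to follow the template of Lemmas~\ref{structural_lemma_eig} and~\ref{structural_lemma_grad}, but to exploit the joint Lipschitz bound on $H - J_{\bgg}$ so that the usual linear error term is absorbed into a quadratic one. Writing $\Delta_{k} \defeq \xx^{(k)} - \xx^{*}$ and taking $\alpha_{k} = 1$, I would first invoke the variational inequality characterizing $\xx^{(k+1)}$ in~\eqref{structural_update_grad}: for every feasible $\xx$,
\[
(\xx - \xx^{(k+1)})^{T}\bgg(\xx^{(k)}) + (\xx - \xx^{(k+1)})^{T}H(\xx^{(k)})(\xx^{(k+1)} - \xx^{(k)}) \geq 0.
\]
Setting $\xx = \xx^{*}$ and using $\xx^{(k+1)} - \xx^{(k)} = \Delta_{k+1} - \Delta_{k}$ yields
\[
\Delta_{k+1}^{T} H(\xx^{(k)}) \Delta_{k+1} \leq \Delta_{k+1}^{T} H(\xx^{(k)}) \Delta_{k} - \Delta_{k+1}^{T}\bgg(\xx^{(k)}).
\]

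Next, I would import the first-order optimality of $\xx^{*}$ (available since Assumption~\eqref{F_strong_opt} makes $\xx^{*}$ an isolated local minimizer), namely $\nabla F(\xx^{*})^{T}\Delta_{k+1} \geq 0$, and add this nonnegative quantity to the right-hand side. The crucial move is then to rewrite $\bgg(\xx^{(k)})$ through the fundamental theorem of calculus for $\bgg$,
\[
\bgg(\xx^{(k)}) - \bgg(\xx^{*}) = \left(\int_{0}^{1} J_{\bgg}\big(\xx^{*} + t \Delta_{k}\big)\, dt\right)\Delta_{k},
\]
so that $-\Delta_{k+1}^{T}\bgg(\xx^{(k)}) + \Delta_{k+1}^{T}\nabla F(\xx^{*})$ splits into a gradient-mismatch piece $\Delta_{k+1}^{T}\big(\nabla F(\xx^{*}) - \bgg(\xx^{*})\big)$ and a Jacobian piece $-\Delta_{k+1}^{T}\big(\int_{0}^{1} J_{\bgg}(\xx^{*}+t\Delta_{k})\,dt\big)\Delta_{k}$. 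Combining the latter with the leftover $\Delta_{k+1}^{T}H(\xx^{(k)})\Delta_{k}$ produces $\Delta_{k+1}^{T}\big(\int_{0}^{1}[H(\xx^{(k)}) - J_{\bgg}(\xx^{*}+t\Delta_{k})]\,dt\big)\Delta_{k}$, and the new hypothesis bounds the integrand in $\|\cdot\|_{\mathcal{K}}$ by $T\|(1-t)\Delta_{k}\| = T(1-t)\|\Delta_{k}\|$ (legitimate because $\Delta_{k}\in\mathcal{K}$, $\xx^{(k)}$ being feasible, and $\mathcal{K}$ is closed under nonnegative scaling). Integrating $(1-t)$ over $[0,1]$ produces a factor $1/2$, so this whole contribution is at most $\tfrac{T}{2}\|\Delta_{k}\|^{2}\|\Delta_{k+1}\|$, while the gradient-mismatch piece is at most $\|\nabla F(\xx^{*}) - \bgg(\xx^{*})\|_{\mathcal{K}}\|\Delta_{k+1}\|$.

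For the left-hand side, I would reproduce the lower bound from Lemma~\ref{structural_lemma_eig_relax}: writing $H(\xx^{(k)}) = H(\xx^{*}) + (H(\xx^{(k)}) - H(\xx^{*}))$, Assumption~\eqref{H_strict_convex_opt} gives $\lambda_{\min}^{\mathcal{K}}(H(\xx^{*})) > 0$, and~\eqref{H_Lip} together with the region restriction~\eqref{initial_cond_gen} gives $\Delta_{k+1}^{T}H(\xx^{(k)})\Delta_{k+1} \geq \tfrac12\lambda_{\min}^{\mathcal{K}}(H(\xx^{*}))\|\Delta_{k+1}\|^{2}$. Dividing the resulting inequality by $\tfrac12\lambda_{\min}^{\mathcal{K}}(H(\xx^{*}))\|\Delta_{k+1}\|$ yields precisely the claimed recursion, with the factor $2$ appearing in both $\eta$ and $\xi$ coming from this $1/2$.

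The main obstacle, and the genuinely new ingredient relative to Lemmas~\ref{structural_lemma_grad} and~\ref{structural_lemma_grad_relaxed}, is recognizing that the separate linear term $\rho_{0}\|\Delta_{k}\|$ present there vanishes here: it is exactly the joint bound $\|H(\xx^{(k)}) - J_{\bgg}(\yy)\|_{\mathcal{K}} \leq T\|\xx^{(k)} - \yy\|$ that lets me merge the Hessian-action term and the sampled-gradient-difference term into a single $O(\|\Delta_{k}\|^{2})$ quantity, rather than bounding each separately. The only technical care required is to verify that every increment to which~\eqref{H_Lip} and the $H - J_{\bgg}$ bound are applied lies in the cone $\mathcal{K}$, which follows from $\Delta_{k}\in\mathcal{K}$ and scale-invariance of $\mathcal{K}$.
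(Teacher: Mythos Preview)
Your proposal is correct and follows essentially the same argument as the paper's proof: the variational inequality for $\xx^{(k+1)}$, the first-order optimality of $\xx^{*}$, the integral representation of $\bgg(\xx^{(k)}) - \bgg(\xx^{*})$ via $J_{\bgg}$, the merging of $H(\xx^{(k)})\Delta_{k}$ with the Jacobian integral to produce the $\tfrac{T}{2}\|\Delta_{k}\|^{2}$ term, and the lower bound on the left-hand side via~\eqref{H_Lip} and~\eqref{initial_cond_gen} exactly as in Lemma~\ref{structural_lemma_eig_relax}. Your identification of the key new ingredient---that the joint bound on $H - J_{\bgg}$ is what collapses the linear term into a quadratic one---is precisely the point.
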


\begin{proof}
As in Lemma~\ref{structural_lemma_eig}, let $\Delta_{k} = \|\xx^{(k)} - \xx^{*}\|$. We get,
\begin{eqnarray*}
\Delta_{k+1}^{T} H(\xx^{(k)}) \Delta_{k+1} &\leq& \Delta_{k+1}^{T} H(\xx^{(k)}) \Delta_{k} - \Delta_{k+1}^{T} \bgg(\xx^{(k)})\\
&\leq& \Delta_{k+1}^{T} H(\xx^{(k)}) \Delta_{k} + \Delta_{k+1}^{T} \Big( \bgg(\xx^{*}) - \bgg(\xx^{(k)})\Big) \\
&& + \Delta_{k+1}^{T} \Big( \nabla F(\xx^{*}) - \bgg(\xx^{*}) \Big),
\end{eqnarray*}
where, for the last inequality, we used the fact that, by first order optimality condition, we have $\Delta_{k+1}^{T} \nabla F(\xx^{*}) \geq 0$. Using mean value theorem, we get
\begin{equation*}
\bgg(\xx^{(k)}) - \bgg(\xx^{*}) = \int_{0}^{1} J_{\bgg}(\xx^{*} + t (\xx^{(k)} - \xx^{*})) d t (\xx^{(k)} - \xx^{*}).
\end{equation*}
So it follows that
\begin{eqnarray*}
\Delta_{k+1}^{T} H(\xx^{(k)}) \Delta_{k+1} &\leq&  \Delta_{k+1}^{T}  \Big( \int_{0}^{1} H(\xx^{(k)}) - J_{\bgg}\left(\xx^{*} + t (\xx^{(k)} - \xx^{*})\right)   dt \Big) \Delta_{k} \\
&& + \Delta_{k+1}^{T} \Big( \nabla F(\xx^{*}) - \bgg(\xx^{*}) \Big) \\
&\leq& \int_{0}^{1} \| H(\xx^{(k)}) - J_{\bgg}\left(\xx^{*} + t (\xx^{(k)} - \xx^{*})\right) \|_{\mathcal{K}} dt \|\Delta_{k+1}\|\|\Delta_{k}\| \\
&& +  \|\nabla F(\xx^{*}) - \bgg(\xx^{*})\|_{\mathcal{K}} \|\Delta_{k+1}\| \\
&\leq& T \|\Delta_{k+1}\|\|\Delta_{k}\|^{2} \int_{0}^{1} (1-t) dt +  \|\nabla F(\xx^{*}) - \bgg(\xx^{*})\|_{\mathcal{K}} \|\Delta_{k+1}\|\\
&=& \frac{T}{2} \|\Delta_{k+1}\|\|\Delta_{k}\|^{2} +  \|\nabla F(\xx^{*}) - \bgg(\xx^{*})\|_{\mathcal{K}} \|\Delta_{k+1}\|.
\end{eqnarray*}
Now the same reasoning as in the end of the proof of Lemma~\ref{structural_lemma_eig_relax} gives the result. \qed
\end{proof}

\subsubsection{Main proofs}

\begin{proof}[Proof of Theorem~\ref{uniform_newton_convergence_grad}]
The results are immediately obtained using Lemmas~\ref{structural_lemma_grad} and~\ref{structural_lemma_grad_relaxed}. \qed
\end{proof}

\begin{proof}[Proof of Theorem~\ref{uniform_newton_convergence_grad_relax_2}]
First note that since $H(\xx^{(k)})$ and $\bgg(\xx^{(k)})$ are not independent, the joint probability that they are within the desired tolerance, by sub-additivity of probability, is lower bounded by $1 - 2\delta$. Now by construction, $J_{\bgg}(\yy) = H(\yy)$, and the result follows by Assumption~\eqref{F_Lip} on Lipschitz continuity of $\nabla^{2}f_{i}(\xx)$ and Lemma~\ref{structural_lemma_grad_relaxed_2} with $T = L$. \qed
\end{proof}

\begin{proof}[Proof of Theorem~\ref{uniform_newton_grad_sufficient_cond}]
Using Theorem~\ref{uniform_newton_convergence_grad}, the particular choice of $\epsilon_{1}$ and $\epsilon^{(k)}_{2} = \rho^{k} \epsilon_{2}$, for each $k$, gives
\begin{equation*}
\|\xx^{(k+1)} - \xx^{*}\| \leq \eta^{(k)} + \rho_{0} \|\xx^{(k)} - \xx^{*}\| + \xi \|\xx^{(k)} - \xx^{*}\|^{2},
\end{equation*}
where $\eta^{(k)} = \rho \eta^{(k-1)}$ and $\eta^{(0)} \leq \rho_{1} \sigma$. We prove the result by induction on $k$. Define $\Delta_{k} \defeq \xx^{(k)} - \xx^{*}$. For $k=0$, using Assumption~\eqref{initial_cond_grad} and noting that by the definition of $\sigma$
\begin{equation*}
\sigma = \frac{\rho - (\rho_{0} + \rho_{1})}{\xi},
\end{equation*}
we have 
\begin{equation*}
\|\Delta_{1}\| \leq \eta^{(0)}  + \rho_{0} \|\Delta_{0}\| + \xi \|\Delta_{0}\|^{2} \leq \rho_{1} \sigma + \rho_{0} \sigma + \xi \sigma^{2} = \rho \sigma.
\end{equation*}
Now assume that~\eqref{lin_grad} holds for $k$. For $k+1$, we get
\begin{eqnarray*}
\|\Delta_{k+1}\| &\leq& \eta^{(k)} + \rho_{0} \|\Delta_{k}\| + \xi \|\Delta_{k}\|^{2}\\
&=& \rho^{k} \eta^{(0)} + \rho_{0} \|\Delta_{k}\| + \xi \|\Delta_{k}\|^{2}\\
&\leq& \rho^{k} \rho_{1} \sigma + \rho_{0} \rho^{k} \sigma + \xi \rho^{2k} \sigma^{2} \quad \quad \text{(induction hypothesis)}\\
&=& \rho^{k} \left( \rho_{1} \sigma + \rho_{0} \sigma + \xi \rho^{k}  \sigma^{2} \right)\\
&\leq& \rho^{k} \left( \rho_{1} \sigma + \rho_{0} \sigma + \xi \sigma^{2} \right)  \quad \quad \text{(since $\rho < 1$)}\\
&=& \rho^{k+1} \sigma \quad \quad \text{(definition of $\sigma$)}.
\end{eqnarray*}
Under the local regularity assumptions~\eqref{strong_convex_boundedness_opt}, we can see by induction that for every $k$ 
\begin{equation*}
\|\xx^{(k)} - \xx^{*}\| \leq \sigma = \frac{\rho-(\rho_{0}+\rho_{1})}{\xi} \leq \frac{\gamma^{*} \left(1-\epsilon_{1}\right)}{2 L},
\end{equation*}
so the condition~\eqref{initial_cond_spec} is satisfied. The overall success probability is computed as in the end of the proof of Theorem~\ref{uniform_newton_sufficient_cond}.\qed
\end{proof}

\begin{proof}[Proof of Theorem~\ref{uniform_newton_grad_sufficient_cond_relax_2}]
First note that by~\eqref{epsilon_cond} and~\eqref{initial_cond_grad_2}, we get that
\begin{equation*}
\frac{\epsilon}{1-\epsilon} \leq \frac{\gamma^{*} \rho_{0} \sigma}{2}.
\end{equation*}
The proof is again by induction using Theorem~\ref{uniform_newton_convergence_grad_relax_2}. First note that, by construction and the value of $\epsilon$, 
\begin{equation*}
\eta^{(k)} = \frac{2 \epsilon^{(k)}}{\big(1-\epsilon^{(k)}\big) \gamma^{*} } = \frac{2 \rho^{k} \epsilon}{\big(1-\rho^{k} \epsilon\big) \gamma^{*} } \leq \frac{2 \rho^{k} \epsilon}{\big(1-\epsilon\big) \gamma^{*} } \leq \rho^{k} \rho_{0} \sigma,
\end{equation*}
and $\xi^{(k)} \leq \xi^{(k-1)}$. For $k=0$, we have
\begin{equation*}
\|\Delta_{1}\| \leq \eta^{(0)}  + \xi^{(0)} \|\Delta_{0}\|^{2} \leq \rho_{0} \sigma + \xi^{(0)} \sigma^{2} \leq \rho \sigma, 
\end{equation*}
where the last inequality follows from the definition~\eqref{initial_cond_grad_2} and noting that
$$\sigma = \frac{\rho - \rho_{0}}{2\xi^{(0)}}.$$

Now assume that~\eqref{lin_grad} holds for $k$. For $k+1$, we get
\begin{eqnarray*}
\|\Delta_{k+1}\| &\leq& \eta^{(k)} + \xi^{(k)} \|\Delta_{k}\|^{2}\\
&\leq& \rho^{k} \rho_{0} \sigma + \xi^{(0)} \rho^{2k} \sigma^{2} \quad \quad \text{(induction hypothesis)}\\
&=& \rho^{k} \left( \rho_{0} \sigma + \xi^{(0)} \rho^{k}  \sigma^{2} \right)\\
&\leq& \rho^{k} \left( \rho_{0} \sigma + \xi^{(0)}   \sigma^{2} \right) \quad \quad \text{(since $\rho < 1$)}\\
&\leq& \rho^{k+1} \sigma \quad \quad \text{(base case for $k=0$)}.
\end{eqnarray*}

Under the local regularity assumption~\eqref{F_strong_opt}, we can see by induction that for every $k$ 
\begin{equation*}
\|\xx^{(k)} - \xx^{*}\| \leq \sigma = \frac{\rho-\rho_{0}}{2\xi^{(0)}} \leq \frac{\gamma^{*} \left(1-\epsilon\right)}{2L},
\end{equation*}
so the condition~\eqref{initial_cond_spec} is satisfied. The overall success probability is computed as in the end of the proof of Theorem~\ref{uniform_newton_sufficient_cond}.
\qed
\end{proof}

\begin{proof}[Proof of Theorem~\ref{uniform_newton_grad_sufficient_cond_relax_3}]
First, as in the proof of Theorem~\ref{uniform_newton_grad_sufficient_cond_relax_2}, by~\eqref{epsilon_cond} and~\eqref{initial_cond_grad_2} , we get
\begin{equation*}
\frac{\epsilon}{1-\epsilon} \leq \frac{\gamma^{*} \rho_{0} \sigma}{2}.
\end{equation*}
Now we note that 
$\eta^{(0)} \leq \rho_{0} \sigma$, $\eta^{(k)} \leq \rho^{k} \eta^{(k-1)}$, $\xi^{(k)} \leq \xi^{(k-1)}$ and 
\begin{equation}
\tau^{(k)} = \rho^{k-1} \tau^{(k-1)} = \cdots = \rho^{k-1} \rho^{(k-2)} \cdots \rho^{1} \tau^{(1)} = \rho \prod_{i=1}^{k-1} \rho^{i}.
\label{tau_rate}
\end{equation}
Again, the result is obtained by induction on $k$. Define $\Delta_{k} \defeq \xx^{(k)} - \xx^{*}$. For the base case of $k=0$, we have 
\begin{eqnarray*}
\|\Delta_{1}\| &\leq& \eta^{(0)} + \xi^{(0)} \|\Delta_{0}\|^{2} \quad \text{(Theorem~\ref{uniform_newton_convergence_grad_relax_2} with the current parameters)} \\
&\leq& \rho_{0} \sigma + \xi^{(0)} \sigma^{2} \leq \rho \sigma = \tau^{(1)} \sigma,
\end{eqnarray*}
where for the inequality, we used the definition of $\sigma$ and noting that $$\sigma = \frac{\rho - \rho_{0}}{2\xi^{(0)}}.$$ Now assume that~\eqref{sup_lin_grad} holds for $k$. For $k+1$, we have
\begin{eqnarray*}
\|\Delta_{k+1}\| &\leq& \eta^{(k)}  + \xi^{(k)} \|\Delta_{k}\|^{2}  \quad \text{(Theorem~\ref{uniform_newton_convergence_grad_relax_2})} \\
&\leq& \rho^{k} \eta^{(k-1)} + \xi^{(k)} \|\Delta_{k}\|^{2} \\
&\leq& \rho^{k} \eta^{(k-1)} + \xi^{(0)} (\tau^{(k)})^{2} \sigma^{2} \quad \text{(since $\xi^{(k)} \leq \xi^{(0)}$ and inductive hypothesis)}\\
&\leq& \left(\prod_{i=1}^{k} \rho^{i}\right) \rho_{0}\sigma  + \left(\prod_{i=1}^{k-1} \rho^{ i }\right)^{2} \rho^{2} \xi^{(0)}  \sigma^{2} \quad \text{(definition of $\eta^{(k)}$ and~\eqref{tau_rate})}\\
&=& \left(\prod_{i=1}^{k} \rho^{i}\right) \left( \rho_{0}\sigma + \left(\prod_{i=1}^{k} \rho^{ i }\right) \frac{\rho^{2}}{\rho^{2k}}\xi^{(0)} \sigma^{2} \right) \\
&=& \left(\prod_{i=1}^{k} \rho^{i}\right) \left( \rho_{0}\sigma + \rho^{ \frac{k^{2} - 3k + 4}{2}  } \xi^{(0)} \sigma^{2} \right)\\
&\leq& \left(\prod_{i=1}^{k} \rho^{i}\right) \left( \rho_{0}\sigma + \xi^{(0)} \sigma^{2} \right)\\
&\leq& \left(\prod_{i=1}^{k} \rho^{i}\right) \rho \sigma\\
&\leq& \rho^{k} \left(\rho \prod_{i=1}^{k-1} \rho^{i}\right) \sigma\\
&\leq& \rho^{k} \tau^{(k)} \sigma\quad \text{(by~\eqref{tau_rate})} \\
&=& \tau ^{(k+1)} \sigma.
\end{eqnarray*}
\qed
\end{proof}

\end{document}